\title{FI-modules over Noetherian rings}
\author{Thomas Church, Jordan S. Ellenberg, Benson Farb, and Rohit Nagpal \thanks{The first, second, and third authors gratefully acknowledge support from the National Science Foundation. The second author's work was partially supported by a Romnes Faculty Fellowship.}}
\theoremstyle{plain}
\newtheorem{theorem}{Theorem}[section]
\newtheorem{maintheorem}{Theorem}
\newtheorem*{theoremC}{Theorem~\ref{thm:inductive}}
\newtheorem{proposition}[theorem]{Proposition}
\newtheorem{lemma}[theorem]{Lemma}
\newtheorem{corollary}[theorem]{Corollary}
\theoremstyle{definition}
\newtheorem{remark}[theorem]{Remark}
\newtheorem{definition}[theorem]{Definition}
\newcommand{\nc}{\newcommand}
\nc{\dmo}{\DeclareMathOperator}
\nc{\FF}{\mathbf{F}}
\nc{\Q}{\mathbb{Q}}
\nc{\R}{\mathbb{R}}
\nc{\Z}{\mathbb{Z}}
\nc{\C}{\mathbb{C}}
\nc{\N}{\mathbb{N}}
\nc{\CC}{\mathbf{C}}
\renewcommand{\O}{\mathcal{O}}
\nc\p{\mathfrak{p}}
\nc\XX{\mathbf{X}}
\renewcommand{\epsilon}{\varepsilon}
\nc\X{X}
\nc\Y{Y}
\dmo\FI{FI}
\dmo\FIMod{FI-Mod}
\dmo\dMod{-Mod}
\nc\RMod{R\dMod}
\dmo\Sets{Sets}
\dmo\Bij{Bij}
\dmo{\GL}{GL}
\dmo{\Aut}{Aut}
\dmo{\Stab}{Stab}
\dmo\SL{SL}
\renewcommand{\sl}{\mathfrak{sl}}
\DeclareMathOperator*{\colim}{colim}
\dmo\im{im}
\dmo\id{id}
\dmo\fd{fd}
\dmo\Sym{Sym}
\dmo\End{End}
\dmo\Conf{Conf}
\dmo\op{op}
\dmo\coker{coker}
\dmo\Inj{Inj}
\dmo\Map{Map}
\dmo\chr{char}
\dmo\Hom{Hom}
\dmo\Ext{Ext}
\dmo\spn{span}
\dmo\Ind{Ind}
\dmo\Res{Res}
\nc{\set}[1]{\{#1\}}
\nc{\beq}{\begin{displaymath}}
\nc{\eeq}{\end{displaymath}}
\nc{\beqn}{\begin{equation}}
\nc{\eeqn}{\end{equation}}
\nc{\tensor}{\otimes}
\renewcommand{\approxeq}{\sim}
\def\HH{\mathcal{H}}
\nc{\tSminus}[1]{\widetilde{S}_{-#1}}
\nc{\Xia}{\Xi_a}
\nc{\disjoint}{\sqcup}
\nc{\astT}[1]{\ensuremath{\ast_{#1}}}
\nc{\ra}{\rightarrow}
\nc{\inj}{\hookrightarrow}
\nc{\surj}{\twoheadrightarrow}
\nc{\bwedge}{\textstyle{\bigwedge}}
\nc{\abs}[1]{\left\lvert#1\right\rvert}
\nc{\coloneq}{\mathrel{\mathop:}\mkern-1.2mu=}
\nc{\margin}[1]{\marginpar{\scriptsize #1}}
\nc{\para}[1]{\medskip\noindent\textbf{#1.}}
\nc{\arXiv}[1]{\href{http://arxiv.org/abs/#1}{\nolinkurl{arXiv:#1}}}
\nc{\arXivV}[2]{\href{http://arxiv.org/abs/#1}{\nolinkurl{arXiv:#1v#2}}}
\nc{\myemail}[1]{\href{mailto:#1}{\nolinkurl{#1}}}
\begin{document}

\maketitle
\begin{abstract}
FI-modules were introduced by the first three authors in \cite{CEF} to encode sequences of representations of symmetric groups. Over a field of characteristic 0, finite generation of an FI-module implies representation stability for the corresponding sequence of $S_n$-representations.
In this paper we prove the Noetherian property for FI-modules over arbitrary Noetherian rings: any sub-FI-module of a finitely-generated FI-module is finitely generated. This lets us extend many of the results of \cite{CEF} to representations in positive characteristic, and even to integral coefficients. We focus on three major applications of the main theorem: on the integral and mod $p$ cohomology of configuration spaces; on diagonal coinvariant algebras in positive characteristic; and on an integral version of Putman's central stability for homology of congruence subgroups.
\end{abstract}

\section{Introduction}

In \cite{CEF}, the first three authors investigated the theory of {\em FI-modules}, which encode sequences of representations of symmetric groups connected by families of linear maps.  The category of FI-modules defined in \cite{CEF} admits a natural notion of {\em finite generation}, which is central to the story told there.  In particular, finitely-generated FI-modules over a field of characteristic $0$ correspond to sequences of representations whose dimensions and  characters behave ``eventually polynomially."  This turns out to be essentially equivalent to the phenomenon that was called ``representation stability" in the earlier work of the first and third authors~\cite{CF}.

In much of \cite{CEF} it was critical that we consider FI-modules over a field of characteristic 0. Most notably, this was used in the proof there that the category of FI-modules over a field of characteristic 0 is {\em Noetherian}; that is, any sub-FI-module of a finitely-generated FI-module is again finitely generated.  This property is essential for many of the applications in \cite{CEF}.  The main purpose of the present paper is to prove the Noetherian property for FI-modules over \emph{arbitrary Noetherian rings} $R$.

This allows us to generalize many of the applications in \cite{CEF} beyond the case of fields of characteristic $0$, and to produce new applications as well.  We discuss three such results in this paper:
\begin{itemize}
\item We prove new theorems about the integral and mod $p$ cohomology of configuration spaces on manifolds, generalizing results in \S 4 of \cite{CEF};
\item We characterize the dimensions of diagonal coinvariant algebras over fields of positive characteristic, generalizing results in \S 3.2 of \cite{CEF};
\item We prove a complement to a recent theorem of Putman \cite{P} on the homology groups of congruence subgroups. Putman shows that the mod $p$ homology of these subgroups satisfies a version of representation stability, with an explicit stable range, for all primes $p$ above a certain explicit threshold.  We prove a similar theorem, which does not provide an explicit range, but which 
holds for coefficients of any characteristic, even when the coefficients are not a field.
\end{itemize}

\subsection{The Noetherian property}
Let $\FI$ be the category whose objects are finite sets and whose morphisms are injections. The category $\FI$ is equivalent to its full subcategory whose objects are the sets $\{1,\ldots, n\}$ as $n$ ranges over natural numbers $n\geq 0$.  For simplicity we denote $\{1,\ldots, n\}$ by $[n]$ hereafter, with $[0]\coloneq \emptyset$.

Let $R$ be a commutative ring.\footnote{The restriction to commutative rings is probably not essential;  see for instance the discussion  of $\FI[G]$-modules by Jimenez Rolland in \cite{RJR}.} An \emph{FI-module over $R$} is a covariant functor $V$ from $\FI$ to the category of $R$-modules.  Given a finite set $S$ we denote the $R$-module $V(S)$ by $V_S$, and in particular we denote $V([n])$ by $V_n$.  Since $\End_{\FI}([n])=S_n$, any FI-module $V$ determines for each $n\geq 0$ an $S_n$-representation $V_n$ (that is, an $R[S_n]$-module). Moreover, the FI-module $V$ determines linear maps $V_m\to V_n$ corresponding to the injections $[m]\inj [n]$.  The FI-module structure imposes no maps from $V_m$ to $V_n$ when $n < m$. The usual notions from the theory of modules, such as submodule and quotient module, carry over to FI-modules. 

The applications in this paper are all based on the notion of finite generation of an FI-module.  An FI-module $V$ is \emph{finitely generated} if there is a finite set $S$ of elements in $\coprod_iV_i$ so that no proper sub-FI-module of $V$ contains $S$.  This condition was put to much use in \cite{CEF}; in particular, over a field of characteristic 0, finite generation of $V$ implies representation stability in the sense of \cite{CF} for the sequence $\{V_n\}$ of $S_n$-representations.

This paper has three main results; all three are proved in \S\ref{section:noetherian} below. 
When $k$ is a field of characteristic $0$, Theorem~\ref{thm:noetherian} was proved earlier in \cite[Theorem~2.3]{Snowden} and \cite[Theorem~2.60]{CEF}, and Theorem~\ref{thm:polynomial} was proved in \cite[Theorem~2.67]{CEF}.

\begin{maintheorem}[{\bf Noetherian property}]
\label{thm:noetherian}
If $V$ is a finitely-generated FI-module over a Noetherian ring $R$, and $W$ is a sub-FI-module of $V$, then $W$ is finitely generated.
\end{maintheorem}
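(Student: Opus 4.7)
The plan is to reduce the theorem to showing that the principal (free) FI-modules $M(m)$ (with $M(m)_S \coloneq R[\Hom_{\FI}([m], S)]$) are Noetherian, and then prove that by passing to an auxiliary category of ordered injections and running a Gr\"obner-basis argument there. First I would note that any finitely generated FI-module $V$ is a quotient of some finite direct sum $\bigoplus_{i=1}^k M(m_i)$ (sending the free generator of $M(m_i)$ to a generator of $V$ in degree $m_i$), and that the class of Noetherian FI-modules is closed under quotients and finite direct sums by the usual formal argument. So it suffices to prove that each $M(m)$ is a Noetherian FI-module.

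Next I would pass to the auxiliary category $\mathrm{OI}$ of finite totally ordered sets with order-preserving injections, with corresponding principal OI-module $M^{\mathrm{OI}}(m)_S \coloneq R[\Hom_{\mathrm{OI}}([m], S)]$. Every injection $[m] \inj [n]$ factors uniquely as a permutation of $[m]$ followed by an order-preserving injection, so under restriction along the inclusion $\mathrm{OI} \hookrightarrow \FI$ the FI-module $M(m)$ becomes a direct sum of $m!$ copies of $M^{\mathrm{OI}}(m)$ as an OI-module. Any sub-FI-module of $M(m)$ is in particular a sub-OI-module, and an OI-generating set is \emph{a fortiori} an FI-generating set, so it is enough to show that each $M^{\mathrm{OI}}(m)$ is a Noetherian OI-module.

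The substantive step is a Gr\"obner-basis argument on $\mathrm{OI}$. I would fix a well-ordering on the disjoint union $\coprod_n \Hom_{\mathrm{OI}}([m], [n])$ (for instance, increasing $n$, broken by lex on image subsets), so that every nonzero element of $M^{\mathrm{OI}}(m)_n$ has a well-defined leading OI-morphism with an $R$-coefficient. Given an ascending chain of sub-OI-modules of $M^{\mathrm{OI}}(m)$, I would form the corresponding chain of leading-term data. The decisive input is a Higman-style well-quasi-order statement: order-preserving injections $[m] \inj [n]$ are well-quasi-ordered under the natural divisibility (i.e.\ factorization through an OI-morphism), so any infinite sequence of leading monomials contains one dividing a later one. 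Combining this with the Noetherian hypothesis on $R$ to handle the coefficients forces the original chain of sub-OI-modules to stabilize, which proves Noetherianness. The principal obstacle is precisely this Gr\"obner step: one must pick a total order for which a Higman-type well-quasi-order lemma genuinely applies, and then translate control of leading monomials into control of the sub-OI-modules themselves despite the extra $R$-coefficient data. The earlier reductions (to the free case and from FI to OI) are essentially formal.
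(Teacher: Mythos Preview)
Your proposal is correct and follows a genuinely different route from the paper's own proof. What you outline is essentially the Gr\"obner/well-quasi-order method later systematized by Sam and Snowden: pass from $\FI$ to the auxiliary category $\mathrm{OI}$ of ordered injections, observe that $M(m)|_{\mathrm{OI}} \cong M^{\mathrm{OI}}(m)^{\oplus m!}$, and then prove $M^{\mathrm{OI}}(m)$ is Noetherian by a leading-term argument. The wqo step you need is Dickson's lemma: encoding an OI-morphism $[m]\inj[n]$ by its gap vector in $\N^{m+1}$, divisibility becomes the componentwise order on $\N^{m+1}$. With a compatible monomial order (your ``degree then lex'' works, since post-composition by $h\in\mathrm{OI}$ preserves lex order), the passage from leading-term data back to the submodule is the standard Gr\"obner argument, with the Noetherian hypothesis on $R$ controlling the coefficient ideals.

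The paper does something quite different. It never leaves $\FI$ and never invokes Higman/Dickson; instead it inducts on the generation degree $d$ using the shift functors $S_{+a}$. The key structural fact (Proposition~\ref{pr:SaMd}) is the splitting $S_{+a}M(d)\cong M(d)\oplus Q_a$ with $Q_a$ free and generated in degree $\leq d-1$. For $W\subset M(d)$ one projects $S_{+a}W$ along this splitting to get submodules $W^a\subset M(d)$, shows $W^a$ is increasing in $a$ with union $W^\infty$ generated in degree $d$, and then uses only that $R[S_d]$ is a Noetherian $R$-module to force $W^N=W^\infty$ for some $N$. Your approach is more portable---the same template proves Noetherianity for many other combinatorial categories---and arguably cleaner once one has the wqo machinery. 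The paper's approach is more self-contained (no Higman-type lemma, only elementary Noetherian ring theory) and, more importantly for the paper's purposes, develops the shift functors $S_{+a}$ as a tool in their own right; these are reused heavily in the proofs of Theorems~\ref{thm:polynomial} and~\ref{thm:inductive}.
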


\begin{maintheorem}[{\bf Polynomial dimension}]
\label{thm:polynomial}
Let $k$ be any field, and let $V$ be a finitely-generated FI-module over $k$.  Then there exists an integer-valued polynomial $P(T)\in \Q[T]$ so that for all sufficiently large $n$, \[\dim_k V_n = P(n).\]
\end{maintheorem}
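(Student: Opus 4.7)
The plan is to induct on the generation degree $d$ of $V$, using Theorem~\ref{thm:noetherian} as the crucial input. Write $M(m) \coloneq k[\Inj([m],-)]$ for the free FI-module on one generator in degree $m$; then $\dim_k M(m)_n = n(n-1)\cdots(n-m+1)$ is already a polynomial in $n$. The driver of the induction is the \emph{shift functor} $\Sigma$ defined by $(\Sigma V)_S \coloneq V_{S \sqcup \set{*}}$, together with the natural map $V \to \Sigma V$ induced by $S \inj S \sqcup \set{*}$. Writing $K$ and $\Delta V$ for its kernel and cokernel, the four-term exact sequence
\beq
0 \to K \to V \to \Sigma V \to \Delta V \to 0
\eeq
translates into $\dim_k V_{n+1} - \dim_k V_n = \dim_k(\Delta V)_n - \dim_k K_n$.

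A direct count gives $\Delta M(m) \cong M(m-1)^{\oplus m}$: an injection $[m] \inj [n+1]$ lies in the image from $M(m)_n$ iff it avoids $n+1$, and otherwise is determined by the preimage $f^{-1}(n+1) \in [m]$ and the induced injection into $[n]$. Since $\Delta$ preserves surjections, it sends any FI-module generated in degree $\leq d$ to one generated in degree $\leq d-1$. The base case $d = 0$ is elementary: $V$ is then a quotient of the constant FI-module $M(0)^{\oplus r} = k^r$, and the FI-structure forces an ascending chain of subspaces of $k^r$ which stabilizes, so $\dim_k V_n$ is eventually constant.

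For the inductive step with $V$ generated in degree $\leq d$, Theorem~\ref{thm:noetherian} makes $K$ and $\Delta V$ finitely generated. Applying the induction hypothesis to $\Delta V$ yields an integer-valued polynomial agreeing with $\dim_k(\Delta V)_n$ for large $n$, so it remains to show $K_n = 0$ for large $n$. The key observation is that any injection $f:[m] \inj [m+1]$ factors as $f = \tau \circ \iota_m$ for some $\tau \in S_{m+1}$, where $\iota_m$ is the standard inclusion; hence for $v \in K_m$ we have $f_*(v) = \tau_*(\iota_m)_*(v) = 0$. Since every injection $[m] \inj [n]$ with $n \geq m+1$ factors as $[m] \inj [m+1] \inj [n]$, the sub-FI-module generated by any $v \in K_m$ vanishes in degrees $> m$. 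Choosing finitely many generators $w_1, \dots, w_r$ of $K$ with $w_i \in K_{m_i}$ (possible by Theorem~\ref{thm:noetherian}), the conclusion $K_n = 0$ for $n > \max_i m_i$ follows.

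Consequently $\dim_k V_{n+1} - \dim_k V_n = \dim_k(\Delta V)_n$ for all sufficiently large $n$, and since the antidifference of a sequence eventually given by an integer-valued polynomial is again eventually given by an integer-valued polynomial, $\dim_k V_n$ does as well. The main structural point is the identification $\Delta M(m) \cong M(m-1)^{\oplus m}$ driving the degree reduction, together with the use of Theorem~\ref{thm:noetherian} both to ensure $\Delta V$ and $K$ are finitely generated and to force $K$ to be concentrated in a bounded range of degrees; the obstacle to watch is that without the Noetherian property one has no control over $K$, so the whole induction rests on the kernel-vanishing argument in the previous paragraph.
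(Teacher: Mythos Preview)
Your proof is correct and follows essentially the same approach as the paper's: induct on the generation degree, use the shift functor $\Sigma=S_{+1}$ and the identification $\Delta M(m)\cong M(m-1)^{\oplus m}$ (the $a=1$ case of the paper's Proposition~\ref{pr:SaMd}) to drop the degree, and invoke Theorem~\ref{thm:noetherian} to control the kernel. The only cosmetic difference is that the paper first passes to the torsion-free quotient $V/T(V)$ so that $V\to S_{+1}V$ is injective, whereas you keep the kernel $K$ in the picture and argue directly that $K_n=0$ for large $n$; your argument here is exactly the content of the paper's Lemma~\ref{lem:torsion} specialized to $K\subset T(V)$.
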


\begin{maintheorem}[{\bf Inductive description}]
\label{thm:inductive}
Let $V$ be a finitely-generated FI-module over a Noetherian ring $R$. Then there exists some $N\geq 0$ such that for all $n\in \N$:
\begin{equation}
\label{eq:colim}
V_n=\ \colim_{\substack{S\subset [n]\\\abs{S}\leq N}}\  V_S
\end{equation}
\end{maintheorem}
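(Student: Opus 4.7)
The plan is to invoke Theorem~\ref{thm:noetherian} to produce a finite presentation of $V$ by representable (``free'') FI-modules, verify the statement for such representables by direct inspection, and then combine via right-exactness of colimits.

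I would first use finite generation together with Theorem~\ref{thm:noetherian} to obtain a finite presentation
\[ A \to B \to V \to 0 \]
of FI-modules in which $A = \bigoplus_{j} M(d_j)$ and $B = \bigoplus_{i} M(m_i)$ are finite direct sums of representable FI-modules $M(m)_T := R[\Inj([m],T)]$. Finite generation yields the surjection $B \surj V$; the Noetherian property then implies that its kernel is finitely generated and hence a quotient of such an $A$. I would set $N := \max_{i,j}(m_i,d_j)$ and prove this $N$ does the job.

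Next comes the free case. For $V = M(m)$ with $m \le N$, the target $M(m)_n = R[\Inj([m],[n])]$ has an $R$-basis indexed by injections $f : [m] \inj [n]$, each of which factors canonically as $[m] \surj \im(f) \inj [n]$ with $|\im(f)| = m \le N$. Because each $M(m)_S$ is the free $R$-module on $\Inj([m], S)$ and the transition maps come from underlying set maps, the colimit in $R$-modules agrees with the free $R$-module on the colimit of the underlying sets. The canonical factorization through the image readily identifies this set-colimit with $\Inj([m], [n])$, yielding $\colim_{|S| \le N} M(m)_S \cong M(m)_n$. Commuting colimits with finite direct sums then gives $\colim_{|S|\le N} B_S \cong B_n$ and analogously $\colim_{|S|\le N} A_S \cong A_n$.

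Finally, I would assemble the argument using right-exactness of colimits. Applying $\colim_{|S|\le N}$ to the right exact sequence $A \to B \to V \to 0$ produces the exact sequence
\[ A_n \to B_n \to \colim_{|S|\le N} V_S \to 0, \]
which, compared with $A_n \to B_n \to V_n \to 0$, forces the natural map $\colim_{|S|\le N} V_S \to V_n$ to be an isomorphism. The main delicate point is the free case: the indexing poset of subsets of $[n]$ of size $\le N$ is not filtered (two such subsets need not admit a common upper bound of size $\le N$), so standard results on filtered colimits of free modules do not apply directly; the argument instead depends crucially on the well-definedness of the image of an injection, which pins down a canonical representative lying in a subset of size exactly~$m$.
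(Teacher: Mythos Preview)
Your argument is correct and is genuinely different from the paper's proof. You proceed by choosing a finite presentation $A\to B\to V\to 0$ with $A,B$ free (using Theorem~\ref{thm:noetherian}), verifying \eqref{eq:colim} directly for $M(m)$ with $m\le N$ via the canonical factorization through the image, and then invoking right-exactness of colimits. Each step checks out: the free functor $R[-]$ is a left adjoint and so commutes with the set-colimit; the set-colimit $\colim_{|S|\le N}\Inj([m],S)$ is identified with $\Inj([m],[n])$ exactly because every injection is equivalent in the colimit to its co-restriction onto its image, a subset of size $m\le N$; and the final comparison of the two right-exact sequences with identical first two terms forces the isomorphism on cokernels.

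The paper instead builds the complex $\tSminus{*}V$, identifies $H_0(\tSminus{*}V)_T$ and $H_1(\tSminus{*}V)_T$ with the cokernel and kernel of $\colim_{S\subsetneq T}V_S\to V_T$ (Lemmas~\ref{lem:H0identify} and \ref{lem:H1identify}), proves via an explicit chain homotopy that $H_a(\tSminus{*}V)$ is torsion and hence $\approxeq 0$ (Proposition~\ref{prop:homologygraded}), and then inducts on $|T|$ using an iterated-colimit identity. Your route is shorter, more elementary, and yields an explicit bound: $N$ is the maximum degree appearing in any finite presentation of $V$, whereas the paper only obtains $N$ nonconstructively from $H_1(\tSminus{*}V)\approxeq 0$. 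What the paper's approach buys is reusable machinery: the vanishing $H_a(\tSminus{*}V)\approxeq 0$ for all $a$, not just $a=0,1$, is exactly what drives the spectral-sequence argument in the proof of Theorem~\ref{thm:congruencefg}, so that machinery is not wasted effort even though it is overkill for Theorem~\ref{thm:inductive} alone.
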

We emphasize that the colimit in \eqref{eq:colim} is taken over the \emph{poset} of subsets $S\subset [n]$ satisfying $\abs{S}\leq N$ under inclusion. In particular, the permutations do not play a role in defining the right side of \eqref{eq:colim}. However, $S_n$ does act naturally on the right side, and thus Theorem~\ref{thm:inductive} does determine $V_n$ as an $S_n$-representation.

The condition \eqref{eq:colim} in Theorem~\ref{thm:inductive} can be viewed as a reformulation of Putman's ``central stability'' condition \cite[\S1]{P}. One difference is that  we have formulated it as a global condition on the entire FI-module $V$, while Putman defines central stability as a local condition on the adjacent terms $V_{n-1},V_n,V_{n+1}$ separately for each $n$.
Nevertheless, the notions are equivalent.

\begin{remark} FI-modules were originally introduced in order to study various sequences $\{V_n\}$ of $S_n$-representations arising from algebra, combinatorics, and geometry, about which little explicit information is known.  For instance, one often lacks even a formula for the dimension of $V_n$.

The reason that Theorems~\ref{thm:noetherian}, \ref{thm:polynomial}, and \ref{thm:inductive} are so useful in practice is because many examples arise as sub-FI-modules of FI-modules which are readily seen to be finitely generated. In many cases we know nothing more about them except that they admit such an embedding. Nonetheless, Theorem~\ref{thm:noetherian} and Theorem~\ref{thm:polynomial} tell us their dimensions are eventually polynomial in $n$, and Theorem~\ref{thm:inductive} guarantees that they can be built up inductively from a finite amount of data. 
\end{remark}

\begin{remark} When $\chr k=0$, we proved in  \cite[Theorem~2.67]{CEF} that not only the dimensions but also the characters of $V_n$ are eventually polynomial. In the situation of Theorem~\ref{thm:polynomial}, it is reasonable to expect that when $k$ is a field of positive characteristic the Brauer characters of $V_n$ similarly have polynomial behavior. We do not pursue this question here.
\end{remark}

\begin{remark}
The analogue of Theorem~\ref{thm:noetherian} with $\FI$ replaced by a \emph{finite} category was proved by L\"{u}ck \cite[Lemma 16.10b]{Lueck}. However, his methods cannot be extended to infinite categories such as $\FI$.
\end{remark}

The category of FI-modules over a commutative ring $R$ naturally forms an abelian category \cite[\S2.1]{CEF}. As a consequence of Theorem~\ref{thm:noetherian}, the same is true if we restrict to finitely-generated FI-modules.
\begin{corollary}
If $R$ is a Noetherian ring, the category of finitely-generated FI-modules over $R$ is an abelian category.
\end{corollary}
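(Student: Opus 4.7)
The plan is to show that the full subcategory of finitely-generated FI-modules, viewed inside the ambient abelian category of all FI-modules over $R$ (which is abelian by \cite[\S2.1]{CEF}), is closed under the formation of kernels, cokernels, and finite direct sums. Once this closure is verified, the axioms making the ambient category abelian automatically restrict to the subcategory, so the subcategory inherits its abelian structure.

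The only nontrivial step is closure under kernels, and this is precisely where Theorem~\ref{thm:noetherian} enters. Concretely, given a morphism $f\colon V\to W$ of finitely-generated FI-modules over $R$, I would argue as follows. The cokernel $\coker(f)$ is a quotient FI-module of $W$, so if $S$ is a finite generating set for $W$, the image of $S$ in $\coker(f)$ generates it, establishing closure under cokernels. For the kernel, $\ker(f)$ is a sub-FI-module of the finitely-generated FI-module $V$, and since $R$ is Noetherian, Theorem~\ref{thm:noetherian} applies to give that $\ker(f)$ is finitely generated. In particular, the image $\im(f)$, being the kernel of $W\to \coker(f)$, is also finitely generated (alternatively, $\im(f)$ is a quotient of $V$).

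For finite direct sums, if $V$ and $W$ are finitely generated by sets $S_V\subset\coprod_i V_i$ and $S_W\subset\coprod_i W_i$ respectively, then the union of the images of $S_V$ and $S_W$ under the canonical inclusions into $V\oplus W$ is a finite generating set for $V\oplus W$. The zero FI-module is generated by the empty set, hence finitely generated.

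There is no serious obstacle: the entire content of the corollary is packaged into Theorem~\ref{thm:noetherian}, which supplies closure under subobjects. The remaining verifications (cokernels, direct sums, zero object) are formal consequences of the definition of finite generation.
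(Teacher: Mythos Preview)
Your proposal is correct and matches the paper's approach: the paper does not give a detailed proof, merely noting that the corollary follows ``as a consequence of Theorem~\ref{thm:noetherian}'' from the fact that all FI-modules form an abelian category. Your argument spells out exactly this consequence, correctly identifying closure under kernels as the only nontrivial step and invoking Theorem~\ref{thm:noetherian} precisely there.
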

When $R=\C$, this property has already been exploited in Sam--Snowden \cite{SS}, where the abelian category of finitely-generated FI-modules over $\C$ is studied extensively.

\subsection{Applications}

Theorems~\ref{thm:noetherian}, \ref{thm:polynomial}, and \ref{thm:inductive} can be applied to a variety of examples.  In this paper we concentrate on three important examples of FI-modules from algebra, topology and combinatorics. We will prove that each is a finitely-generated FI-module.

As a notational convention, we prepend ``FI'' to the name of a category to denote the category of functors from FI to that category; so an \emph{FI-group} is a functor from FI to the category of groups, an \emph{FI-simplicial complex} is a functor from FI to simplicial complexes, and so forth. Similarly, a \emph{co-FI-space} is a functor from $\FI^{\op}$ to the category of topological spaces, and so on.

\para{Application 1: Congruence subgroups}
Let $R$ be a commutative ring and let $\GL_n(R)$ be the group of automorphisms of $R^n$.  
We can regard $\GL_\bullet(R)$ as an FI-group, where an inclusion $f\colon [n]\hookrightarrow [m]$ induces the homomorphism $f_*\colon \GL_n(R)\to \GL_m(R)$ defined by
\begin{equation}
\label{eq:GLFIgroup}
(f_*M)_{ij}=\begin{cases}
M_{ab}&i=f(a),\ j=f(b) \\
\delta_{ij}&\{i,j\}\not\subset f([n])
\end{cases}
\end{equation}

For any ideal $\p\subset R$, the \emph{congruence subgroup} $\Gamma_n(\p)$ is the kernel of the natural reduction map $\GL_n(R)\to \GL_n(R/\p)$; in other words, $\Gamma_n(\p)$ consists of those invertible matrices that are congruent to the identity matrix modulo $\p$. The map \eqref{eq:GLFIgroup} satisfies $f_*(\Gamma_n(\p))\subset \Gamma_m(\p)$, so these congruence subgroups also define an FI-group $\Gamma_\bullet(\p)$. In particular, for any coefficient ring $A$ and any $m\geq 0$, the homology groups $H_m(\Gamma_n(\p);A)$ form an FI-module $\HH_m(\Gamma_\bullet(\p);A)$ over $A$.

It is known for a wide class of rings $R$ that $\GL_n(R)$ satisfies \emph{homological stability}; that is, $H_m(\GL_n(R);A)\approx H_m(\GL_{n+1}(R);A)$ for $n\gg m$. The corresponding statement is false for $\Gamma_n(\p)$, whose homology with certain coefficient modules grows as $n\to \infty$; this phenomenon is identified as accounting for the ``failure of excision in $K$-theory'' by Charney~\cite{Charney}. However, the striking results of Putman \cite{P} show that in many cases the FI-module $\HH_m(\Gamma_\bullet(\p);A)$ is nevertheless finitely generated. Our results on $\HH_m(\Gamma_\bullet(\p);A)$ complement, and were inspired by, the results of Putman in \cite{P}.

\begin{maintheorem}
\label{thm:congruencefg}
Let $K$ be a number field, let $\O_K$ be its ring of integers, and let $\p\subsetneq \O_K$ be a proper ideal. Fix $m\geq 0$ and a Noetherian ring $A$. Then the FI-module $\HH_m(\Gamma_\bullet(\p);A)$ is finitely generated.
\end{maintheorem}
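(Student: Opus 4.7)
The plan is to establish Theorem~\ref{thm:congruencefg} by induction on $m$, leveraging the Noetherian property (Theorem~\ref{thm:noetherian}) to push finite generation through a geometric spectral sequence. The base case $m=0$ is immediate, since $\HH_0(\Gamma_\bullet(\p);A)$ is the constant FI-module $A$, finitely generated by one element in degree $0$.

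For the inductive step, the strategy is to produce an FI-semisimplicial complex $\mathcal{X}_\bullet$ carrying a compatible action of the FI-group $\GL_\bullet(\O_K)$ (and hence of $\Gamma_\bullet(\p)$), such that $\mathcal{X}_n$ is highly connected for $n$ large; for our purposes, connectivity beyond $m$ is enough. A natural candidate is the complex of partial bases (ordered unimodular sequences) in $\O_K^n$, whose high connectivity for Dedekind rings is provided by results of Charney--van der Kallen type. Restricting the action to $\Gamma_n(\p)$ and running the isotropy spectral sequence yields
\[
E^1_{p,q} \;=\; \bigoplus_{\sigma \in \mathcal{X}_{n,p}/\Gamma_n(\p)} H_q(\Stab_{\Gamma_n(\p)}(\sigma); A) \;\Longrightarrow\; H_{p+q}(\Gamma_n(\p); A),
\]
which computes $H_m(\Gamma_n(\p);A)$ in the relevant range once $n$ is large.

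The stabilizer of a $p$-simplex is, up to an extension by an abelian group of the form $\p^{(p+1)(n-p-1)}$, a smaller congruence subgroup $\Gamma_{n-p-1}(\p)$. The crucial claim is that, for each fixed $p$ and each $q<m$, the terms $E^1_{p,q}$ assemble as $n$ varies into a finitely generated FI-module. The natural route is to combine Shapiro's lemma (reinterpreting the orbit sum as $H_q(\Gamma_n(\p); A[\mathcal{X}_{n,p}])$) with a Hochschild--Serre analysis of the stabilizer extension, then invoke the inductive hypothesis on $q$ to conclude that the relevant building blocks are finitely generated FI-modules in $n$. Once the $E^1$ page is finitely generated in the range of interest, the Noetherian property (Theorem~\ref{thm:noetherian}) transfers finite generation through every subsequent page and to the associated graded of $H_m(\Gamma_\bullet(\p);A)$, yielding the theorem.

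The hardest step will be verifying that these $E^1$ columns are finitely generated FI-modules. The stabilizers are not themselves congruence subgroups but semidirect products whose abelian piece $\p^{(p+1)(n-p-1)}$ has rank growing linearly in $n$, and the orbits of partial bases under $\Gamma_n(\p)$ are indexed by the (exponentially growing) set of mod-$\p$ reductions, so the naive orbit sum is \emph{not} a finitely generated FI-module. Overcoming this will require reorganizing the orbit sum via Shapiro's lemma into homology of $\Gamma_n(\p)$ with coefficients in the \emph{permutation} FI-module on partial bases (which is a finitely generated FI-module because its underlying FI-set of partial bases is), and then controlling the homology of the abelian kernel in the stabilizer by a Koszul-type calculation whose output is again a finitely generated FI-module, tracked through a nested induction on $(m,p)$.
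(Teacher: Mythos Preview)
Your inductive strategy via an equivariant isotropy spectral sequence is exactly the paper's, but the execution has a genuine gap, and the paper's fix is quite different from what you propose.

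The gap is in your last paragraph. The parenthetical claim that the FI-set of partial bases is finitely generated is false: for each $n$ there are infinitely many unimodular vectors in $\O_K^n$, and a generic one (say $(1,2,\ldots,n)$) lies in no proper coordinate subspace, hence is not in the image of any FI-morphism from a smaller index. So the permutation FI-module $A[\mathcal{X}_{\bullet,p}]$ is not finitely generated, and Shapiro's lemma alone does not rescue the $E^1$ column. Your Koszul suggestion for the abelian kernel $\p^{(p+1)(n-p-1)}$ is plausible in spirit but is not the real obstacle; the orbit growth is.

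The paper avoids all of this by choosing a different complex. Instead of partial bases (vectors only), it uses the $\Gamma_\bullet(\p)$-orbit of the simplex on the \emph{pairs} $(e_s,\lambda_s)\in M(1)_S\times M(1)_S^*$, i.e.\ Charney's split unimodular sequences. Two things follow. First, stabilizing both a basis vector and its dual functional forces the $s$-th row \emph{and} column of a matrix to be standard, so the stabilizer of a $p$-simplex is \emph{exactly} $\Gamma_{S\setminus U}(\p)$ with no abelian extension. Second, since $\p$ is proper, distinct simplices of the original simplex have distinct mod-$\p$ reductions, so that simplex is a strict fundamental domain and there is exactly one orbit per face. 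The $E^1$ page is then literally the complex $\tSminus{*}\HH_q$ of \S2.4, and Proposition~\ref{prop:homologygraded} gives $E^2_{p,q}\approxeq 0$ for $q<m$ by induction. This forces $H_0(\HH_m)\approxeq 0$, i.e.\ $\HH_m$ is generated in finite degree.

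Finally, you are missing one ingredient that the paper uses and your outline does not mention: generation in finite degree is upgraded to finite generation via Lemma~\ref{lem:cokerfg}.2, which requires knowing \emph{a priori} that each $H_m(\Gamma_n(\p);A)$ is a finitely generated $A$-module. This comes from the Borel--Serre compactification, and is precisely why the theorem is stated for number rings.
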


The following two theorems are immediate corollaries of Theorem~\ref{thm:congruencefg}, by applying Theorem~\ref{thm:polynomial} and Theorem~\ref{thm:inductive} respectively.
\begin{theorem}[{\bf Betti numbers of congruence subgroups}]
\label{thm:congruencepoly}
Let $K$ be a number field with ring of integers $\O_K$, and fix a proper ideal $\p\subsetneq \O_K$. For any $m\geq 0$ and any field $k$, there exists a polynomial $P(T)=P_{\p,m,k}(T)\in\Q[T]$ so that 
for all sufficiently large $n$, 
\[\dim_k H_m(\Gamma_n(\p);k) = P(n).\]
\end{theorem}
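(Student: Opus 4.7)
The plan is essentially a one-line deduction, and indeed the paper announces Theorem~\ref{thm:congruencepoly} as an immediate consequence of Theorem~\ref{thm:congruencefg} and Theorem~\ref{thm:polynomial}. First I would observe that since $k$ is a field, it is in particular a Noetherian ring, so Theorem~\ref{thm:congruencefg} applies with $A = k$ and tells us that the FI-module
\[
V \coloneq \HH_m(\Gamma_\bullet(\p);k)
\]
is finitely generated over $k$. By construction $V_n = H_m(\Gamma_n(\p);k)$, so the Betti numbers in question are exactly the dimensions of the terms $V_n$ of a finitely-generated FI-module over a field.

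Next I would invoke Theorem~\ref{thm:polynomial} applied to $V$: it produces an integer-valued polynomial $P(T)\in\Q[T]$, depending on $V$ (and hence on the data $\p$, $m$, $k$), such that $\dim_k V_n = P(n)$ for all sufficiently large $n$. Renaming $P$ as $P_{\p,m,k}$ gives the desired polynomial, completing the argument.

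No genuine obstacle arises at the level of this corollary; the real work is hidden in the two cited theorems. The combinatorial heart is Theorem~\ref{thm:polynomial}, which upgrades finite generation to eventual polynomial dimension, and the topological input is Theorem~\ref{thm:congruencefg}, whose proof presumably realizes $\HH_m(\Gamma_\bullet(\p);A)$ as a subquotient of a manifestly finitely-generated FI-module (for instance via a spectral sequence or a natural FI-cover of the classifying space) and then invokes the Noetherian property of Theorem~\ref{thm:noetherian}. Once those two inputs are in hand, the present theorem requires nothing more than to chain them together.
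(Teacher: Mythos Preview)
Your proposal is correct and matches the paper's approach exactly: the paper states explicitly that Theorem~\ref{thm:congruencepoly} is an immediate corollary of Theorem~\ref{thm:congruencefg} applied with $A=k$, followed by Theorem~\ref{thm:polynomial}. Nothing further is needed.
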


\begin{theorem}[{\bf An inductive description of $H_m(\Gamma_n(\p);\Z)$}]
\label{thm:congruenceinductive}
Let $K$ be a number field with ring of integers $\O_K$, and fix a proper ideal $\p\subsetneq \O_K$. For any $m\geq 0$, there exists $N=N_{\p,m}\geq 0$ such that for all $n$:
\[H_m(\Gamma_n(\p);\Z)=\, \colim_{\substack{S\subset [n]\\\abs{S}\leq N}}\, H_m(\Gamma_S(\p);\Z).\]
\end{theorem}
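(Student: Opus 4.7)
The plan is to deduce this theorem as a direct application of Theorem~\ref{thm:inductive} to the FI-module $\HH_m(\Gamma_\bullet(\p);\Z)$, whose finite generation will be supplied by Theorem~\ref{thm:congruencefg}. No new combinatorial or topological input is required beyond the framework already developed; the work lies in checking that the colimit statement produced by Theorem~\ref{thm:inductive} matches the one asserted here.

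First I would confirm that the homology groups $H_m(\Gamma_n(\p);\Z)$ do indeed assemble into an FI-module, namely the object $\HH_m(\Gamma_\bullet(\p);\Z)$ constructed in the paragraph preceding Theorem~\ref{thm:congruencefg}. The FI-group structure \eqref{eq:GLFIgroup} restricts to the congruence subgroups because $f_*(\Gamma_n(\p))\subset \Gamma_m(\p)$, and functoriality of group homology then provides, for every injection $S\inj T$ of finite sets, an induced $\Z$-linear map $H_m(\Gamma_S(\p);\Z)\to H_m(\Gamma_T(\p);\Z)$ satisfying the FI-module axioms. In particular the value of this FI-module on a finite set $S$ is exactly $H_m(\Gamma_S(\p);\Z)$.

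Next I would invoke Theorem~\ref{thm:congruencefg} with the Noetherian coefficient ring $A=\Z$: this says that $\HH_m(\Gamma_\bullet(\p);\Z)$ is a finitely generated FI-module over $\Z$. Because $\Z$ is Noetherian, Theorem~\ref{thm:inductive} applies and produces some $N=N_{\p,m}\geq 0$ such that for every $n$,
\[
\HH_m(\Gamma_\bullet(\p);\Z)_n\ =\ \colim_{\substack{S\subset [n]\\ \abs{S}\leq N}}\ \HH_m(\Gamma_\bullet(\p);\Z)_S.
\]
Unwinding the identification $\HH_m(\Gamma_\bullet(\p);\Z)_S=H_m(\Gamma_S(\p);\Z)$ on both sides then yields the displayed formula in the statement of the theorem.

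The genuine obstacles lie not in this deduction but in the two results being combined. Theorem~\ref{thm:inductive} rests on the Noetherian property of FI-modules over an arbitrary Noetherian ring (Theorem~\ref{thm:noetherian}), while Theorem~\ref{thm:congruencefg} requires producing a finitely generated FI-module that surjects onto $\HH_m(\Gamma_\bullet(\p);\Z)$, which is where the topological content enters. Once both inputs are granted, the present theorem is essentially a matter of packaging.
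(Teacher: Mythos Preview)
Your proposal is correct and matches the paper's approach exactly: the paper states explicitly that this theorem is an immediate corollary of Theorem~\ref{thm:congruencefg} via Theorem~\ref{thm:inductive}, and your write-up simply spells out this deduction.
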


Under the hypothesis that the characteristic of the coefficient field $k$ is either $0$ or at least $9\cdot 2^{m-1} - 3$, Putman proved that Theorem~\ref{thm:congruencepoly} and a version of Theorem~\ref{thm:congruenceinductive} hold for all $n\geq 9\cdot 2^{m}-7$ \cite[Theorems~B and D]{P}.
One of the key tools used by Putman is the representation theory of the symmetric groups, especially the parallels between representations over fields of characteristic 0 and over fields of positive characteristic.  It is the use of this theory that requires the exclusion of fields $k$ of small characteristic. The structural analysis of FI-modules behind Theorem~\ref{thm:noetherian} can be regarded as studying the ``stable representation theory of $S_n$ over $\Z$'', at least to such a degree as this is possible.

\begin{remark}
The restriction to number rings $\O_K$ was not present in \cite{P}, where the corresponding theorem was proved for arbitrary commutative Noetherian rings of finite Krull dimension. But for us this restriction is essential. The reason is that we need to know \emph{a priori} that $H_m(\Gamma_n(\p);k)$ is a finitely-generated $k$-module for all $m\geq 0$ and $n\geq 0$.

For finite-index subgroups of $\GL_n(\O_K)$ such as $\Gamma_n(\p)$, this is guaranteed by the existence of the Borel--Serre compactification. For more general rings it is simply false; for example, for $R=\C[T]$ and $\p=(T)\subset R$, the first homology $H_1(\Gamma_n(\p);\Z)$ surjects to $\sl_n(\p/\p^2)=\sl_n \C$, which is definitely not a finitely-generated abelian group. See the proof of Theorem~\ref{thm:congruencefg} in Section~\ref{sec:congruence} for more details on how this assumption is used.
\end{remark}

\begin{remark}
Theorem~\ref{thm:noetherian} allows us to extend Putman's results to 
coefficients in an abitrary Noetherian ring; in particular, this confirms the conjecture in \cite{P} that the restriction on characteristic is unnecessary.  But there is a cost\,---\,the argument presented here does not provide an explicit stable range, as Putman's does, so that neither theorem implies the other. Furthermore, the methods in this paper only apply to number rings. We remove these shortcomings, while maintaining Putman's exponential stable range, in the forthcoming paper \cite{CE}. 
\end{remark}

\begin{remark}
Calegari~\cite{Ca} has recently determined the rate of growth of the mod-$p$ Betti numbers of level-$p^k$ congruence subgroup of $SL_n(\O_K)$. For example, for $p>3$ and the congruence subgroup $\Gamma_n(p^k)\subset \SL_n(\Z)$, he proves \cite[Lemma~3.5]{Ca} that
\[\dim_{\FF_p}H_m(\Gamma_n(p^k);\FF_p)=\binom{n^2-1}{m}+O(n^{2m-4}).\]
This result complements Theorem~\ref{thm:congruencepoly}: we show that the dimension is exactly some polynomial in $n$ (for
 large enough $n$), while Calegari's result gives the degree of this polynomial and its leading terms.
For other number rings $\O_K$ of degree $[K:\Q]=d$, he obtains a similar estimate (subject to some assumptions on how $p$ splits in $\O_K$) for $\Gamma_n(p^k)\subset \SL_n(\O_K)$ in \cite[Remark~3.6]{Ca}:\[\dim_{\FF_p}H_m(\Gamma_n(p^k);\FF_p)=\frac{n^{2md}}{m!}+O(n^{2d(m-1)}).\] 
\end{remark}

\para{Application 2: Configuration spaces}
Let $M$ be any connected, oriented manifold. 
For any finite set $S$, let $\Conf_S(M)$ denote the space $\Inj(S,M)$ of injections $S\hookrightarrow M$. An inclusion $f\colon S\hookrightarrow T$ induces a restriction map $f^*\colon \Conf_T(M)\to \Conf_S(M)$; this is nothing more than the composition of injections $\Inj(S,T)\times \Inj(T,M)\to \Inj(S,M)$. We can therefore regard $\Conf(M)$ as a \emph{co-FI-space}, i.e.\ a contravariant functor from FI to topological spaces.

When $S=[n]$, the space of injections $[n]\hookrightarrow M$ can be identified with the classical configuration space $\Conf_n(M)$ of ordered $n$-tuples of distinct points in $M$: \[\Conf_n(M)\coloneq \big\{(p_1,\ldots,p_n)\in M^n\,\big|\, p_i\neq p_j\big\}\]
Understanding the cohomology of configuration spaces, and in particular its behavior as ${n\to \infty}$, is a fundamental problem in topology.
Since cohomology is contravariantly functorial, the cohomology groups $H^m(\Conf_n(M);R)$ together form an FI-module $H^m(\Conf(M);R)$ over $R$. Our main theorem on the cohomology of configuration spaces states that this FI-module is finitely generated. 

\begin{maintheorem}
\label{thm:configurationsfg}
Let $R$ be a Noetherian ring, and let $M$ be a connected orientable manifold of dimension $\geq 2$ with the homotopy type of a finite CW complex (e.g.\ $M$ compact). For any $m\geq 0$, the FI-module $H^m(\Conf(M);R)$ is finitely generated.
\end{maintheorem}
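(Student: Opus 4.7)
The plan is to exhibit a spectral sequence of FI-modules whose $E_2$-page is manifestly finitely generated and whose abutment is $H^{\bullet}(\Conf(M); R)$; Theorem~\ref{thm:noetherian} will then propagate finite generation to the abutment. The natural candidate is the Leray spectral sequence of the open inclusion $j_n : \Conf_n(M) \hookrightarrow M^n$,
\[
E_2^{p,q}(n) \;=\; H^p\bigl(M^n;\, R^q (j_n)_*\, \underline{R}\bigr) \;\Longrightarrow\; H^{p+q}(\Conf_n(M); R).
\]
Any injection $\iota : [m] \hookrightarrow [n]$ induces the projection $M^n \to M^m$ onto the coordinates in $\iota([m])$, which restricts on the configuration locus to a map $\Conf_n(M) \to \Conf_m(M)$ and is compatible with the sheaves $R^q(j_\bullet)_* \underline{R}$. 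The whole Leray spectral sequence is therefore covariant on cohomology under injections: each $E_r^{p,q}$ is an FI-module over $R$, every differential is a map of FI-modules, and $H^m(\Conf(M); R)$ comes equipped with a finite filtration whose associated graded is $\bigoplus_{p+q=m} E_\infty^{p,q}$ as FI-modules.

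The heart of the proof is to verify that each $E_2^{p,q}$ is a finitely generated FI-module. The sheaf $R^q (j_n)_* \underline{R}$ is constructible for the diagonal stratification of $M^n$: the stratum attached to a partition $\pi$ of $[n]$ into $k$ parts is isomorphic to $\Conf_k(M)$ diagonally embedded, and the stalk of $R^q (j_n)_* \underline{R}$ along it depends only on the multiset of part-sizes of $\pi$ and on $\dim M$. Filtering $M^n$ by this stratification and invoking the fact that $H^{\ast}(M^k; R)$ is a finitely generated $R$-module for every $k$ (which uses the finite CW hypothesis on $M$), one writes $E_2^{p,q}$ as a finite direct sum, indexed by partition types, of pieces which, as $n$ varies, assemble into FI-modules \emph{induced} from a fixed bounded finite set. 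Such induced FI-modules are finitely generated by construction. The hypothesis $\dim M \geq 2$ enters to guarantee that in each fixed cohomological degree the local contribution vanishes for partitions with too many nontrivial parts, keeping the direct sum finite.

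Finite generation of the abutment now follows formally. Each $E_\infty^{p,q}$ with $p+q = m$ is a subquotient of the finitely generated FI-module $E_2^{p,q}$, and by Theorem~\ref{thm:noetherian} such subquotients are themselves finitely generated. Since $H^m(\Conf(M); R)$ is a finite iterated extension of the $E_\infty^{p,q}$ on the antidiagonal $p+q=m$, it is finitely generated as well.

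The principal obstacle is the analysis of the $E_2$-page: one must set up the Leray/stratification apparatus \emph{uniformly in $n$}, match the functoriality of pushforward with the FI-action by injections and the natural operations on partitions of $[n]$, and bound the local cohomology so that the direct sum over partition types is genuinely finite. Once these compatibilities are in place, the Noetherian property does the rest.
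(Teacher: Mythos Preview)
Your overall architecture coincides with the paper's: both use the Leray spectral sequence of the inclusion of co-FI-spaces $\Conf(M)\hookrightarrow M^{\bullet}$, argue that each $E_2^{p,q}$ is a finitely generated FI-module, and then let Theorem~\ref{thm:noetherian} push finite generation through the subquotients $E_\infty^{p,q}$ and the finite filtration on $H^m(\Conf(M);R)$.

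The substantive divergence is in the analysis of the $E_2$-page. The paper does \emph{not} stratify $M^n$ by diagonals. Instead it invokes Totaro's theorem, which identifies $E_2^{*,*}$ as an algebra generated by $E_2^{*,0}\cong H^*(M^{\bullet};R)$ together with $E_2^{0,*}$, the latter being generated by the single FI-module $E_2^{0,d-1}$, which in turn is generated in FI-degree~2 by Totaro's class $G_{12}$. Finite generation of $H^*(M^{\bullet};R)$ is handled by a separate lemma working at the cochain level (to bypass K\"unneth complications over a general ring), and then one only needs that finite tensor products of finitely generated FI-modules are finitely generated. This replaces your entire stratification step by a citation plus one clean lemma.

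Your stratification route is not wrong in spirit, but as written it has a real gap. Filtering $M^n$ by the diagonal stratification does not give a direct-sum decomposition of $H^p(M^n;R^q(j_n)_*\underline{R})$; it gives yet another spectral sequence (or long exact sequences for the closed/open pairs), and you would have to control its terms and its FI-compatibility before invoking Noetherianity again. Moreover, the assertion that the contributions ``assemble into FI-modules induced from a fixed bounded finite set'' is exactly what needs proof: one must identify the stalks of $R^q(j_n)_*\underline{R}$ along each stratum, check that the resulting local systems are trivial (or at least controllable) so that the stratum contribution is genuinely an induced module $M(W)$ as in \cite{CEF}, and verify that FI-morphisms intertwine the partition combinatorics correctly. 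None of this is impossible, but you have only asserted it; Totaro's structural theorem lets the paper avoid all of it.
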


Applying Theorem~\ref{thm:polynomial} and Theorem~\ref{thm:inductive}, respectively, we obtain the following two corollaries.

\begin{theorem}[{\bf Betti numbers of configuration spaces}]
\label{thm:configurationspoly}
Let $k$ be any field, and let $M$ be an connected orientable manifold of dimension $\geq 2$ with the homotopy type of a finite CW complex.   For any $m\geq 0$ there exists a polynomial $P(T)=P_{M,m,k}(T)\in\Q[T]$  so that for all sufficiently large $n$, \[\dim_k H^m(\Conf_n(M);k) = P(n).\]
\end{theorem}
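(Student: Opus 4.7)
The plan is to obtain Theorem~\ref{thm:configurationspoly} as an immediate consequence of the two preceding results, exactly as the text advertises. Since any field $k$ is automatically a Noetherian ring, Theorem~\ref{thm:configurationsfg} applies with $R=k$ and gives that the FI-module $V \coloneq H^m(\Conf(M);k)$ is finitely generated over $k$. This is the only input that carries genuine content; everything else is formal.

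With finite generation in hand, I would simply invoke Theorem~\ref{thm:polynomial}, which asserts that for any finitely-generated FI-module over an arbitrary field $k$ there exists an integer-valued polynomial $P(T)\in\Q[T]$ such that $\dim_k V_n = P(n)$ for all sufficiently large $n$. Applied to $V = H^m(\Conf(M);k)$, whose value at $[n]$ is $H^m(\Conf_n(M);k)$ by construction of the functor $H^m(\Conf(\,\cdot\,);k)$, this yields exactly the desired statement, with the polynomial $P=P_{M,m,k}$ depending on the data $(M,m,k)$.

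There is no real obstacle to overcome at this stage: the hard work has already been packaged into Theorems~\ref{thm:configurationsfg} and~\ref{thm:polynomial}. The only thing to double-check is compatibility of conventions, namely that the Noetherian hypothesis in Theorem~\ref{thm:polynomial} is met (fields are Noetherian) and that the finite CW and orientability hypotheses on $M$ are those under which Theorem~\ref{thm:configurationsfg} is stated. Both are automatic. One should not expect a single polynomial independent of $k$, since in positive characteristic torsion in integral cohomology can contribute extra dimensions, so the dependence of $P$ on the characteristic of $k$ is an honest feature rather than a defect of the argument.
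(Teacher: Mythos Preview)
Your proposal is correct and matches the paper's approach exactly: the paper states Theorem~\ref{thm:configurationspoly} as an immediate corollary obtained by applying Theorem~\ref{thm:polynomial} to the finitely-generated FI-module $H^m(\Conf(M);k)$ furnished by Theorem~\ref{thm:configurationsfg}. No additional argument is given or needed.
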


\begin{theorem}[{\bf An inductive description of $H_m(\Conf_n(M);R)$}]
\label{thm:configurationsinductive}
Let $R$ be a Noetherian ring, and let $M$ be a connected orientable manifold of dimension $\geq 2$ with the homotopy type of a finite CW complex. For any $m\geq 0$, there exists $N=N_{M,m}\geq 0$ such that for all $n$:
\[H_m(\Conf_n(M);R)= \colim_{\substack{S\subset [n]\\\abs{S}\leq N}} H_m(\Conf_S(M);R).\]
\end{theorem}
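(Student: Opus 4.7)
The plan is to derive Theorem~\ref{thm:configurationsinductive} as an immediate corollary of Theorem~\ref{thm:configurationsfg} and Theorem~\ref{thm:inductive}, exactly parallel to how Theorem~\ref{thm:congruenceinductive} is obtained from Theorem~\ref{thm:congruencefg}. Concretely, Theorem~\ref{thm:configurationsfg} asserts that $V \coloneq H^m(\Conf(M); R)$ is a finitely-generated FI-module over the Noetherian ring $R$, and then Theorem~\ref{thm:inductive} applied to $V$ furnishes an integer $N = N_{M,m} \geq 0$ such that
\[
H^m(\Conf_n(M); R) \;=\; \colim_{\substack{S \subset [n] \\ \abs{S} \leq N}} H^m(\Conf_S(M); R)
\]
for every $n \in \N$. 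That settles the cohomological form of the claim with a uniform constant $N$ independent of $n$.

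The only subtlety is that $\Conf(M)$ is a co-FI-space, so the natural FI-module structure lives on the cohomology $H^m$ rather than on the homology $H_m$ that appears in the statement. The homological formulation is recovered in two essentially equivalent ways. First, the restriction maps $\Conf_T(M)\to\Conf_S(M)$ for $S\subset T$ assemble the diagram $\{H_m(\Conf_S(M);R)\}$ into a contravariant functor on the poset of subsets of $[n]$ of size at most $N$, so the inductive description holds with the same constant $N$ as a dual (co-FI) version of Theorem~\ref{thm:inductive}; the proof is purely formal once finite generation of $H^m(\Conf(M);R)$ is in hand. Second, when convenient one can pass between $H^m$ and $H_m$ via universal coefficients after finite generation is established, again preserving the integer $N$.

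Consequently the real content of Theorem~\ref{thm:configurationsinductive} has been outsourced entirely to Theorem~\ref{thm:configurationsfg}: the present corollary is a one-line application of the inductive description for finitely-generated FI-modules. The main obstacle therefore lies not in this theorem but in establishing finite generation of $H^m(\Conf(M);R)$ in the first place; once that is available, Theorem~\ref{thm:inductive} does all the remaining work.
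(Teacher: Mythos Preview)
Your approach matches the paper's exactly: the paper presents Theorem~\ref{thm:configurationsinductive} as an immediate corollary of Theorem~\ref{thm:configurationsfg} together with Theorem~\ref{thm:inductive}, with no further argument given.

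You are right to flag the cohomology/homology discrepancy, and in fact you go further than the paper does in worrying about it. The FI-module to which Theorem~\ref{thm:inductive} actually applies is $H^m(\Conf(M);R)$, since $\Conf(M)$ is a co-FI-space; the paper does not explain how to pass to the homological statement. Your two proposed fixes, however, are not rigorous as stated. First, the restriction maps make $S\mapsto H_m(\Conf_S(M);R)$ a \emph{contravariant} functor on the poset $\{S\subset [n]:\abs{S}\leq N\}$, so the colimit in the displayed formula has no evident meaning, and no ``dual co-FI version'' of Theorem~\ref{thm:inductive} is established in the paper. Second, universal coefficients does not in general transport a colimit description from $H^m$ to $H_m$: dualizing converts colimits to limits, and over a general Noetherian ring the relationship is further complicated by $\Ext$ terms. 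The most natural reading is that the subscript $H_m$ in the statement is a slip for $H^m$ (note that the companion Theorem~\ref{thm:configurationspoly} is stated for $H^m$), in which case your first paragraph already constitutes the complete proof.
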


When $k$ has characteristic $0$, Theorem~\ref{thm:configurationspoly} follows from \cite[Theorem~1.9]{CEF}; see Jimenez Rolland \cite[Theorem~1.1]{RJR} for the case $\dim M = 2$.
When $M$ is an open manifold, stronger results hold. In this case Theorem~\ref{thm:configurationspoly} was proved in \cite[Theorem~4.8]{CEF}, in the stronger form that $\dim H^m(\Conf_n(M);k)=P(n)$ for \emph{all} $n\geq 0$. Similarly, when $M$ is open, Theorem~\ref{thm:configurationsinductive} can be deduced from \cite[Theorems~2.24 and 4.7]{CEF}; moreover in this case we can take $N_{M,m}=m$ if $\dim M\geq 3$ \cite[Theorem~4.2]{CEF} and $N_{M,m}=2m$ if $\dim M=2$   \cite[Remark~4.4]{CEF}.

\para{Application 3: Diagonal coinvariant algebras}
Let $k$ be an arbitrary field, let $r$ and $n$ be positive integers, and consider the algebra
\[k[\XX^{(r)}(n)]\coloneq k[x_1^{(1)},\ldots ,x_n^{(1)},\ldots ,x_1^{(r)},\ldots ,x_n^{(r)}]\] of polynomials in $r$ sets of $n$ variables. The permutation group $S_n$ acts on $k[\XX^{(r)}(n)]$ diagonally. Let $I_n$ be the ideal of $k[\XX^{(r)}(n)]$ generated by $S_n$-invariant
 polynomials with vanishing constant term. The \emph{$r$-diagonal coinvariant algebra} is the $k$-algebra 
\[R^{(r)}(n)\coloneq k[\XX^{(r)}(n)]/I_n.\]

The polynomial algebra $k[\XX^{(r)}(n)]$ is naturally endowed with an $r$-fold multi-grading, where a monomial has multi-grading $J=(j_1,\ldots,j_r)$ if its total degree in the variables $x_1^{(i)},\ldots,x_n^{(i)}$ is $j_i$. This multi-grading is $S_n$-invariant, and thus 
descends to an $S_n$-invariant multi-grading
\[R^{(r)}(n)=\bigoplus_JR^{(r)}_J(n)\]
on the $r$-diagonal coinvariant algebra $R^{(r)}(n)$.

When $k$ has characteristic $0$, the $S_n$-representations $R^{(r)}_J(n)$  have been intensively studied. However, when $r>1$ very little is \emph{explicitly}  known about the representations $R^{(r)}_J(n)$, or even their dimensions, except for small $J$ or $n$; see, e.g.\ \cite[\S1]{CEF} for a brief summary.  In \cite[Theorem~1.12]{CEF} it was proved that when $\chr k=0$, the dimension
$\dim_k(R^{(r)}_J(n))$ is a polynomial in $n$ for $n$ sufficiently large.  We are not aware of any 
literature on diagonal coinvariant algebras over fields of positive characteristic.  In this paper, we show that the polynomial behavior of dimension extends to this context.

The key fact which allows us to apply the results of this paper is that for fixed $r$, the coinvariant algebras $R^{(r)}(n)$ can be viewed as forming a co-FI-algebra $R^{(r)}$, as follows. Fix a commutative Noetherian ring $A$  and a positive integer $r$.

If $T$ is a finite set, write $A[\XX^{(r)}(T)]$ for the free commutative $A$-algebra on generators indexed by $[r] \times T$. This algebra naturally has a $\Z_{\geq 0}^r$-valued grading, where the $i$th component records the total degree in the generators $x_{(i,t)}$.
An injection $f\colon S\inj T$ induces a ring homomorphism $f^*\colon A[\XX^{(r)}(T)] \ra A[\XX^{(r)}(S)]$ defined by:
\begin{equation}
\label{eq:polycoFI}
f^*(x_{(i,t)})= \begin{cases}x_{(i,s)}&\text{ if }f(s)=t\\0&\text{ if }t\not\in \im f\end{cases}
\end{equation}
 In other words, $A[\XX^{(r)}]$ can be regarded as a $\Z_{\geq 0}^r$-graded co-FI-algebra, i.e.\ a contravariant functor from $\FI$ to graded $A$-algebras.  

Noting that $\End_{\FI^{\op}}(T)$ is the group of permutations $S_T$, we have an action of $S_T$ on the graded algebra $A[\XX^{(r)}(T)]$.  Define $I_T$ to be the ideal of $S_T$-invariant polynomials with zero constant term, and define $R^{(r)}(T)$ to be the quotient of $A[\XX^{(r)}(T)]$ by $I_T$. Since $I_T$ is a homogeneous ideal, the grading on $A[\XX^{(r)}(T)]$ descends to a $\Z_{\geq 0}^r$-valued grading on $R^{(r)}(T)$. The homomorphisms $f^*$ of \eqref{eq:polycoFI} satisfy $f^*(I_T)\subset I_S$, so they descend to ring homomorphisms $f^*\colon R^{(r)}(T)\to R^{(r)}(S)$. We obtain a $\Z_{\geq 0}^r$-graded co-FI-algebra $R^{(r)}$, which sends the finite set $\{1,\ldots,n\}$ to the usual coinvariant algebra $R^{(r)}(n)$.

We denote by $(R^{(r)})^\vee$ the graded dual of $R^{(r)}$; that is, 
for any $J \in \Z_{\geq 0}^r$ and any finite set $T$, take $(R^{(r)}_J)^\vee(T)$ to be the dual $R$-module $R^{(r)}_J(T)^\vee=\Hom(R^{(r)}_J(T),R)$.  Since $R^{(r)}_J$ is a co-FI-module over $A$, $(R^{(r)}_J)^\vee$ is an FI-module over $A$. Our main theorem on diagonal coinvariant algebras is the following.

\begin{maintheorem}
Let $A$ be a commutative Noetherian ring, and fix $r\geq 1$. For any $J\in\Z_{\geq 0}^r$, the FI-module $(R^{(r)}_J)^\vee$ is finitely generated.
\label{thm:coinvariantsfg}
\end{maintheorem}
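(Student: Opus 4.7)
The strategy is to realize $(R^{(r)}_J)^\vee$ as a sub-FI-module of a manifestly finitely-generated FI-module, so that Theorem~\ref{thm:noetherian} finishes the job.

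For each finite set $T$, the defining quotient $A[\XX^{(r)}(T)] \surj R^{(r)}(T)$ is $S_T$-equivariant, so restricts in each multi-degree $J$ to a surjection $A[\XX^{(r)}(T)]_J \surj R^{(r)}_J(T)$ of co-FI-modules (the ideal $I_T$ is preserved by each $f^*$, as already noted in the paper). Both sides are finitely generated $A$-modules, and dualizing a surjection is injective over any commutative ring. Since the $f^*$ commute with the quotient by $I_T$, the resulting maps assemble into an FI-equivariant injection
\beq
(R^{(r)}_J)^\vee \inj (A[\XX^{(r)}_J])^\vee.
\eeq
By Theorem~\ref{thm:noetherian}, it therefore suffices to show that $(A[\XX^{(r)}_J])^\vee$ is a finitely generated FI-module.

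To verify this, I would use the natural monomial basis of $A[\XX^{(r)}(T)]_J$: the basis elements are multi-monomials $\prod_{i,t}x_{(i,t)}^{e_{i,t}}$ with $\sum_t e_{i,t}=j_i$ for each $i \in [r]$. On this basis, the definition \eqref{eq:polycoFI} of $f^*$ for $f\colon S\inj T$ sends a multi-monomial on $T$ to the corresponding multi-monomial on $S$ when the monomial is supported inside $f(S)$, and to $0$ otherwise. Passing to the dual basis, the FI-structure on $(A[\XX^{(r)}_J])^\vee$ is precisely the push-forward of multi-monomials along $f$.

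The key combinatorial observation is then that any multi-monomial of multi-degree $J$ involves at most $\abs{J}=j_1+\cdots+j_r$ distinct elements of $T$. Hence every dual basis element of $(A[\XX^{(r)}_J])^\vee(T)$ is the image, under the FI-structure map for some inclusion of a subset of cardinality at most $\abs{J}$, of a dual basis element on that subset. Thus $(A[\XX^{(r)}_J])^\vee$ is generated in FI-degrees $\leq \abs{J}$ by the finite set of multi-monomials of multi-degree $J$ supported on $[\abs{J}]$, proving its finite generation and completing the argument. The whole proof is essentially formal once the definitions are unpacked; the only step requiring any care is checking that $A$-linear duality genuinely converts the co-FI-structure into an FI-structure compatibly with the embedding above, and this is immediate from the fact that each $f^*$ is an $S$-equivariant algebra homomorphism.
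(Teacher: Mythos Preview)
Your proof is correct and follows essentially the same strategy as the paper: embed $(R^{(r)}_J)^\vee$ into $(A[\XX^{(r)}_J])^\vee$ by dualizing the quotient map, verify that the latter is finitely generated, and invoke Theorem~\ref{thm:noetherian}. The only difference is cosmetic: where the paper identifies $(A[\XX^{(r)}_J])^\vee$ with $\Sym^{j_1}M(1)\otimes\cdots\otimes\Sym^{j_r}M(1)$ and cites \cite[Proposition~2.61]{CEF} to see it is a quotient of the finitely generated $M(1)^{\otimes\abs{J}}$, you unpack this identification directly in terms of the monomial basis and observe that every monomial is supported on at most $\abs{J}$ elements. Both arguments yield generation in degree $\leq\abs{J}$ by the same mechanism.
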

Applying Theorem~\ref{thm:polynomial} and Theorem~\ref{thm:inductive}, respectively, we obtain the following two corollaries.

\begin{theorem}[{\bf Multi-graded Betti numbers of diagonal coinvariant algebras}]
\label{thm:coinvariantsdim}
Let $k$ be any field.  For each fixed $r\geq 1$ and fixed $r$-multigrading $J$,  there is an integer-valued polynomial $P(T)=P_{r,J,k}(T)\in\Q[T]$  so that for all sufficiently large $n$, the dimension of the $J$-graded piece of the $r$-diagonal coinvariant algebra is given by \[\dim_k R^{(r)}_J(n)= P(n).\]
\end{theorem}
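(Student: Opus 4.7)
The plan is to deduce this corollary immediately from Theorem~\ref{thm:coinvariantsfg} combined with Theorem~\ref{thm:polynomial}. Applying Theorem~\ref{thm:coinvariantsfg} with $A=k$, the FI-module $(R^{(r)}_J)^\vee$ over $k$ is finitely generated. Since $k$ is a field, Theorem~\ref{thm:polynomial} then produces an integer-valued polynomial $P(T)\in\Q[T]$ satisfying $\dim_k\bigl((R^{(r)}_J)^\vee\bigr)_n = P(n)$ for all sufficiently large $n$.

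The only remaining task is to replace $\dim_k\bigl((R^{(r)}_J)^\vee\bigr)_n$ by $\dim_k R^{(r)}_J(n)$. This is valid as soon as $R^{(r)}_J(n)$ is finite-dimensional over $k$, since linear duality preserves dimension for finite-dimensional vector spaces. The finiteness is elementary: $R^{(r)}_J(n)$ is a quotient of the multi-graded piece $k[\XX^{(r)}(n)]_J$ of the polynomial algebra, and this piece is spanned by the monomials of multi-degree $J=(j_1,\ldots,j_r)$ in the variables $x_1^{(i)},\ldots,x_n^{(i)}$ (one such set for each $i\in[r]$), hence has dimension $\prod_{i=1}^{r}\binom{n+j_i-1}{j_i}<\infty$. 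Therefore $\dim_k R^{(r)}_J(n) = \dim_k R^{(r)}_J(n)^\vee = P(n)$ for large $n$, which is the desired statement.

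In short, the corollary is formal: all the substantive content is carried by Theorems~\ref{thm:coinvariantsfg} and~\ref{thm:polynomial}, and the only genuinely novel step is the trivial observation that each multi-graded piece of the $r$-diagonal coinvariant algebra is finite-dimensional. No real obstacle arises; the main work has already been done in establishing the finite generation of $(R^{(r)}_J)^\vee$ as an FI-module and in proving the polynomial dimension theorem for arbitrary fields.
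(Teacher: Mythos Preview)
Your proposal is correct and matches the paper's approach exactly: the paper states this result as an immediate corollary of Theorem~\ref{thm:coinvariantsfg} together with Theorem~\ref{thm:polynomial}, without further argument. Your added remark that $R^{(r)}_J(n)$ is finite-dimensional (so that dualizing preserves dimension) makes explicit a detail the paper leaves implicit.
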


We do not know any of these polynomials explicitly, except in trivial cases, and it would be very interesting to compute them. It would be intriguing to understand their connection to problems in combinatorics, which has been so fruitful in the characteristic $0$ case. 

\begin{theorem}[{\bf An inductive description of $(R^{(r)}_J)^\vee$}]
\label{thm:coinvariantsinductive}
Let $A$ be a commutative Noetherian ring, and fix $r\geq 1$. For each $J\in\Z_{\geq 0}^r$, there exists $N=N_{r,J,A}\geq 0$ such that for all $n$:
\[R^{(r)}_J(n)^\vee= \colim_{\substack{S\subset [n]\\\abs{S}\leq N}}\, R^{(r)}_J(S)^\vee\]
\end{theorem}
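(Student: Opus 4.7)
The plan is to deduce this directly from Theorem~\ref{thm:coinvariantsfg} and Theorem~\ref{thm:inductive}. The paper's preceding discussion establishes that $(R^{(r)}_J)^\vee$ is an FI-module over $A$ (since $R^{(r)}_J$ is a co-FI-module over $A$, its graded $A$-linear dual is naturally an FI-module). Theorem~\ref{thm:coinvariantsfg} asserts that this FI-module is finitely generated. Given this, Theorem~\ref{thm:inductive} applies verbatim to $V \coloneq (R^{(r)}_J)^\vee$, producing the integer $N = N_{r,J,A} \geq 0$ and the colimit presentation
\[
R^{(r)}_J(n)^\vee = V_n = \colim_{\substack{S \subset [n] \\ \abs{S} \leq N}} V_S = \colim_{\substack{S \subset [n] \\ \abs{S} \leq N}} R^{(r)}_J(S)^\vee.
\]

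So the only real work has already been done: Theorem~\ref{thm:coinvariantsfg} provides finite generation, and Theorem~\ref{thm:inductive} then converts finite generation into the stated poset colimit description. No additional argument is needed beyond invoking these two results; there is no obstacle to overcome here because the content of the theorem is precisely the specialization of the abstract inductive description to the FI-module arising from diagonal coinvariants.

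One brief sanity check worth noting is that the functoriality is correct: the restriction maps in the colimit on the right-hand side are the maps $R^{(r)}_J(T)^\vee \to R^{(r)}_J(S)^\vee$ dual to the co-FI-algebra structure maps $f^*\colon R^{(r)}(T) \to R^{(r)}(S)$ from \eqref{eq:polycoFI}, restricted to the graded piece indexed by $J$. These are precisely the structure maps of $V = (R^{(r)}_J)^\vee$ as an FI-module, so the colimit on the right of the claimed equation literally coincides with the colimit on the right of \eqref{eq:colim} applied to $V$. With this identification in hand, the theorem is an immediate corollary.
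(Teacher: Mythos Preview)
Your proposal is correct and matches the paper's approach exactly: the paper states this theorem as an immediate corollary, obtained by applying Theorem~\ref{thm:inductive} to the FI-module $(R^{(r)}_J)^\vee$, which is finitely generated by Theorem~\ref{thm:coinvariantsfg}. (One tiny slip in your sanity-check paragraph: the dual of $f^*\colon R^{(r)}(T)\to R^{(r)}(S)$ goes from $R^{(r)}_J(S)^\vee$ to $R^{(r)}_J(T)^\vee$, not the other way; but this does not affect the argument.)
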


\begin{remark}
Finally, we remark that Theorem~\ref{thm:noetherian} has recently been used by the first author and Putman in \cite{CP}. The main result of that paper is that the Johnson filtration of the mapping class group is ``finitely generated'' in a certain sense (more precisely, generated by elements supported on subsurfaces of uniformly bounded genus). Theorem~\ref{thm:noetherian} is a key technical tool in the proof, and without it the result would not be possible. 
\end{remark}

\para{Acknowledgements} We are grateful to Wolfgang L\"{u}ck for helpful conversations regarding this paper and its relation to \cite[\S16]{Lueck}, to Jesper Grodal for suggesting that Theorem~\ref{thm:inductive} could be formulated as in \eqref{eq:colim}, and to Rita Jimenez Rolland for useful comments and corrections. We are very grateful to the anonymous referee for their thorough and careful reading, and for  thoughtful suggestions that greatly improved the organization of the paper.

\section{Noetherian and polynomial properties of FI-modules}
\label{section:noetherian}

To make this portion of the paper self-contained, we will recall all necessary definitions from our earlier paper \cite{CEF}, and all results that we will use in \S\ref{section:noetherian} will be proved.

\subsection{General results on FI-modules}
Fix a commutative ring $R$, and let $\FIMod$ denote the category of FI-modules over $R$, i.e.\ the category of functors $V\colon \FI\to \RMod$. The category $\FIMod$ is an abelian category, with kernels and cokernels computed pointwise; that is, given $F\colon V\to W$, the FI-modules $\ker(F)$ and $\coker(F)$ satisfy $\ker(F)_S=\ker(F\colon V_S\to W_S)$ and $\coker(F)_S=\coker(F\coloneq V_S\to W_S)$.

If $V$ is an FI-module, we a \emph{sub-FI-module} of $V$ is an FI-module $W$ endowed with an injection $W\inj V$. Identifying $W$ with its image, such a sub-FI-module consists of sub-$R$-modules $W_S\subset V_S$ for each finite set $S$, with the property that $f_*\colon V_S\to V_T$ satisfies $f_*(W_S)\subset W_T$ for all $f\in \Hom_{\FI}(S,T)$.

\para{Finite generation} We recall the  characterizations of finitely-generated FI-modules that we will use in this paper.

\begin{definition}[{\bf Finitely generated FI-modules}]
\label{def:fg}
Let $V$ be an FI-module.   As in \cite[Definitions~2.14 and 2.15]{CEF}, we define an FI-module $V$ 
to be {\em finitely generated} (resp.\ {\em generated in degree $\leq d$})  if there exists a finite set $\{v_1,\ldots,v_k\}\subset \coprod_{n\geq 0}V_n$ (resp.\ a set 
$\{v_i|i\in I\}\subset \coprod_{n\leq d}V_n$) contained in no proper sub-FI-module of $V$. We say that $V$ is {\em generated in finite degree} if it is generated in degree $\leq d$ for some finite $d$.
\end{definition}

It is useful in practice to understand finite generation in terms of ``free'' objects. To this end we make the following definition; see \cite[Definition~2.5]{CEF}.

\begin{definition}[{\bf Free FI-modules}]
\label{def:Md}
For any $d\geq 0$, the FI-module $M(d)$ takes a finite set $S$ to the free $R$-module $M(d)_S$ on the set of injections $[d]\inj S$. In other words, $M(d)=R[\Hom_{\FI}([d],{-})]$; by the Yoneda lemma, $M(d)$ is uniquely determined by the natural identification
\[\Hom_{\FIMod}(M(d),V)\cong V_d.\]
An FI-module is \emph{free} if it is isomorphic to a direct sum $\bigoplus_{i\in I} M(d_i)$.
\end{definition}

Given $v\in V_d$, we denote by $F_v\colon M(d)\to V$ the homomorphism corresponding to $v$ under this identification; conversely, given $F\colon M(d)\to V$ we denote by $v_F\in V_d$ the image of $\id\in M(d)_d$ under $F$. By the Yoneda lemma,  the image $\im(F_v)$ is the sub-FI-module of $V$ defined by $\im(F_v)_S=\spn\{f_*(v)|f\in \Hom_{\FI}([d],S)\}$; this is the smallest sub-FI-module $W\subset V$ for which $v\in W_d$.

\begin{proposition}[{\bf Characterization of finite generation}]
\label{proposition:fg}
Let $V$ be an FI-module.  Then 
\begin{enumerate}
\item $V$ is finitely generated if and only if there exists a surjection
\[\bigoplus_{i=1}^k M(d_i)\surj V\] for some integers $d_i\geq 0$.
\item $V$ is generated in degree $\leq d$ if and only if there exists a surjection
\[\bigoplus_{i\in I} M(d_i)\surj V\qquad\text{ with all }d_i\leq d.\] 
\end{enumerate}
\end{proposition}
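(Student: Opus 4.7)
The plan is to use the Yoneda identification $\Hom_{\FIMod}(M(d),V)\cong V_d$ recorded just before the proposition to translate between generating sets and maps from direct sums of free FI-modules. The key observation is that under this correspondence, the image $\im(F_v)$ of the map determined by $v\in V_d$ is exactly the smallest sub-FI-module of $V$ containing $v$, so a collection of elements $\{v_i\}$ is contained in no proper sub-FI-module of $V$ if and only if $\sum_i \im(F_{v_i}) = V$. I will prove both directions of (1) and (2) simultaneously, keeping track of degrees.

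For the ``if'' direction: suppose we have a surjection $F\colon \bigoplus_{i\in I} M(d_i) \surj V$. Let $e_i\in M(d_i)_{d_i}$ denote the element $\id_{[d_i]}$, viewed inside the direct sum in the obvious way, and set $v_i = F(e_i)\in V_{d_i}$. Then $F$ restricted to the $i$-th summand is precisely $F_{v_i}$, so $\im(F) = \sum_{i\in I} \im(F_{v_i})$. Since $F$ is surjective this sum equals $V$, so any sub-FI-module $W\subset V$ containing every $v_i$ must contain each $\im(F_{v_i})$ and hence all of $V$; that is, $\{v_i\}_{i\in I}$ is contained in no proper sub-FI-module. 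If $I$ is finite this gives finite generation of $V$ in the sense of Definition~\ref{def:fg}, proving one direction of (1); and if every $d_i\leq d$ it gives generation in degree $\leq d$, proving one direction of (2).

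For the ``only if'' direction: suppose $\{v_i\}_{i\in I}\subset \coprod_n V_n$ generates $V$, with $v_i\in V_{d_i}$. Each $v_i$ corresponds under Yoneda to a map $F_{v_i}\colon M(d_i)\to V$, and these assemble into a single map
\[ F\colon \bigoplus_{i\in I} M(d_i) \longrightarrow V \]
by the universal property of direct sums. Its image $\im(F) = \sum_i \im(F_{v_i})$ is a sub-FI-module of $V$ containing every $v_i$, so by hypothesis $\im(F) = V$ and $F$ is surjective. Specializing to a finite generating set gives the ``only if'' direction of (1), and the same construction applied to generators with $d_i\leq d$ gives the ``only if'' direction of (2).

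There is no substantive obstacle here: once the Yoneda correspondence and the identification of $\im(F_v)$ as the minimal sub-FI-module containing $v$ are in hand, the proof is essentially bookkeeping. The only point requiring a mild care is the verification that $\im(F_v)_S = \spn\{f_*(v) \mid f\in\Hom_{\FI}([d],S)\}$ really is a sub-FI-module and really is the smallest one containing $v$; but this is exactly the content of the Yoneda remark already recorded immediately before Proposition~\ref{proposition:fg}, so it may be cited rather than reproved.
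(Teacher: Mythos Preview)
Your proof is correct and follows essentially the same approach as the paper's own proof: both use the Yoneda identification to pass between elements $v_i\in V_{d_i}$ and maps $F_{v_i}\colon M(d_i)\to V$, and both rely on the fact that $\im(F_v)$ is the smallest sub-FI-module containing $v$. The only cosmetic difference is that for the ``if'' direction the paper first observes that $\bigoplus M(d_i)$ is itself finitely generated (resp.\ generated in degree $\leq d$) and then passes to the quotient, whereas you explicitly push the generators $\id_{[d_i]}$ forward along $F$; these are the same argument.
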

The direct sum in the second part of the proposition may be 
infinite, as long as the integers $d_i$ are uniformly bounded by $d$. 
\begin{proof}
The Yoneda lemma guarantees that $M(d)$ is finitely generated by the element $\id\in M(d)_d$. Therefore the free FI-module $\bigoplus_{i\in I} M(d_i)$ is finitely generated if $I$ is finite, and is generated in degree $\leq d$ if $d_i\leq d$ for all $i\in I$.  Conversely, a subset $\{v_i|i\in I\}\subset \coprod_{n\geq 0} V_n$ detemines a canonical map $F=\bigoplus F_{v_i}\colon \bigoplus_{i\in I} M(d_i)\to V$.  The image of $F$ is the smallest sub-FI-module of $V$ containing $\{v_i|i\in I\}$.  The proposition follows.
\end{proof}

In particular, Proposition~\ref{proposition:fg} implies that the quotient of a finitely-generated FI-module is finitely generated, and similarly for generation in degree $\leq d$.

\begin{definition}[{\bf The functor $H_0$} {\cite[Definition~2.18]{CEF}}]
\label{def:H0}
Given an FI-module $V$, the FI-module $H_0(V)$ is the quotient of $V$ defined by:
\[H_0(V)_S\coloneq V_S\ /\ \big\langle \im(f_*\colon V_T\to V_S) \,\big|\  f\colon T\inj S, \,\abs{T}<\abs{S}\big\rangle\]
In other words, the FI-module $H_0(V)$ is the largest quotient of $V$ with the property that for all  $f\colon T\inj S$ with $\abs{T}<\abs{S}$, the map $f_*\colon H_0(V)_T\to H_0(V)_S$ is the zero map. 
\end{definition}

\begin{lemma}
\label{lem:H0reflects}
If $H_0(V)=0$ then $V=0$. Furthermore, the functor $H_0\colon \FIMod\to \FIMod$ reflects surjections: a homomorphism $F\colon V\to W$ is a surjection if and only if $H_0(F)\colon H_0(V)\to H_0(W)$ is a surjection.
\end{lemma}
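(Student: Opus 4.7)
The first claim should yield to straightforward induction on $|S|$. In the base case $|S|=0$, there are no injections $T\inj\emptyset$ with $|T|<0$, so the submodule being quotiented is trivial and $H_0(V)_\emptyset = V_\emptyset$; hence $H_0(V)=0$ forces $V_\emptyset=0$. For the inductive step, assume $V_T=0$ whenever $|T|<n$ and let $|S|=n$; every generator $f_*(v)$ with $f\colon T\inj S$, $|T|<n$ already vanishes since $v\in V_T = 0$, so again $H_0(V)_S=V_S$, and $H_0(V)_S=0$ gives $V_S=0$.

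For the second claim, I would handle the two directions separately via the observation that $H_0$ is a right-exact functor (being a functorial quotient). The easy direction: if $F\colon V\to W$ is surjective, then the composite $V\surj W\surj H_0(W)$ kills each $\im(f_*\colon V_T\to V_S)$ for $|T|<|S|$ (since $F$ commutes with $f_*$ and the image lands in $H_0(W)_S$, where such maps are zero), so it factors through a surjection $H_0(V)\surj H_0(W)$; this factorization is exactly $H_0(F)$.

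The reverse direction reduces to the first claim via the identity $H_0(\coker F) = \coker\bigl(H_0(F)\bigr)$. I would verify this pointwise: writing $K(V)_S\subseteq V_S$ for the submodule generated by the images $f_*(V_T)$ with $|T|<|S|$, one computes on the finite set $S$ that
\[
H_0(\coker F)_S \;=\; W_S\big/\bigl(F(V_S)+K(W)_S\bigr) \;=\; \coker\bigl(H_0(F)\bigr)_S,
\]
because $K(\coker F)_S$ is precisely the image of $K(W)_S$ in $W_S/F(V_S)$. Given this, if $H_0(F)$ is surjective then $\coker(H_0(F))=0$, so $H_0(\coker F)=0$, and the first claim yields $\coker F = 0$, i.e.\ $F$ is surjective.

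There is no real obstacle here; the only point requiring care is the bookkeeping in the cokernel identity, which is a direct module-theoretic calculation rather than anything substantive. The underlying idea — that $H_0$ is a ``conservative'' right-exact functor on FI-modules — is what makes both parts work in tandem.
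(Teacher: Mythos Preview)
Your proof is correct and follows essentially the same approach as the paper: the first claim via the observation that $H_0(V)_S=V_S$ once $V_T=0$ for all $|T|<|S|$ (the paper phrases this as a contrapositive rather than an induction, but the content is identical), and the second claim via right-exactness together with the identity $H_0(\coker F)=\coker H_0(F)$ and the first claim applied to $\coker F$.
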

\begin{proof}
To prove the first claim, assume that $V\neq 0$, and let $n=\inf\{n\in \N|V_n\neq 0\}$. Since $V_T=0$ for any $T$ with $\abs{T}<n$, the quotient defining $H_0(V)_n$ is the quotient by the zero submodule, and thus $H_0(V)_n=V_n\neq 0$.

For the second claim, if $F\colon V\to W$ is surjective, the canonical surjections $V\surj H_0(V)$ and $W\surj H_0(W)$ show that $H_0(F)$ is surjective. In other words, $H_0$ is right-exact. For the converse, right-exactness implies that $\coker H_0(F)=H_0(\coker F)$. If $H_0(F)$ is a surjection we thus have $H_0(\coker F)=\coker H_0(F)=0$. Applying the first claim, we conclude that $\coker F=0$, as desired.
\end{proof}

\begin{lemma}\ 
\label{lem:cokerfg}
Let $V$ be an FI-module. 
\begin{enumerate}[1.]
\item In each row below, the conditions (a), (b), and (c) are equivalent.

\noindent \begin{tabular}{lll}
(a) $V$ is f.g.\ FI-module&(b) $H_0(V)$ is f.g.\ FI-module&(c) $\bigoplus_{n=0}^\infty H_0(V)_n$ is f.g.\ $R$-module\\
(a) $V$ gen.\ in deg.\ $\leq d$&(b) $H_0(V)$ gen.\ in deg.\ $\leq d$&(c) $H_0(V)_n=0$ for all $n>d$\\
(a) $V$ gen.\ in finite deg.&(b) $H_0(V)$ gen.\ in finite deg.&(c) $H_0(V)_n=0$ for $n\gg 0$
\end{tabular}
\item Assume that $V_n$ is a finitely-generated $R$-module for all $n\geq 0$. Then $V$ is finitely generated if and only if $V$ is generated in finite degree.
\end{enumerate}
\end{lemma}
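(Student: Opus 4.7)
The lemma is organized so that the middle column, $H_0(V)$, serves as a bridge between the FI-module data $V$ and the underlying $R$-module data. My plan is therefore to establish (a)$\Leftrightarrow$(b) in all three rows simultaneously by a single argument using Lemma~\ref{lem:H0reflects}, and to establish (b)$\Leftrightarrow$(c) in all three rows by a single argument exploiting the fact that, by Definition~\ref{def:H0}, every morphism $f_*\colon H_0(V)_T\to H_0(V)_S$ with $\abs{T}<\abs{S}$ is identically zero. Part~2 will then be a short bookkeeping consequence, combining rows 1 and 2.

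\textbf{Proof of (a)$\Leftrightarrow$(b).} The direction (a)$\Rightarrow$(b) is immediate from the surjection $V\surj H_0(V)$: any generating set for $V$ descends to one for $H_0(V)$ in the same degrees. For (b)$\Rightarrow$(a), suppose $\{\bar v_i\}_{i\in I}$ generates $H_0(V)$ (with the appropriate cardinality/degree constraint in each row). Lift each $\bar v_i$ to some $v_i\in V$ of the same degree (possible since $V_n\surj H_0(V)_n$ is surjective pointwise), and let $W\subset V$ be the sub-FI-module generated by $\{v_i\}_{i\in I}$. Applying the right-exact functor $H_0$ to $W\inj V\surj V/W$ gives $H_0(V/W)=\coker\bigl(H_0(W)\to H_0(V)\bigr)$. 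The image in $H_0(V)$ contains every $\bar v_i$, so this cokernel vanishes; by Lemma~\ref{lem:H0reflects}, $V/W=0$ and so $W=V$, giving a generating set of the required type for $V$.

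\textbf{Proof of (b)$\Leftrightarrow$(c).} Suppose $H_0(V)$ is generated by elements $\bar v_i\in H_0(V)_{d_i}$. Because every FI-morphism $[d_i]\inj [n]$ with $d_i<n$ acts as $0$ on $H_0(V)$, the $R$-module $H_0(V)_n$ is spanned by the $S_n$-orbits of those $\bar v_i$ with $d_i=n$; in particular $H_0(V)_n=0$ whenever $n$ is not among the $d_i$. Reading this off in each row gives (b)$\Rightarrow$(c): in row~1, $\bigoplus_n H_0(V)_n$ is spanned by finitely many $S_n$-orbits of finitely many $\bar v_i$ and is therefore f.g.\ over $R$; in rows~2 and~3, the vanishing statement for $H_0(V)_n$ is immediate. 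For (c)$\Rightarrow$(b), simply take any $R$-module generating set of $\bigoplus_n H_0(V)_n$ (resp.\ any $R$-module generating set of $\bigoplus_{n\leq d}H_0(V)_n$, resp.\ the entire $\bigoplus_{n\leq N}H_0(V)_n$ once $H_0(V)_n$ vanishes for $n>N$); this is an FI-module generating set of $H_0(V)$ of the required type.

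\textbf{Proof of Part 2 and main obstacle.} One direction is trivial. For the converse, assume all $V_n$ are f.g.\ over $R$ and that $V$ is generated in some degree $\leq d$. By row~2, $H_0(V)_n=0$ for $n>d$, so $\bigoplus_n H_0(V)_n=\bigoplus_{n\leq d}H_0(V)_n$, a finite sum of quotients of the f.g.\ $R$-modules $V_n$, hence itself f.g.\ over $R$. Row~1 then upgrades $V$ to a finitely-generated FI-module. I expect the only genuinely non-formal step to be the lifting argument in (b)$\Rightarrow$(a): one cannot hope to recover $V$ from generators of $H_0(V)$ without invoking both right-exactness of $H_0$ and the fact that $H_0$ reflects zero (Lemma~\ref{lem:H0reflects}); everything else is a direct unwinding of Definition~\ref{def:H0} and the vanishing of cross-degree maps inside $H_0(V)$.
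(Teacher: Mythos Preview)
Your proof is correct and follows essentially the same approach as the paper's: both use the quotient $V\surj H_0(V)$ for the easy implications, the vanishing of cross-degree maps in $H_0(V)$ for (b)$\Leftrightarrow$(c), and Lemma~\ref{lem:H0reflects} (right-exactness of $H_0$ plus reflection of surjections) for the nontrivial direction back to (a). The only difference is organizational---the paper argues cyclically (a)$\Rightarrow$(b)$\Rightarrow$(c)$\Rightarrow$(a) and phrases the last step via an explicit map from a free FI-module, whereas you argue pairwise and phrase (b)$\Rightarrow$(a) via the sub-FI-module $W$ generated by lifts---but these are the same argument.
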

\begin{proof}
\textbf{Part 1.} To start, we observe that each condition in the third row simply asserts that the corresponding condition in the second row holds for some $d\in \N$. Therefore the equivalence of the third row follows from the equivalence of the second row.

 \textbf{(a) $\implies$ (b):} If $V$ is finitely generated or generated in degree $\leq d$, the same is true of any quotient of $V$ by definition. Since $H_0(V)$ is a quotient of $V$, (a) implies (b).

 \textbf{(b) $\implies$ (c):} Let $M=\bigoplus_{i\in I} M(d_i)$ be a free module, and consider a surjection $M\surj H_0(V)$. By the defining property of $H_0(M)$, this map factors through $M\surj H_0(M)\surj H_0(V)$. We have $H_0(M)=\bigoplus_{i\in I} H_0(M(d_i))$. Each summand $H_0(M(d))$ has the property that $H_0(M(d))_T$ vanishes unless $\abs{T}=d$, in which case $H_0(M(d))_T=M(d)_T$ is a free $R$-module of rank $d!$.
  
If $H_0(V)$ is finitely generated, we can take $I$ to be finite, so $\bigoplus_{n=0}^\infty H_0(M)_n$ is a free $R$-module of rank $\sum_{i\in I}{d_i}!$. In particular this $R$-module is finitely generated, and so the same is true of its quotient $\bigoplus_{n=0}^\infty H_0(V)_n$. This shows that (b) implies (c) in the first row. If $H_0(V)$ is generated in degree $\leq d$, we can assume that $d_i\leq d$ for all $i\in I$. In this case $H_0(M)_n=0$ for $n>d$, and so the same is true of its quotient $H_0(V)_n$. This shows that (b) implies (c) in the second row.

 \textbf{(c) $\implies$ (a):} Let $w_i\in \coprod_n H_0(V)_n$ be generators for $\bigoplus_{n=0}^\infty H_0(V)_n$ indexed by $i\in I$, and define $d_i\in \N$ so that $w_i\in H_0(V)_{d_i}$. Set $M\coloneq \bigoplus_{i\in I}M(d_i)$, and define the homomorphism $\pi\colon \bigoplus_{i\in I}M(d_i)\to V$ by sending $\id_{[d_i]}\in M(d_i)_{d_i}$ to any element of $V_{d_i}$ lifting $w_i$. By construction, $H_0(\pi)$ sends $\id_{[d_i]}\in H_0(M(d_i))_{d_i}$ to $w_i\in H_0(V)_{d_i}$. Since $H_0(V)_d$ is generated by the elements $w_i$ for which $d_i=d$, we see that $H_0(\pi)_d\colon H_0(M)_d\to H_0(V)_d$ is surjective for all $d$, i.e.\  $H_0(\pi)$ is surjective. By Lemma~\ref{lem:H0reflects}, the homomorphism $\pi\colon M=\bigoplus_{i\in I}M(d_i)\to V$ is surjective itself.
 
 If $\bigoplus_{n=0}^\infty H_0(V)$ is finitely generated, we can assume that $I$ is finite; in this case, the surjection $\pi\colon M\surj V$ verifies that $V$ is finitely generated. Similarly if $H_0(V)_n=0$ for $n>d$, we can assume that $d_i\leq d$ for all $i\in I$, so $\pi\colon M\surj V$ demonstrates that $V$ is generated in degree $\leq d$. Therefore (c) implies (a).

\textbf{Part 2.}
If $V$ is finitely generated, it is automatically generated in finite degree. Conversely, assume that $V_n$ is a finitely-generated $R$-module for all $n\geq 0$. Since $H_0(V)_n$ is a quotient of $V_n$, it is also a finitely-generated $R$-module. If $V$ is generated in finite degree, then $H_0(V)\sim 0$ by the equivalence of the third row, i.e.\ for some $d\geq 0$ we have $H_0(V)_n=0$ for $n>d$. Therefore the sum $\bigoplus_{n=0}^\infty H_0(V)_n=\bigoplus_{n=0}^d H_0(V)_d$ is finite. It follows that $\bigoplus_{n=0}^\infty H_0(V)_n$ is finitely generated as an $R$-module, so by the equivalence of the first row, $V$ is finitely generated.
\end{proof}

\para{Positive shifts} The following ``shift functors'' on FI-modules will be essential in the proofs of Theorems~\ref{thm:noetherian}, \ref{thm:polynomial}, and \ref{thm:inductive}. Unlike some other parts of the FI-module formalism, these functors would \emph{not} exist if $\FI$ were replaced with an arbitrary diagram category; they depend on the symmetric monoidal structure that comes from taking the disjoint union of finite sets.

We recall from \cite[Definition~2.30]{CEF} the definition of the ``positive shift'' functors \[S_{+a}\colon \FIMod\to\FIMod.\] Let the functor $\disjoint\colon \Sets\times \Sets\to \Sets$ be the coproduct in the category of sets, i.e.\ the disjoint union of sets. This should be formalized in some fixed functorial way; for example, we could take $S\disjoint T\coloneq (\{0\}\times S)\cup (\{1\}\times T)$. But since the coproduct is unique up to canonical isomorphism, nothing will depend on the details of this definition.

Since $f\disjoint g\colon S\disjoint S'\to T\disjoint T'$ is injective if $f\colon S\to T$ and $g\colon S'\to T'$ are injective, $\disjoint$ restricts to a functor $\disjoint\colon \FI\times \FI\to \FI$. Since $S$ and $T$ are canonically identified with subsets of $S\disjoint T$, we will often abuse notation and treat $S$ and $T$ as subsets of $S\disjoint T$.

\begin{definition} For $a\geq 0$, let $[-a]$ denote the set $\{-1,\ldots,-a\}$, and let $\Xia\colon \FI\to\FI$ be the functor \[\Xia\colon \FI\to \FI\qquad\qquad \Xia\coloneq{-}\disjoint [-a].\] Explicitly, $\Xia S$ is the finite set $S\disjoint [-a]$, and $\Xia f\colon \Xia S\inj \Xia T$ is $f\disjoint \id_{[-a]}$, the extension of $f$ by the identity on $[-a]$. Let $i_{-a}\colon [-a]\inj [-(a+1)]$ be the standard inclusion $i_{-a}\colon \{-1,\ldots,-a\}\inj \{-1,\ldots,-a,-(a+1)\}$.
\end{definition} Our choice of the set $\{-1,\ldots,-a\}$ for $[-a]$ is irrelevant, since the disjoint union $S\disjoint T$ is defined even if $S$ and $T$ are not disjoint; it is chosen just for psychological purposes, to minimize  collision with the sets the reader likely has in mind. Any other set of cardinality $a$ would work equally well.

\begin{definition}[{\bf Positive shift functor $S_{+a}$}]
\label{def:shiftpos}
Given an FI-module $V$ and an integer $a\geq 1$, the functor $S_{+a}\colon \FIMod\to \FIMod$ is defined by $S_{+a}={-}\circ \Xia$; that is, the FI-module $S_{+a}V$ is the composition \[S_{+a}V\coloneq V \circ \Xia\colon \FI\overset{\Xia}{\longrightarrow} \FI\overset{V}{\longrightarrow} \RMod.\]
Since kernels and cokernels are computed pointwise, $S_{+a}$ is an exact functor.
\end{definition}

\begin{remark}Comparing the $S_n$-representation $(S_{+a}V)_n$ with the $S_{n+a}$-representation $V_{n+a}$, we have an isomorphism of $S_n$-representations
\[(S_{+a}V)_n\cong \Res^{S_{n+a}}_{S_n} V_{n+a}.\]  Indeed, the effect of the functor $S_{+a}$ is to perform this restriction consistently for all $n$, in such a way that the resulting representations still form an FI-module.
\end{remark}

\begin{definition}[{\bf The morphism $X_a\colon V\to S_{+a}V$}]
\label{def:Xa}
The natural inclusion $\iota_T$ of $T$ into $\Xia T = T\disjoint [-a]$ induces a natural transformation $\id_{\FI}\implies \Xia$. For any FI-module $V$, this yields a natural homomorphism of FI-modules
\begin{equation}
\label{eq:SVV}
\X_a\colon V\to S_{+a}V.
\end{equation}
Explicitly, $\X_a$ is defined by
\[\X_a\colon V_T\overset{V(\iota_T)}{\longrightarrow}V_{T\disjoint[-a]}=(S_{+a}V)_T.\]
Similarly, the inclusion $\id\disjoint i_{-a}\colon T\disjoint [-a]\inj T\disjoint [-(a+1)]$ induces a natural homomorphism $\Y_a\colon S_{+a}V \to S_{+(a+1)}V$ satisfying $\X_{a+1}=\Y_a\circ \X_a\colon V\to S_{+(a+1)}V$.
\end{definition}

\begin{definition}[{\boldmath$V\approxeq W$}]  If $V$ and $W$ are FI-modules, we write $V\approxeq W$ if $S_{+a} V \cong S_{+a} W$ for some $a \geq 0$.  This notation is most often used in this paper in the form $V \approxeq 0$, which simply means that $V_n$ vanishes for all sufficiently large $n$. For example, in this language Lemma~\ref{lem:cokerfg}.1 states that $V$ is generated in finite degree if and only if $H_0(V)\approxeq 0$. If $X$ and $Y$ are FI-simplicial complexes, we write $X\approxeq Y$ if for each $k$, the $k$-skeleta satisfy $S_{+a} X^{(k)}\cong S_{+a} Y^{(k)}$ for some $a$ depending on $k$.
\end{definition}

\subsection{The Noetherian property (Proof of Theorem~\ref{thm:noetherian})}
The shift functors $S_{+a}$ take on a particularly simple form when applied to the FI-modules $M(d)$. 
\begin{proposition}
\label{pr:SaMd}
For any $a\geq 0$ and any $d\geq 0$, there is a natural decomposition
\begin{equation}
\label{eq:Mderivative}
S_{+a}M(d)=M(d)\oplus Q_a
\end{equation}
where $Q_a$ is a free FI-module finitely generated in degree $\leq d-1$.
\end{proposition}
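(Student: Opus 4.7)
The plan is to unwind the definition of $S_{+a}M(d)$ by decomposing injections from $[d]$ according to which elements land in the ``old'' set versus the ``new'' set $[-a]$.

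First, I would compute $(S_{+a}M(d))_T$ directly. By definition, this is $M(d)_{T\disjoint[-a]} = R[\Hom_{\FI}([d], T\disjoint[-a])]$. Given an injection $f\colon [d]\inj T\disjoint[-a]$, set $A = f^{-1}(T)\subseteq [d]$. Partitioning injections by this preimage gives a canonical bijection
\[
\Hom_{\FI}([d], T\disjoint[-a])\ \cong\ \coprod_{A\subseteq [d]}\, \Inj(A, T)\times \Inj([d]\setminus A,\, [-a]),
\]
and hence an $R$-module decomposition
\[
(S_{+a}M(d))_T\ \cong\ \bigoplus_{A\subseteq [d]} R[\Inj(A, T)]\ \otimes\ R[\Inj([d]\setminus A,\, [-a])].
\]

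Next I would verify that this decomposition is natural in $T$, so that it is an isomorphism of FI-modules. The key point is that an $\FI$-morphism $g\colon T\inj T'$ induces the map $g\disjoint \id_{[-a]}$ on $T\disjoint[-a]$, and for any $f$ with $f^{-1}(T) = A$, the composition $(g\disjoint\id)\circ f$ still has preimage of $T'$ equal to $A$. Thus $g$ acts on each summand by the obvious action on the first factor $R[\Inj(A, T)]$ while fixing the second factor $R[\Inj([d]\setminus A, [-a])]$, which is an $R$-module depending only on $|A|$ and $a$.

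Finally, after choosing any bijection $[|A|]\cong A$, the FI-module $T\mapsto R[\Inj(A, T)]$ is isomorphic to $M(|A|)$. The $A = [d]$ summand has second factor $R[\Inj(\emptyset,[-a])] = R$, and recovers $M(d)$ canonically (using the inclusion $\iota_T\colon T\inj T\disjoint[-a]$, this is exactly the copy of $M(d)$ picked out by the natural transformation $X_a$). For each proper subset $A\subsetneq [d]$, the second factor is a finite free $R$-module of rank $a(a-1)\cdots(a-(d-|A|)+1)$ (zero if $a < d-|A|$), so this summand contributes a finite direct sum of copies of $M(|A|)$ with $|A|\leq d-1$. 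Setting $Q_a$ equal to the sum of these summands over all $A\subsetneq [d]$ gives the required free FI-module finitely generated in degree $\leq d-1$.

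There is no serious obstacle here; the only thing to be careful about is naturality in $T$ of the partition by $A = f^{-1}(T)$, which uses crucially that the $\FI$-action shifts by $g\disjoint\id_{[-a]}$ and therefore never moves elements from $[-a]$ into $T'$ or vice versa.
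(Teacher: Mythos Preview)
Your proof is correct and is essentially the same as the paper's own argument: both decompose the basis of $(S_{+a}M(d))_T$ by partitioning injections $f\colon [d]\inj T\disjoint[-a]$ according to which elements of $[d]$ land in $T$ versus $[-a]$, verify naturality using that the FI-structure acts via $g\disjoint\id_{[-a]}$, and identify each piece with a free FI-module of lower degree (the paper indexes by $f^{-1}([-a])$ while you index by the complementary set $A=f^{-1}(T)$, but this is only notational).
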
 Although this proposition appears unassuming, this is the key combinatorial fact about the category $\FI$ that makes possible our approach to the Noetherian property for $\FI$-modules. For comparison, if $\FI$ were replaced by the category of finite-dimensional $\FF$--vector spaces and linear injections for some field $\FF$, the analogous proposition would not hold (even if the field $\FF$ were finite), and so our proof of the Noetherian property does not extend to this case.
\begin{proof}[Proof of Proposition~\ref{pr:SaMd}]
Recall that a basis for $M(d)_S$ is given by $\Hom_{\FI}([d],S)$, so a basis for $(S_{+a}M(d))_S$ consists of the injections $f\colon [d]\hookrightarrow S\disjoint [-a]$.
We can stratify these according to the subset $T=f^{-1}([-a])\subset [d]$ and the restriction $f|_T\colon T\hookrightarrow [-a]$. Given $g\colon S\inj S'$, the  map $g_*\colon S_{+a}M(d)_S\to S_{+a}M(d)_{S'}$ is induced by the composition \[g_*f=(g\disjoint \id_{[-a]})\circ f,\] so the subset $f^{-1}([-a])=T$ and the restriction $f|_T$ are not changed by the composition $f\mapsto g_*f$. Therefore our stratification of $S_{+a}M(d)_S$ in fact defines a  decomposition of $S_{+a}M(d)$ as a direct sum of  FI-modules.

For fixed $T\subset [d]$ and $h\colon T\hookrightarrow [-a]$, let $M^{T,h}\subset S_{+a} M(d)$ be spanned by those $f\colon [d]\inj S\disjoint [-a]$ satisfying $f^{-1}([-a])=T$ and $f|_T=h$.  These injections $f$ are distinguished by the restrictions $f|_{[d]-T}\colon [d]-T\inj S$, and we have $(g_*f)|_{[d]-T}=g\circ f|_{[d]-T}$. Choosing a bijection $[d]-T\cong [d-\abs{T}]$, we obtain an isomorphism $M^{T,h}\cong M(d-\abs{T})$, and thus a decomposition
\[S_{+a}M(d)=\bigoplus_{T\subset [d]} M(d-\abs{T})\tensor_R R[\Hom_{\FI}(T,[-a])]\] which is natural up to the choice of  identifications $[d]-T\cong [d-\abs{T}]$. In particular, the summand with $T=\emptyset$ is canonically isomorphic to $M(d)$; singling out this factor gives the claimed decomposition.
\end{proof}

\begin{corollary}[{\cite[Proposition~2.31]{CEF}}]
\label{cor:shiftgeneration}
If $V$ is generated in degree $\leq d$, then $S_{+a}V$ is generated in degree $\leq d$. Conversely, if $S_{+a}V$ is generated in degree $\leq d$, then $V$ is generated in degree $\leq d+a$.
\end{corollary}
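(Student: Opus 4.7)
The plan is to handle the two implications of Corollary~\ref{cor:shiftgeneration} separately, using Proposition~\ref{pr:SaMd} together with the exactness of $S_{+a}$.

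For the forward implication, suppose $V$ is generated in degree $\leq d$. By Proposition~\ref{proposition:fg} there is a surjection $\bigoplus_{i\in I} M(d_i) \surj V$ with each $d_i\leq d$. Since $S_{+a}$ is exact (Definition~\ref{def:shiftpos}) and evidently commutes with direct sums (both being computed pointwise), applying $S_{+a}$ yields a surjection $\bigoplus_{i\in I} S_{+a}M(d_i) \surj S_{+a}V$. By Proposition~\ref{pr:SaMd} each $S_{+a}M(d_i)\cong M(d_i)\oplus Q_a^{(i)}$ with $Q_a^{(i)}$ free and generated in degree $\leq d_i-1\leq d-1$. So the domain is free with all generator-degrees $\leq d$, and Proposition~\ref{proposition:fg} gives that $S_{+a}V$ is generated in degree $\leq d$.

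For the converse, I would argue via the functor $H_0$. Unwinding Definition~\ref{def:shiftpos}, the identification $(S_{+a}V)_m = V_{[m]\disjoint [-a]}$ turns the hypothesis ``$S_{+a}V$ is generated in degree $\leq d$'' into the following concrete statement: for every $m>d$, each element of $V_{[m]\disjoint [-a]}$ is an $R$-linear combination of elements $V(f\disjoint \id_{[-a]})(w)$ for injections $f\colon [k]\inj [m]$ with $k\leq d$ and $w\in V_{[k]\disjoint [-a]}$. Given $n>d+a$, identify $[n]$ with $[n-a]\disjoint [-a]$; applying the previous observation with $m=n-a>d$ exhibits every element of $V_n$ as lying in the image of some $\FI$-map $V_T\to V_n$ with $\abs{T}=k+a\leq d+a<n$. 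Hence $H_0(V)_n=0$ for all $n>d+a$, and the second row of Lemma~\ref{lem:cokerfg}.1 yields that $V$ is generated in degree $\leq d+a$.

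Neither implication presents a serious obstacle. The only bookkeeping is tracking the canonical identification between $T\disjoint [-a]$ and a set of cardinality $\abs{T}+a$, and checking that the FI-structure on $S_{+a}V$ corresponds under this identification to precisely those $\FI$-maps on $V$ that fix the last $a$ elements. The substantive content of the corollary lies entirely in Proposition~\ref{pr:SaMd} and in the exactness of $S_{+a}$.
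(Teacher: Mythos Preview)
Your proof is correct and follows essentially the same approach as the paper: the forward direction applies $S_{+a}$ to a free presentation and invokes Proposition~\ref{pr:SaMd}, while the converse reduces to showing $H_0(V)_n=0$ for $n>d+a$ via Lemma~\ref{lem:cokerfg}.1. The only cosmetic difference is that the paper packages the converse as an abstract surjection $H_0(S_{+a}V)_T\surj H_0(V)_{T\sqcup[-a]}$ (obtained by comparing the defining submodules), whereas you spell out the same containment element-by-element.
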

\begin{proof}
Choosing a surjection $\bigoplus M(d_i)\surj V$ with $d_i\leq d$, and given that $S_{+a}V$ is exact, it suffices to prove the first claim for $V=M(d_i)$. This follows immediately from Proposition~\ref{pr:SaMd}.

For the second claim we use Lemma~\ref{lem:cokerfg}.1, which says that $S_{+a}V$ is generated in degree $\leq d$ if and only if $H_0(S_{+a}V)_n=0$ whenever $n>d$. We will exhibit in the next paragraph a surjection of $R$-modules $H_0(S_{+a}V)_n\surj H_0(V)_{n+a}$. From this surjection we deduce that $H_0(V)_m=0$ for all $m>d+a$; applying Lemma~\ref{lem:cokerfg}.1 again, we conclude that $V$ is generated in degree $\leq d+a$ as desired.

We now exhibit the claimed surjection, in the form $H_0(S_{+a}V)_T\surj H_0(V)_{T\disjoint [-a]}$. By Definition~\ref{def:H0}, $H_0(S_{+a}V)_T$ is the quotient of $(S_{+a}V)_T=V_{T\disjoint [-a]}$ by \[
\big\langle \im (f\sqcup \id_{[-a]})_*\colon V _{S\disjoint [-a]}\to V_{T\disjoint [-a]}\,\ \big|\,\qquad f\colon S\inj T, \abs{S}<\abs{T}\big\rangle\] while $H_0(V)_{T\disjoint [-a]}$ is the quotient of $V_{T\disjoint [-a]}$ by \[
\big\langle \im g_*\colon V_{S'}\to V_{T\disjoint [-a]}\qquad\quad\,\big|\,\  g\colon S'\inj T\disjoint [-a], \abs{S'}<\abs{T}+a\big\rangle.\]
The former is contained in the latter, so $H_0(V)_{T\disjoint [-a]}$ is a quotient of $H_0(S_{+a}V)_T$ as claimed.
\end{proof}

\begin{definition}
\label{def:pia}
We define
\begin{equation}
\label{eq:shiftprojection}
\pi_a\colon S_{+a}M(d)\twoheadrightarrow M(d)
\end{equation} 
 to be the projection determined by \eqref{eq:Mderivative}. Concretely, a basis for $(S_{+a}M(d))_T$ consists of injections $[d]\hookrightarrow T\disjoint[-a]$, and the projection 
$\pi_a$ simply sends to 0 any injection whose image is not contained in $T$.
\end{definition}

The projection $\pi_a$ is related to the decomposition of $M(d)_n$ given by splitting up the injections $\{1,\ldots,d\}\hookrightarrow\{1,\ldots,n\}$ according to their image. Each $d$-element subset $S$ of $\{1,\ldots,n\}$ gives a summand isomorphic to $M(d)_d$, yielding the decomposition as $R$-modules
\begin{equation}
\label{eq:Mdsplitting}
M(d)_n\simeq M(d)_d^{\oplus \binom{n}{d}}.
\end{equation}
In degree $d$, the projection $\pi_a$ yields a map from $(S_{+(n-d)}M(d))_d\simeq M(d)_n$ to $M(d)_d$; this is just the projection of \eqref{eq:Mdsplitting} onto a single factor of the right side.

\begin{proof}[Proof of Theorem~\ref{thm:noetherian}]
We prove by induction on $d\in \N$ that if $V$ is a  FI-module finitely generated in degree $\leq d$, then every sub-FI-module of $V$ is finitely generated. Such an FI-module $V$ is a quotient of a finite direct sum of FI-modules $\bigoplus_{i=1}^k M(d_i)$ with $d_i\leq d$. Since the Noetherian property descends to quotients and is preserved under direct sum, it suffices to prove the theorem for $V=M(d_i)$, and by induction it suffices to prove it for $V=M(d)$.

\para{Reduction to $W^a$}
Fix a sub-FI-module  $W$ of $M(d)$; our goal is to prove that $W$ is finitely generated. For each $n\in \N$ we have that $M(d)_n$ is a finitely-generated $R$-module; since $R$ is a Noetherian ring, its submodule $W_n$ is also finitely generated as an $R$-module. Therefore by Lemma~\ref{lem:cokerfg}.2 it suffices to prove that $W$ is generated in finite degree. By Corollary~\ref{cor:shiftgeneration} it suffices to prove that $S_{+a}W$ is finitely generated for some $a\geq 0$.

Let us therefore consider the FI-module $S_{+a}W$.
For any $a\geq 0$, the decomposition from Proposition~\ref{pr:SaMd} gives an exact sequence
\[0\to Q_a\to S_{+a}M(d)\to M(d)\to 0.\] Since $S_{+a}$ is exact, we can think of $S_{+a}W$ as a sub-FI-module of $S_{+a}M(d)$. Thus the above induces an exact sequence
\[0\to W_{Q,a}\to S_{+a}W\to W^a\to 0\]
where $W_{Q,a}\coloneq Q_a\cap(S_{+a}W)$ and $W^a\coloneq \pi_a(S_{+a}W)\subset M(d)$.

For any $a$ we know that $W_{Q,a}$ is a sub-FI-module of $Q_a$, which is finitely generated in degree $\leq d-1$ by Proposition~\ref{pr:SaMd}. Therefore we can apply the inductive hypothesis to conclude that $W_{Q,a}$ is finitely generated for any $a$. To prove that $S_{+a}W$ is finitely generated, it thus suffices to show that $W^a$ is finitely generated. We will do this, and thus prove the theorem, by showing that there exists some $N\geq 0$ such that $W^N$ is finitely generated in degree $\leq d$.

\para{Finding $N$ such that $W^N$ is generated in degree $\leq d$} The sequence of sub-FI-modules $W^a\subset M(d)$ is increasing: $W^a\subset W^{a+1}$. Indeed, the map $\Y_a\colon S_{+a} M(d) \ra S_{+(a+1)} M(d)$ of Definition~\ref{def:Xa} satisfies $\pi_{a+1}\circ \Y_a=\pi_a$ and $\Y_a(S_{+a} W) \subset S_{+(a+1)}(W)$, from which it follows that $W^a \subset W^{a+1}$.  Let $W^\infty$ denote the sub-FI-module $\bigcup_a W^a \subset M(d)$.

We will show below that $W^\infty$ is  generated by $W^\infty_d$ (that is, the only submodule $X\subset W^\infty$ with $X_d=W^\infty_d$ is $X=W^\infty$). Since $W^\infty_d$ is a sub-$R$-module of $M(d)_d \cong R[S_d]$, it is itself finitely generated as an $R$-module, so the claim implies that $W^\infty$ is finitely generated in degree $\leq d$.
Moreover, the chain
\beq
W_d = W^0_d \subset W^1_d \subset W^2_d \subset \ldots \subset W^\infty_d=\bigcup_a W^a_d
\eeq
is a chain of $R[S_d]$-submodules of $M(d)_d \cong R[S_d]$.  Since $R[S_d]$ is a finitely-generated $R$-module and $R$ itself is Noetherian, there must be some $N$ such that $W^N_d = W^\infty_d$. Since $W^\infty$ is generated by $W^\infty_d$, it follows that $W^N=W^\infty$, and thus that $W^N$ is finitely generated in degree $\leq d$ as desired. 

\para{Proving that $W^\infty$ is generated by $W^\infty_d$} Let us investigate the sub-FI-modules $W^a\subset M(d)$. Expanding the definition of $W^a$, we have the following concrete condition: an element
\beq
x = \sum_{f\colon [d] \inj T} r_f f \in M(d)_T
\eeq
lies in $W^a$ if and only if there is an element
\begin{equation} \label{eq:witness}
\phantom{\subset M(d)_{T\disjoint [-a]}}w = \sum_{g\colon [d] \inj T\disjoint [-a]} r'_g g \in W_{T\disjoint [-a]}\subset M(d)_{T\disjoint [-a]}
\end{equation}
such that $r'_g = r_g$ whenever the image of $g$ lies in $T$. The element $x\in M(d)_T$ lies in $W^\infty$ if there is \emph{some} $a\geq 0$ and some $w\in W_{T\disjoint [-a]}$ for which \eqref{eq:witness} holds.

For each $a\geq 0$, let $U^{a}$ be the smallest sub-FI-module of $W^a$ containing $W^a_d$. We will show that for any $a\geq 0$ and any $n\leq a+d$ we have \[W^{a+d-n}_n\subset U^{a}_n\subset M(d)_n.\]
Given $x \in W^{a+d-n}_n\subset M(d)_n$, write $x=\sum_{f\colon [d]\inj [n]} r_f f$ as above, and for each  subset $S\subset [n]$ of cardinality $d$, denote by $x_S$ the sum
\beq
x_S\coloneq \sum_{\im f = S} r_f f\in M(d)_S.
\eeq We have $x=\sum_S i_S(x_S)$, where $i_S\colon S\inj [n]$ is the natural inclusion. 

Since $x\in W^{a+d-n}_n$, there exists $w\in W_{[n]\disjoint [-(a+d-n)]}$ so that, writing \[w=\sum_{g\colon [d]\inj[n]\disjoint[-(a+d-n)]}r'_g g\] as in \eqref{eq:witness}, we have $r'_g=r_g$ for all $g$ with image contained in $[n]$. But then it is \emph{a fortiori} true that $r'_g=r_g$ for all $g$ with $\im g=S$. Choosing a bijection between $([n]-S)\disjoint [-(a+d-n)]$ and $[-a]$, we can think of $w$ as an element of $W_{S\disjoint [-a]}$ which witnesses  that $x_S\in W^a_S$ as in  \eqref{eq:witness}.

Since $\abs{S}=d$, we have $W^a_S=U^a_S$ by definition. Since $x=\sum_S i_S(x_S)$, we conclude that $x\in U^a$. Since this holds for all $x\in W^{a+d-n}_n$, we see that $W^{a+d-n}_n$ is contained in $U^a$, as claimed above.  Passing to the limit as $a\to \infty$ and setting $U^\infty\coloneq \bigcup_a U^a$, we see that $W^\infty_n$ is contained in $U^\infty_n$ for all $n\in \N$. Since $U^\infty$ is contained in $W^\infty$ by definition, this shows that $U^\infty=W^\infty$; in other words, $W^\infty$ is generated by $W^\infty_d$, which was the remaining claim to be proved.
\end{proof}

\subsection{Dimensions of f.g.\ FI-modules (Proof of Theorem~\ref{thm:polynomial})}
Let $V$ be an FI-module.  The {\em torsion submodule} of $V$, denoted $T(V)$, consists of those $v\in V_S$ for which $f_*(v)=0$ for some finite set $S'$ and some (whence every) $f\in \Hom_{\FI}(S,S')$.  Alternatively, it can be written as
\beq
T(V) = \bigcup_{a \geq 0} \ker( \X_a\colon V \ra S_{+a} V).
\eeq
We say that $V$ is {\em torsion free} if $T(V) = 0$.  It is clear that $V/T(V)$ is always torsion free.  When $k = \C$, the functor $T$ is discussed by Sam and Snowden~\cite[\S 4.4]{SS}, where it appears as the left exact functor $H^0_{\mathfrak{m}}$ whose derived functors provide a local cohomology theory for FI-modules.

\begin{lemma}
\label{lem:torsion}
If $V$ is a finitely-generated FI-module over a Noetherian ring, then $T(V) \approxeq 0$; in other words, $T(V)_n=0$ for all sufficiently large $n$.
\end{lemma}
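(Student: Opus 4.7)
The plan is to combine the Noetherian property (Theorem~\ref{thm:noetherian}) with the defining property of torsion in a very direct way. Since $T(V)$ is a sub-FI-module of the finitely-generated FI-module $V$, Theorem~\ref{thm:noetherian} immediately tells us that $T(V)$ itself is finitely generated. So I can choose finitely many generators $v_1,\ldots,v_k$ of $T(V)$, with $v_i\in T(V)_{d_i}$.

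Next, for each generator $v_i$, the definition of $T(V)$ provides some $a_i\geq 0$ with $\X_{a_i}(v_i)=0$ in $(S_{+a_i}V)_{[d_i]}=V_{[d_i]\disjoint[-a_i]}$. Let $a\coloneq \max_i a_i$ and $d\coloneq \max_i d_i$. Since $\X_a=\Y_{a-1}\circ\cdots\circ\Y_{a_i}\circ \X_{a_i}$, each generator satisfies $\X_a(v_i)=0$ as well. I would then check the following: for any $n\geq d+a$ and any injection $f\colon[d_i]\inj[n]$, one can factor $f=g\circ\iota_{[d_i]}$, where $\iota_{[d_i]}\colon [d_i]\inj [d_i]\disjoint[-a]$ is the standard inclusion and $g\colon[d_i]\disjoint[-a]\inj[n]$ is an injection extending $f$ by any injection of $[-a]$ into $[n]\setminus f([d_i])$ (there is enough room, because $n-d_i\geq a$). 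Then
\[
f_*(v_i)=g_*\bigl((\iota_{[d_i]})_*(v_i)\bigr)=g_*(\X_a(v_i))=g_*(0)=0.
\]

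Because $v_1,\ldots,v_k$ generate $T(V)$ as an FI-module, $T(V)_n$ is $R$-spanned by elements of the form $f_*(v_i)$ for $f\colon[d_i]\inj[n]$. The computation above shows every such element vanishes once $n\geq d+a$, so $T(V)_n=0$ for all $n\geq d+a$, which is precisely $T(V)\approxeq 0$.

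There is no real obstacle here: the only substantive input is the Noetherian property, which promotes torsion from a pointwise condition to a condition controlled by finitely many generators; after that the argument is just the observation that an injection $[d_i]\inj[n]$ in a large enough target always factors through the standard inclusion $\iota_{[d_i]}$, so a single uniform $a$ suffices to kill every generator simultaneously.
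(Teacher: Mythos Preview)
Your proof is correct and follows essentially the same approach as the paper: use Theorem~\ref{thm:noetherian} to reduce to finitely many torsion generators, then observe that any injection from $[d_i]$ into a sufficiently large set factors through the map witnessing the torsion of $v_i$. The only cosmetic difference is that the paper takes $M=\max_i(d_i+a_i)$ rather than your $d+a=\max_i d_i+\max_i a_i$, but both bounds work.
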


\begin{proof}
By Theorem~\ref{thm:noetherian}, the submodule $T(V)$ is finitely generated, say by $v_1,\ldots,v_k$ with $v_i\in V_{d_i}$. Therefore $T(V)_S$ is spanned by $\bigcup_i\{f_*(v_i)|f\colon [d_i]\to S\}$ for every finite set $S$.
For each $i$ there exists some $a_i$ for which $v_i\in \ker(X_{a_i}\colon V\to S_{+a_i}V)$. Setting  $M_i=d_i+a_i$, this implies that $f_*(v_i) = 0$ for any $f\in \Hom([n_i],S)$ with $\abs{S} \geq M_i$.
Taking $M\coloneq \max M_i$, we see that as long as $\abs{S}\geq M$ we have $f_*(v_i)=0$  for any $i$ and any $f\in \Hom([n_i],S)$. Since these elements generate $T(V)_S$, this implies that $T(V)_S=0$ whenever $\abs{S}\geq M$. Therefore $T(V)\approxeq 0$ as desired.\end{proof}

\begin{proof}[Proof of Theorem~\ref{thm:polynomial}]
We will prove the stronger statement that if $V$ is finitely generated in degree $\leq d$, then $\dim_k V_n$ is eventually equal to an integer-valued polynomial of degree $\leq d$. The proof is by induction on $d$. We say that $V$ is generated in degree $\leq -1$ if $V=0$, and that a polynomial is of degree $\leq -1$ if it vanishes; we can thus take as our base case $d=-1$.

It follows from Lemma~\ref{lem:torsion} that the torsion-free quotient $V' = V/T(V)$ has $\dim_k V'_n = \dim_k V_n$ for $n \gg 0$, and being a quotient of $V$ we know that $V'$ is still generated in degree $\leq d$.  Thus we can assume without loss of generality that $V$ is torsion-free.  Under this assumption, the natural map $\X_1\colon V\to S_{+1}V$ is injective.  Let $DV$ denote the cokernel of this map.

We show that $DV$ is finitely generated in degree $\leq d-1$.
First, if $V=M(m)$ for some $m\leq d$, then Proposition~\ref{pr:SaMd} shows that $DV=Q_1$ is finitely generated in degree $\leq m-1$.  A general FI-module $V$ is finitely generated in degree $\leq d$ if there is a surjection $M\coloneq \bigoplus_{i=1}^k M(d_i)\surj V$ with $d_i\leq d$. Since $S_{+1}$ is exact, $S_{+1}M\surj S_{+1}V$ is surjective. We conclude that the quotient $DM$ surjects to $DV$, so the general claim follows from the special case $V=M(m)$ proved above.

By induction, we can conclude that $\dim_k DV_n$ is eventually a polynomial of degree at most $d-1$.
But if we write $f(n)$ for $\dim_k V_n$, we have
\beq
\dim DV_n =  \dim (S_{+1}V)_n-\dim V_n = f(n+1) - f(n)
\eeq
Therefore we have just proved that the discrete derivative $f(n+1) - f(n)$ is eventually a polynomial of degree at most $d-1$. It follows that $f(n)$ is eventually a polynomial of degree at most $d$, which is the statement to be proved.
\end{proof}

\subsection{Inductive description of f.g.\ FI-modules (Proof of Theorem~\ref{thm:inductive})}
Our goal in this section is to understand when an FI-module $V$ admits an inductive description
\[V_n=\colim_{S\subsetneq [n]} V_S,\] at least for large enough $n$. As we will see, such a description is equivalent to the exactness of the sequence
\[\tSminus{2}V\to \tSminus{1}V\to V\to 0,\] where $\tSminus{2}$ and $\tSminus{1}$ are certain functors defined below. In fact, we will define an entire complex 
\begin{equation*}
\tSminus{\ast}V=\quad\cdots\to\tSminus{a}V\to\tSminus{(a-1)}\to \cdots\to \tSminus{2}V\to \tSminus{1}V\to V\to 0.
\end{equation*}
This complex also appeared in \cite[\S4]{P}, where it arose in a very different way, and will be used in Section~\ref{sec:congruence}.

\para{Ordered negative shifts $B_a$}
We begin by defining functors $B_a\colon \FIMod\to \FIMod$, and a complex \begin{equation*}
B_{*}V=\quad\cdots\to B_aV\to B_{a-1}V\to \cdots\to B_2V\to B_1V\to V\to 0
\end{equation*}
We will then define $\tSminus{a}V$ as a quotient of $B_aV$, in such a way that the complex $B_{*}V$ descends to the desired complex $\tSminus{\ast}V$.

\begin{definition}[\textbf{Ordered negative shift functor $B_a$}]
\label{def:ordnegshift}
Given an FI-module $V$ and an integer $a\geq 0$, we define $B_a V$ to be the FI-module which maps a set $S$ to the direct sum \begin{equation}
\label{eq:tempnegshift}
(B_a V)_S = \bigoplus_{f\colon [a]\hookrightarrow S}V_{S-f([a])}.
\end{equation}
We denote by $(B_a V)_{S,f}$ the summand corresponding to $f$ in the decomposition \eqref{eq:tempnegshift}. 
The map $g_*\colon (B_aV)_S\to (B_aV)_T$ induced by $g\colon S\inj T$ takes the factor $(B_aV)_{S,f}$ to the factor $(B_aV)_{T,g\circ f}$ by the map $(g|_{S-f([a])})_*\colon V_{S-f([a])}\to V_{T-g\circ f([a])}$. The assignment $V\mapsto B_aV$ defines the exact functor $B_a\colon \FIMod\to \FIMod$.
\end{definition}

\begin{lemma}
\label{lem:BaMd}
For any $d\geq0$ and any $a\geq 0$ there is a natural isomorphism $B_aM(d)\simeq M(a+d)$.
\end{lemma}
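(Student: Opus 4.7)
The plan is to exhibit the isomorphism by identifying bases on both sides. Unpacking the definitions, $(B_a M(d))_S$ has $R$-basis given by pairs $(f, g)$ consisting of an injection $f \colon [a] \inj S$ together with an injection $g \colon [d] \inj S - f([a])$. On the other hand, $M(a+d)_S$ has $R$-basis given by injections $h \colon [a+d] \inj S$.

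First, I will fix once and for all the bijection $[a] \disjoint [d] \cong [a+d]$ sending the $i$-th element of $[a]$ to $i$ and the $j$-th element of $[d]$ to $a+j$. Under this identification, a pair $(f, g)$ as above corresponds to the injection $h \colon [a+d] \inj S$ obtained by concatenating $f$ and $g$; the condition that $g$ lands in $S - f([a])$ is precisely what guarantees that $h$ is injective, and conversely every injection $h$ decomposes uniquely in this way by taking $f = h|_{[a]}$ and $g = h|_{[d]}$. Extending linearly yields an $R$-module isomorphism $(B_a M(d))_S \cong M(a+d)_S$.

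Second, I will check naturality in $S$. Given $\phi \colon S \inj T$ in $\FI$, Definition~\ref{def:ordnegshift} shows that the map $\phi_*$ on $B_a M(d)$ sends the basis element $(f, g)$ to the basis element $(\phi \circ f, \phi \circ g)$ of the summand indexed by $\phi \circ f$. Under our identification this is exactly the injection $\phi \circ h \colon [a+d] \inj T$, which is the image of $h$ under $\phi_*$ acting on $M(a+d)$. Hence the isomorphism commutes with the FI-module structure.

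The construction is essentially forced, so I expect no real obstacle beyond bookkeeping; the only point requiring any care is confirming that the disjointness condition built into the direct sum defining $B_a M(d)$ matches precisely the injectivity condition defining $M(a+d)$, which is exactly what makes the bijection of bases work.
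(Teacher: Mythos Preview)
Your proof is correct and essentially identical to the paper's: both identify a basis of $(B_aM(d))_S$ with pairs of injections $(f\colon [a]\inj S,\ g\colon [d]\inj S)$ having disjoint images, fix a bijection $[a]\disjoint[d]\cong[a+d]$ to match this with the basis of $M(a+d)_S$, and verify naturality by the identity $(\phi\circ f)\disjoint(\phi\circ g)=\phi\circ(f\disjoint g)$.
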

\begin{proof}
Given an injection $f\colon[a]\inj S$, a basis for the summand $(B_aM(d))_{S,f}=M(d)_{S-f([a])}$ consists of the injections $f'\colon [d]\inj S-f([a])$. Therefore a basis for $(B_aM(d))_S$ consists of pairs $(f\colon [a]\inj S, f'\colon [d]\inj S)$ with $f([a])\cap f'([d])=\emptyset$. Fixing an isomorphism $[a]\disjoint [d]\simeq [a+d]$, the isomorphism $B_aM(d)\to M(a+d)$ is defined by sending $(f,f')\in B_aM(d)_S$ to $f\disjoint f'\colon [a+d]\inj S$. An injection $g\colon S\inj T$ acts on $B_aM(d)$ by $g_*(f,f')=(g\circ f,g\circ f')$, so since $(g\circ f)\disjoint (g\circ f')=g\circ(f\disjoint f')$, we indeed have an isomorphism of FI-modules $B_aM(d)\simeq M(a+d)$.
\end{proof}

Since $B_a$ is exact, it follows from Lemma~\ref{lem:BaMd} that if $V$ is generated in degree $\leq d$, then $B_aV$ is generated in degree $\leq a+d$. In particular, we have the following corollary.

\begin{corollary}
\label{cor:BVfg}
If $V$ is finitely generated, then $B_a V$ is finitely generated for any $a\geq 0$.
\end{corollary}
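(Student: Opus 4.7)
The plan is to leverage the combination of Lemma~\ref{lem:BaMd} (which identifies $B_a M(d) \cong M(a+d)$), the exactness of $B_a$ noted in Definition~\ref{def:ordnegshift}, and the characterization of finite generation via surjections from free FI-modules in Proposition~\ref{proposition:fg}. The argument is essentially a formal consequence of these three ingredients and should be very short; there is no real obstacle here, only bookkeeping.

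First, I would invoke Proposition~\ref{proposition:fg}(1) to choose a surjection $\bigoplus_{i=1}^k M(d_i) \surj V$ with finitely many summands. Next, since $B_a$ is exact (Definition~\ref{def:ordnegshift}), applying $B_a$ to this surjection yields a surjection $B_a\bigl(\bigoplus_{i=1}^k M(d_i)\bigr) \surj B_a V$. I would then observe that $B_a$ commutes with finite direct sums (either directly from the formula \eqref{eq:tempnegshift}, since direct sum and indexed direct sum over $f\colon [a]\inj S$ commute, or simply because $B_a$ is exact and additive). This gives a surjection $\bigoplus_{i=1}^k B_a M(d_i) \surj B_a V$.

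Finally, Lemma~\ref{lem:BaMd} identifies each summand as $B_a M(d_i) \cong M(a+d_i)$, producing a surjection $\bigoplus_{i=1}^k M(a+d_i) \surj B_a V$ from a finite direct sum of free FI-modules. Applying the reverse direction of Proposition~\ref{proposition:fg}(1) then shows that $B_a V$ is finitely generated, in fact generated in degree $\leq a + \max_i d_i$. This refined degree bound also recovers the statement in the preceding paragraph of the paper, namely that if $V$ is generated in degree $\leq d$ then $B_a V$ is generated in degree $\leq a + d$, which is precisely the content the corollary packages for later use.
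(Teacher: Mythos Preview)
Your proof is correct and is exactly the argument the paper has in mind: the paper states the corollary as an immediate consequence of the sentence preceding it, which combines the exactness of $B_a$ with Lemma~\ref{lem:BaMd} to deduce that $V$ generated in degree $\leq d$ implies $B_a V$ generated in degree $\leq a+d$. You have simply spelled out the details of that deduction via Proposition~\ref{proposition:fg}.
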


\para{The complex $B_\ast V$}
We can package all the $B_a V$ together as a single object $B_V$. Given an FI-module $V$, let  $B_V$ be the functor $B_V\colon\FI^{\op}\times \FI\to \RMod$ defined by:
\beq
B_V(U,S) = \bigoplus_{f\colon U \inj S} V_{S-f(U)}
\eeq
Write $B_V(U,S,f)$ for the corresponding factor of $B_V(U,S)$. For a morphism $g\in \Hom_{\FI}(S,T)$, the map $g_*\colon B_V(U,S)\to B_V(U,T)$ is given on factors $B_V(U,S,f)\to B_V(U,T,g\circ f)$ as described in Definition~\ref{def:ordnegshift}. To a morphism $h\in \Hom_{\FI^{\op}}(U,Z)$, i.e.\ an injection $h\colon Z\inj U$, we associate the map $h^*\colon B_V(U,S)\to B_V(Z,S)$ is given on factors $B_V(U,S,f)\to B_V(Z,S,f|_Z)$ by the map map $i_*\colon V_{S-f(U)}\to V_{S-f(Z)}$, where $i$ denotes the inclusion of the subset $S-f(U)$ into $S-f(Z)$.
Then $B_a V$ is the FI-module $B_V([a],{-})$.

For $1\leq i\leq a$, let $s_i$ be the order-preserving inclusion from $[a-1]$ to $[a]$ whose image misses $i$. Considering $s_i$ as a morphism in $\FI^{\op}$ from $[a]$ to $[a-1]$, it naturally induces a map of FI-modules $B_V([a],{-})\to B_V([a-1],{-})$, i.e. a map $d_i\colon B_a V\to B_{a-1} V$. 
  The functors $B_a V$ fit together into a natural complex of FI-modules
\begin{equation}
\label{eq:Bcomplex}
B_{*}V\coloneq\qquad \cdots\to B_{3} V\to B_{2} V\to B_{1}V\to V\to 0
\end{equation}
with differential $d\colon B_{a}V\to B_{(a-1)}V$ given by the alternating sum $\sum (-1)^i d_i$.
  The familiar identity $s_i\circ s_j=s_{j+1}\circ s_i$ of inclusions $[a-2]\inj [a]$ (for $1\leq i\leq j<a$) implies that $d_j\circ d_i=d_i\circ d_{j+1}$, from which it follows that $d^2=0$.

\para{Twisted negative shifts $\tSminus{a}$}
We defined $B_V\colon \FI^{\op}\times \FI\to \RMod$ above, and noted that $B_aV$ is given by $B_V([a],{-})$. Therefore the group of automorphisms  $\Aut_{\FI^{\op}}([a])$  gives a natural action of $S_a$ on $B_aV$ by FI-module automorphisms; explicitly, this action permutes the factors $(B_aV)_{S,f}$ by precomposing the injections $f\colon [a]\inj S$ with permutations of $[a]$. 

\begin{definition}[\textbf{Negative shift functor $\tSminus{a}$}]
\label{def:negshift}
Let $\epsilon_a$ denote the \emph{sign representation} of $S_a$; that is, the $R[S_a]$-module which as an $R$-module is simply $R$, and on which a permutation $\sigma$ acts by $(-1)^\sigma$.
We define
\beq
\tSminus{a} V = B_a V \tensor_{R[S_a]} \epsilon_a.
\eeq
\end{definition}

The effect is that $(\tSminus{a}V)_S$ has one summand $V_T$ for each subset $T\subset S$ with $\abs{T}=\abs{S}-a$, on which permutations of $T$ act as they usually do on $V_T$, and on which permutations of $S-T$ act by their sign. The surjection $R[S_a]\twoheadrightarrow \epsilon_a$ induces a surjection $B_a V\twoheadrightarrow \tSminus{a} V$, so as a consequence of Corollary~\ref{cor:BVfg} we obtain the following.
\begin{lemma}
\label{lem:shiftfg}
If $V$ is finitely generated, then $\tSminus{a}V$ is finitely generated for any $a\geq 1$.
\end{lemma}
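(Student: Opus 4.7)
The plan is to observe that $\tSminus{a}V$ is a quotient of $B_aV$ and invoke Corollary~\ref{cor:BVfg} together with the fact that quotients of finitely generated FI-modules are finitely generated. Concretely, I would first note that the sign representation $\epsilon_a$ is a cyclic $R[S_a]$-module, so there is a surjection of $R[S_a]$-modules $R[S_a]\twoheadrightarrow \epsilon_a$ (sending $1\in R[S_a]$ to a generator of $\epsilon_a$). Tensoring this surjection on the left with $B_aV$ over $R[S_a]$, and using the canonical isomorphism $B_aV\otimes_{R[S_a]} R[S_a]\cong B_aV$, yields a surjection of FI-modules
\[B_aV \twoheadrightarrow B_aV\otimes_{R[S_a]}\epsilon_a = \tSminus{a}V.\]
(Since right-tensoring is a right-exact functor on right $R[S_a]$-modules and the construction is pointwise on $S$, this is indeed a surjection in $\FIMod$.)

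Next, I would apply Corollary~\ref{cor:BVfg}: since $V$ is finitely generated, so is $B_aV$. Finally, by Proposition~\ref{proposition:fg} (and the remark immediately afterward that any quotient of a finitely generated FI-module is finitely generated), any surjective image of a finitely generated FI-module is finitely generated. Applying this to the surjection $B_aV\twoheadrightarrow \tSminus{a}V$ displayed above gives that $\tSminus{a}V$ is finitely generated, as required.

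There is no real obstacle here; the statement is essentially a formal consequence of three facts already established earlier in the section. The only very minor point worth double-checking is that the tensor product with $\epsilon_a$ is computed pointwise and thus commutes with the FI-module structure, so that the resulting surjection is actually a morphism of FI-modules rather than merely a pointwise surjection of $R$-modules — but this is immediate from the definitions of $B_aV$ and $\tSminus{a}V$, since the FI-structure on $B_aV$ is $S_a$-equivariant (the $S_a$-action comes from precomposing the injections $f\colon[a]\inj S$ with permutations, which commutes with the pushforwards $g_*$ induced by $g\colon S\inj T$).
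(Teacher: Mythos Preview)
Your proof is correct and follows exactly the same approach as the paper: the paper observes just before the lemma that the surjection $R[S_a]\twoheadrightarrow \epsilon_a$ induces a surjection $B_aV\twoheadrightarrow \tSminus{a}V$, and then deduces the lemma immediately from Corollary~\ref{cor:BVfg}. Your version simply spells out the details more carefully.
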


\para{The complex $\tSminus{*}V$}
Since the differential $d\colon B_aV\to B_{a-1}V$ was defined as the alternating sum $\sum (-1)^i d_i$, it descends to a differential $d\colon\tSminus{a}V\to \tSminus{(a-1)}V$. We thus obtain a natural complex of FI-modules
\begin{equation}
\label{eq:shiftcomplex}
\tSminus{*}V\coloneq\qquad \cdots\to \tSminus{3} V\to \tSminus{2} V\to \tSminus{1}V\to V\to 0.
\end{equation}

\begin{remark} In a sequel to this paper~\cite{CE} we interpret the homology $H_a(\tSminus{*}V)$ of this complex as the ``FI-module homology'' of $V$, and use this to give quantitative bounds on the stable range in Theorems~\ref{thm:congruencepoly} and \ref{thm:congruenceinductive}. We point out that the same complex was considered independently by Putman~\cite{P}, where the degree-wise slices $(\tSminus{*}V)_M$ appear as the ``$M$-central stability chain complex'' \cite[Lemma 4.4]{P}  in the context of central stability for representations of $S_n$ over a field.

In fact, the complex $\tSminus{*}V$ seems to arise naturally from \emph{three} independent perspectives: 1) as the ``central stability chain complex'' which governs the central stability of a sequence of $S_n$-representations \cite{P}; 2) as the Koszul resolution which computes the FI-module homology $H_i^{\FI}$ of an FI-module $V$ \cite{CE}; and 3) from the equivariant chains of the complex of split unimodular sequences constructed by Charney in \cite{Charney}. The approach of Putman~\cite{P} rests on the relation between the first and third, while the approach of \cite{CE} is based on the second and third.

In a sense, the present paper uses all three perspectives on the complex $\tSminus{*}V$: the first in Lemmas~\ref{lem:H0identify} and \ref{lem:H1identify} and the proof of Theorem~\ref{thm:inductive}, the second in Proposition~\ref{prop:homologygraded}, and the third in the proof of Theorem~\ref{thm:congruencefg}. However, to keep this paper self-contained, we will derive all necessary properties of $H_a(\tSminus{*}V)$ from their definition in terms of the complex $\tSminus{*}V$ of \eqref{eq:shiftcomplex}. 
\end{remark}

\para{Identifying $H_0(\tSminus{*}V)$ and $H_1(\tSminus{*}V)$}
For a finite set $T$, let $C^{\subseteq T}$ denote the poset of subsets $S\subset T$ under inclusion. We can consider $C^{\subseteq T}$ as a subcategory of $\FI$, and in fact the inclusions $i^S\colon S\inj T$ let us consider $C^{\subseteq T}$ as a subcategory of the over-category $\FI/T$. Therefore for any FI-module $V$ and any sub-poset $D\subset C^{\subseteq T}$, we have a $D$-indexed diagram $V_S$, and the maps $i^S_*\colon V_S\to V_T$ induce a canonical homomorphism \[\colim_D V_S\to V_T.\]
In general, this homomorphism of $R$-modules will be neither injective or surjective. However, the following lemmas demonstrate that when $D=C^{\subsetneq T}$, the injectivity and surjectivity of the homomorphism
\begin{equation}
\label{eq:colimmap}
\colim_{S\subsetneq T}V_S\to V_T
\end{equation}
are computed by $H_1(\tSminus{*}V)$ and $H_0(\tSminus{*}V)$ respectively.
\begin{lemma}
\label{lem:H0identify}
Let $V$ be an FI-module. Then $H_0(\tSminus{\ast}V)=H_0(V)$; moreover, for each finite set $T$ we have \[H_0(\tSminus{\ast}V)_T=\coker\big(\colim_{S\subsetneq T}V_S\to V_T\big)=H_0(V)_T.\]
\end{lemma}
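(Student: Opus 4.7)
The plan is to identify all three objects in the chain of equalities with a common quotient of $V_T$, namely
\[
V_T \,\big/\, \sum_{t \in T} \im\!\big(\iota_{T,t}\colon V_{T-\{t\}} \to V_T\big),
\]
where $\iota_{T,t}$ is induced by the inclusion $T - \{t\} \inj T$. All identifications will be natural in $T$, which yields the FI-module equality $H_0(\tSminus{\ast}V) = H_0(V)$.

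First I would unwind the definition of the complex in degrees $0$ and $1$. Since $S_1$ is trivial and $\epsilon_1 \cong R$, we have $\tSminus{1}V = B_1V$, and by Definition~\ref{def:ordnegshift}
\[
(\tSminus{1}V)_T = (B_1V)_T = \bigoplus_{f\colon [1]\inj T} V_{T-f([1])} = \bigoplus_{t\in T} V_{T-\{t\}}.
\]
The only nonzero face map out of $B_1V$ is $d_1$, which comes from the inclusion $s_1\colon [0]\inj [1]$ viewed in $\FI^{\op}$. Tracing through Definition~\ref{def:ordnegshift}, its restriction to the summand indexed by $t$ is exactly $\iota_{T,t}$. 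Hence the differential $d\colon \tSminus{1}V\to V$ at $T$ is $\sum_t \iota_{T,t}$, and
\[
H_0(\tSminus{\ast}V)_T = V_T \,\big/\, \sum_{t\in T} \im\,\iota_{T,t}.
\]

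Next I would compare this denominator with that in $H_0(V)_T$ from Definition~\ref{def:H0}. Every $\iota_{T,t}$ is of the form $f_*$ for an injection from a set of smaller cardinality, so $\sum_t \im\,\iota_{T,t} \subseteq \langle \im(f_*) : f\colon T'\inj T,\ \abs{T'}<\abs{T}\rangle$. Conversely, given such an $f\colon T'\inj T$, since $\abs{T'}<\abs{T}$ one may choose some $t\in T\setminus f(T')$; then $f$ factors as $T'\inj T-\{t\}\inj T$, so $\im f_* \subseteq \im\,\iota_{T,t}$. This yields the reverse inclusion and gives $H_0(\tSminus{\ast}V)_T = H_0(V)_T$.

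Finally I would compute the image of $\colim_{S\subsetneq T} V_S \to V_T$: since the colimit surjects onto this image, the image is exactly $\sum_{S\subsetneq T}\im(V_S\to V_T)$. Any proper subset $S\subsetneq T$ is contained in some $T-\{t\}$ (choose $t\in T\setminus S$), so $\im(V_S\to V_T)\subseteq \im\,\iota_{T,t}$; conversely each $T-\{t\}$ is itself a proper subset. Therefore this image coincides with $\sum_t \im\,\iota_{T,t}$, and taking cokernels gives the middle equality. All three identifications are natural in $T$ (e.g.\ for the first, because the differential $\tSminus{1}V\to V$ is a map of FI-modules), so the isomorphism upgrades to one of FI-modules. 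The one thing to check at the end is the trivial edge case $T=\emptyset$: there the colimit is empty hence $0$, $(\tSminus{1}V)_\emptyset=0$, and the denominator in $H_0(V)_\emptyset$ is empty, so all three expressions agree with $V_\emptyset$. There is no real obstacle here; the entire argument is essentially a bookkeeping exercise, and the only place where care is needed is in matching the single face map $d_1$ with the inclusion $\iota_{T,t}$ via Definition~\ref{def:ordnegshift}.
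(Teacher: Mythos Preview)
Your proposal is correct and follows essentially the same approach as the paper: both identify each of the three objects as the quotient of $V_T$ by the span of the images of the inclusions from codimension-one subsets, and both verify the needed equalities of submodules by factoring an arbitrary injection $f\colon T'\inj T$ with $\abs{T'}<\abs{T}$ through some $T-\{t\}$. The only cosmetic difference is that the paper indexes by subsets $S\subset T$ with $\abs{S}=\abs{T}-1$ while you index by elements $t\in T$; also note that with the paper's sign convention $d=\sum(-1)^i d_i$ the differential $\tSminus{1}V\to V$ is $-\sum_t\iota_{T,t}$ rather than $+\sum_t\iota_{T,t}$, though of course this does not affect the cokernel.
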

\begin{proof}
Comparing the definitions of the three $R$-modules in question, we have by definition that $H_0(\tSminus{\ast}V)_T=\coker(\tSminus{1}V\to V)_T$ is the quotient of $V_T$ by the submodule
\[\big\langle \im i^S_*\colon V_S\to V_T\,\big|\, S\subset T,\ \abs{S}=\abs{T}-1\big\rangle,\]
while $\coker\big(\colim_{S\subsetneq T}V_S\to V_T\big)$ is the quotient of $V_T$ by the submodule
\[\big\langle \im i^S_*\colon V_S\to V_T\,\big|\, S\subset T,\ \abs{S}<\abs{T}\big\rangle,\]
and $H_0(V)_T$ is the quotient of $V_T$ by the submodule
\[\big\langle \im f_*\colon V_U\to V_T\,\big|\, f\colon U\inj T,\ \abs{U}< \abs{T}\big\rangle.\]
Therefore it suffices to prove that these three submodules coincide.

By definition, the first submodule is contained in the second, and the second submodule is contained in the third. Conversely, for any  $f\colon U\inj T$ with $\abs{U}<\abs{T}$, set $S=f(U)$. Then $f$ factors as $i^S\circ f'$, where $f'\colon U\to f(U)=S$ is the co-restriction of $f$. It follows that $\im f_*$ is contained in $\im i^S_*$, demonstrating that the third submodule is contained in the second. Similarly, for any $S\subset T$ with $\abs{S}<\abs{T}$, choose $S'$ such that $S\subset S'\subset T$ and $\abs{S'}=\abs{T}-1$. Then $i^S=i^{S'}\circ i^{S,S'}$, demonstrating that the second submodule is contained in the first.  
\end{proof}

\begin{lemma}
\label{lem:H1identify}
Let $V$ be an FI-module. For each finite set $T$,
\[H_1(\tSminus{*}V)_T=\ker\big(\colim_{S\subsetneq T}V_S\to V_T\big).\]
\end{lemma}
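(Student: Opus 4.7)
The plan is to factor the canonical map $\beta \colon \colim_{S \subsetneq T} V_S \to V_T$ through $(\tSminus{1}V)_T$ and reduce the lemma to identifying the kernel of an intermediate surjection. Define $\alpha \colon (\tSminus{1}V)_T \surj \colim_{S \subsetneq T} V_S$ on the summand $V_{T-\{t\}}$ of $(\tSminus{1}V)_T$ by sending $v$ to its class $[v]$ in the colimit. This is surjective because every proper subset $S \subsetneq T$ is contained in some $(|T|-1)$-subset, so every class in the colimit is represented by an element of some $V_{T-\{t\}}$. Moreover, $\beta \circ \alpha$ agrees (up to a sign) with the differential $d_1 \colon (\tSminus{1}V)_T \to V_T$, so in particular $\ker(\alpha) \subseteq \ker(d_1)$. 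Using surjectivity of $\alpha$, we then obtain $\ker(\beta) = \ker(d_1)/\ker(\alpha)$, so the lemma (which asserts that $H_1(\tSminus{*}V)_T = \ker(d_1)/\im(d_2)$ equals $\ker(\beta)$) reduces to the claim $\im(d_2) = \ker(\alpha)$.

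The inclusion $\im(d_2) \subseteq \ker(\alpha)$ follows by direct computation from the definitions of $\tSminus{2}V = B_2V \otimes_{R[S_2]} \epsilon_2$ and the differential $d$. For $U \subsetneq T$ with $|U| = |T|-2$ and $T-U = \{t_1, t_2\}$, one checks that $d_2$ sends $v \in V_U$ to $\pm\bigl(i^{U, U \cup \{t_1\}}_*(v) - i^{U, U \cup \{t_2\}}_*(v)\bigr)$ in $(\tSminus{1}V)_T$. Since each of $i^{U,U\cup\{t_i\}}_*(v)$ has class $[v]$ in $\colim_{S\subsetneq T} V_S$ (because $i^{U,U\cup\{t_i\}}$ is a morphism in $C^{\subsetneq T}$), this gives $\alpha(d_2 v) = 0$.

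The main step is the reverse inclusion $\ker(\alpha) \subseteq \im(d_2)$, which I would establish by constructing an inverse to $\alpha$ modulo $\im(d_2)$. The plan is to show that the quotient $(\tSminus{1}V)_T / \im(d_2)$ receives compatible maps from all $V_S$ with $S \subsetneq T$; by the universal property of the colimit, this will produce a natural map $\colim_{S \subsetneq T} V_S \to (\tSminus{1}V)_T/\im(d_2)$ inverse to the quotient of $\alpha$. For each $S \subsetneq T$, define the map $V_S \to (\tSminus{1}V)_T/\im(d_2)$ by sending $v \in V_S$ to the class of $i^{S, S'}_*(v)$ for any chosen $(|T|-1)$-subset $S' \supset S$; when $|S| = |T|-1$ this is the tautological inclusion of a summand. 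The key point is that this is independent of the choice of $S'$: given two choices $S' \neq S''$, the intersection $U \coloneq S' \cap S''$ has exactly $|T|-2$ elements and contains $S$, and setting $v' \coloneq i^{S, U}_*(v) \in V_U$, the difference $i^{U, S'}_*(v') - i^{U, S''}_*(v')$ is exactly $\pm d_2(v')$, hence vanishes modulo $\im(d_2)$. Compatibility with inclusions $S_1 \subset S_2 \subsetneq T$ is then immediate by choosing a common $(|T|-1)$-superset. This constructs the required section, yielding $\ker(\alpha) = \im(d_2)$ and completing the proof.
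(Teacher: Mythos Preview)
Your proof is correct and reaches the same conclusion as the paper's --- namely that $\colim_{S\subsetneq T}V_S$ is naturally identified with $\coker\big(d\colon(\tSminus{2}V)_T\to(\tSminus{1}V)_T\big)$ --- but the route is genuinely different. The paper proves this by showing that the inclusion of the subposet $D^T=\{S\subset T: \abs{T}-2\leq\abs{S}\leq\abs{T}-1\}$ into $C^{\subsetneq T}$ is a \emph{final} functor (citing the standard criterion from Riehl), and then applying the coequalizer formula for a colimit over $D^T$. You instead verify the universal property of the colimit directly: you build a map $\gamma$ from $\colim_{S\subsetneq T}V_S$ to $(\tSminus{1}V)_T/\im(d_2)$ by choosing for each $S$ an $(\abs{T}-1)$-superset $S'$, and check well-definedness via the observation that any two such choices $S',S''$ meet in a set of size exactly $\abs{T}-2$, so their difference is a boundary. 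Your approach is more elementary and self-contained (no appeal to finality), and the well-definedness check is in fact doing the same work as the connectedness of $U/D^T$ in the paper's finality argument, just packaged without the categorical language. The paper's version has the advantage of making the role of the subposet $D^T$ explicit, which clarifies why only the top two layers of $C^{\subsetneq T}$ matter.
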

\begin{proof}
Let $C^{\subsetneq T}$ be the poset of proper subsets $S\subsetneq T$ under inclusion, and let $D^T$ be the poset of subsets $S\subset T$ with $\abs{T}-2\leq \abs{S}\leq \abs{T}-1$. We begin by observing that the inclusion of categories $D^T\subset C^{\subsetneq T}$ is \emph{final}, which means that for any $C^{\subsetneq T}$-indexed diagram $F$, the natural map
\[\colim_{D^T} F\to \colim_{C^{\subsetneq T}} F\] is an isomorphism \cite[Definition~8.3.2]{Riehl}.

By the standard characterization of final functors (see Riehl~\cite[Lemma~8.3.4]{Riehl}), the inclusion $D^T\subset C^{\subsetneq T}$ is final if and only if for every object $U\in C^{\subsetneq T}$, the under-category $U/D^T$ is non-empty and connected.
In our case, $U$ is a proper subset $U\subsetneq T$, and $U/D^T$ is simply the poset of subsets $S$ such that $U\subset S\subset T$ and $\abs{T}-2\leq \abs{S}\leq \abs{T}-1$. If $U$ lies in $D^T$, it is initial in $U/D^T$, so $U/D^T$ is not just connected but contractible. Otherwise, since $U\subsetneq T$, there exists some $S_0\supset U$ with $\abs{S_0}=\abs{T}-2$, so $U/D^T$ is nonempty. For any other $S\in U/D^T$ with $\abs{S\Delta S_0}\leq 2k$, there exists a chain $S_0\subset S'_0\supset S_1\subset \cdots\supset S_k\subset S$ in $U/D^T$ with $\abs{S_i}=\abs{T}-2$ and $\abs{S'_i}=\abs{T}-1$, so $U/D^T$ is connected. Therefore $D^T\subset C^{\subsetneq T}$ is final as claimed, and so $\colim_{S\subsetneq T}V_S$ can be computed instead as $\colim_{S\in D^T} V_S$.

Consider the standard coequalizer formula for the colimit over $D^T$:
\begin{equation}
\label{eq:coequalizer}
\colim_{S\in D^T}V_S=\bigoplus_{S\in D^T} V_S\ /\ \big\langle u-f_*(u)\,\big|\,f\in \Hom_{D^T}(U,S), u\in V_U\big\rangle
\end{equation}
Since $u-\id_*u=0$, we can restrict in \eqref{eq:coequalizer} to non-identity morphisms $f\in \Hom_{D^T}(U,S)$. Such morphisms exist only when $\abs{U}=\abs{T}-2$, in which case there exist precisely two subsets $S^1_U,S^2_U$ for which there exist non-identity morphisms $i^1_U\colon U \to S^1_U$ and $i^2_U\colon U\to S^2_U$. Using the relations $u\equiv (i^2_U)_*(u)$ we can remove those $U\in D^T$ with $\abs{U}=\abs{T}-2$ from the sum \eqref{eq:coequalizer}, reducing it to
\begin{align*}
\colim_{S\in D^T}V_S &= \bigoplus_{\substack{S\subset T\\\abs{S}=\abs{T}-1}}V_S\ /\ \big\langle (i^1_U)_*(u)-(i^2_U)_*(u)\,|\, U\subset T, \abs{U}=\abs{T}-2,u\in V_U\big\rangle\\
&= (\tSminus{1}V)_T / \im(d\colon (\tSminus{2}V)_T\to (\tSminus{1}V)_T) = \coker(d\colon \tSminus{2}V\to \tSminus{1}V)_T
\end{align*}
By definition, $H_1(\tSminus{*}V)_T$ is the kernel of the map $\coker(d\colon \tSminus{2}V\to \tSminus{1}V)_T
\to V_T$  induced by $d\colon \tSminus{1}V\to \tSminus {0}V=V$. Since $d$ sends the factor $V_S$ of $\tSminus{1}V$ to $V_T$ by $i^S_*$, this induced map coincides with the universal map $\colim_{S\in D^T}V_S\to V_T$ of \eqref{eq:colimmap}. Since $\colim_{S\subsetneq T}V_S=\colim_{S\in D^T}V_S$, we conclude that $H_1(\tSminus{*}V)_T=\ker(\colim_{S\in D^T}V_S\to V_T)$ as claimed.\end{proof}

Since $T$ is terminal in the poset $C^{\subseteq T}$, the map $\colim_{S\subseteq T} V_S\to V_T$ is always an isomorphism. Therefore as a consequence of Lemmas~\ref{lem:H0identify} and \ref{lem:H1identify}, we have the following corollary.
\begin{corollary}
\label{cor:H0H1}
Let $V$ be an FI-module. Then for each finite set $T$,
\[H_0(\tSminus{*}V)_T=0\text{ and }H_1(\tSminus{*}V)_T=0\qquad\iff\qquad \colim_{S\subsetneq T}V_S=V_T.\]
\end{corollary}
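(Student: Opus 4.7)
The plan is to observe that the statement $\colim_{S\subsetneq T}V_S=V_T$ should be read as asserting that the canonical comparison map $\varphi_T\colon \colim_{S\subsetneq T}V_S\to V_T$ (induced by the inclusions $i^S\colon S\inj T$ for $S\subsetneq T$, as discussed just before Lemma~\ref{lem:H0identify}) is an isomorphism. A homomorphism of $R$-modules is an isomorphism if and only if both its kernel and its cokernel vanish, so the corollary will follow immediately once these two obstructions are identified with the homology modules in question.

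More concretely, I would proceed as follows. First, note that $\varphi_T$ is surjective if and only if $\coker\varphi_T=0$; by Lemma~\ref{lem:H0identify} we have $\coker\varphi_T=H_0(\tSminus{*}V)_T$, so surjectivity of $\varphi_T$ is equivalent to $H_0(\tSminus{*}V)_T=0$. Second, $\varphi_T$ is injective if and only if $\ker\varphi_T=0$; by Lemma~\ref{lem:H1identify} we have $\ker\varphi_T=H_1(\tSminus{*}V)_T$, so injectivity of $\varphi_T$ is equivalent to $H_1(\tSminus{*}V)_T=0$. Combining these two equivalences gives the corollary.

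Strictly speaking, there is essentially no obstacle to overcome here, since both Lemmas~\ref{lem:H0identify} and \ref{lem:H1identify} have already done all of the real work. The only subtlety worth mentioning explicitly is the interpretation of the equality in the statement: it refers to the structure map $\varphi_T$ being an isomorphism, not merely to an abstract isomorphism of $R$-modules. Once this is clarified, the argument collapses to the two-sentence deduction above, and no further computation is required.
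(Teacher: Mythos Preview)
Your proposal is correct and matches the paper's own approach exactly: the corollary is stated as an immediate consequence of Lemmas~\ref{lem:H0identify} and \ref{lem:H1identify}, identifying the cokernel and kernel of the canonical map $\colim_{S\subsetneq T}V_S\to V_T$ with $H_0(\tSminus{*}V)_T$ and $H_1(\tSminus{*}V)_T$ respectively. Your remark that the equality must be read as asserting that the canonical comparison map is an isomorphism is precisely the interpretive point the paper makes as well.
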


\para{The homology $H_a(\tSminus{*}V)$}
The FI-module $H_0(V)$ has the property that the natural map $\X_1\colon H_0(V)\to S_{+1} H_0(V)$ {\em vanishes}; in fact, from Definition~\ref{def:H0} we see that $H_0(V)$ is the largest quotient of $V$ with this property. In particular, $H_0(V)$ is a torsion FI-module.  The main content of the following proposition is that the homology groups $H_a(\tSminus{*}V)$ enjoy the same property for every $a$.

\begin{proposition}
\label{prop:homologygraded}
Let $V$ be an FI-module. Then $H_a(\tSminus{*}V)$ is a torsion FI-module for any $a \geq 0$.  If $V$ is a finitely-generated FI-module over a Noetherian ring, we have furthermore that $H_a(\tSminus{*}V)\approxeq 0$ for each $a\geq 0$.
\end{proposition}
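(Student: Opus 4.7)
The plan is to prove the torsion assertion by induction on $a$, via a vanishing computation for free FI-modules combined with the long exact sequence in homology attached to a free presentation of $V$; the ``$\approxeq 0$'' statement will then follow from this combined with Theorem~\ref{thm:noetherian} and Lemma~\ref{lem:torsion}.

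The key input is that $H_a(\tSminus{*}F)=0$ for every $a\geq 1$ whenever $F$ is a free FI-module.  Since the $S_a$-action on injections $[a]\inj S$ is free with orbits recorded by the image $f([a])$, forming coinvariants against $\epsilon_a$ gives the pointwise identification
\[(\tSminus{a}V)_S\;\cong\;\bigoplus_{S_0\subset S,\,\abs{S_0}=a}V_{S-S_0}\]
as $R$-modules, so in particular $\tSminus{a}$ is exact and commutes with direct sums.  By additivity, I only need to handle $F=M(d)$, and there the complex $\tSminus{*}M(d)(T)$ decomposes as a direct sum, indexed by injections $g\colon[d]\inj T$, of the (shifted) augmented simplicial chain complex of the full simplex on the vertex set $T-g([d])$, with the Koszul alternating-sum boundary.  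If $T-g([d])$ is empty the subcomplex is concentrated in degree $0$; otherwise the simplex is contractible and the subcomplex is exact.  Either way $H_a$ of each $g$-summand vanishes for $a\geq 1$, and summing gives the claim.

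Given this, I would induct on $a$.  The case $a=0$ is $H_0(\tSminus{*}V)=H_0(V)$ by Lemma~\ref{lem:H0identify}, which is torsion because $\X_1\colon H_0(V)\to S_{+1}H_0(V)$ vanishes by the defining property of $H_0$.  For the inductive step with $a\geq 1$, choose any surjection $F\surj V$ from a (possibly infinite) free FI-module, with kernel $K$.  Applying the exact functor $\tSminus{*}$ produces a short exact sequence of complexes, and the associated long exact sequence contains
\[H_a(\tSminus{*}F)\longrightarrow H_a(\tSminus{*}V)\longrightarrow H_{a-1}(\tSminus{*}K).\]
The left term vanishes by the free-module step, so $H_a(\tSminus{*}V)$ embeds into $H_{a-1}(\tSminus{*}K)$, which is torsion by the inductive hypothesis.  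Since a sub-FI-module of a torsion FI-module is torsion, the induction closes.

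Finally, if $V$ is finitely generated over a Noetherian ring, Lemma~\ref{lem:shiftfg} makes each $\tSminus{a}V$ finitely generated, and Theorem~\ref{thm:noetherian} then forces the subquotient $H_a(\tSminus{*}V)$ to be finitely generated.  Combined with being torsion, Lemma~\ref{lem:torsion} gives $H_a(\tSminus{*}V)\approxeq 0$.  The main obstacle is the free-module vanishing: committing to the explicit pointwise model of $\tSminus{a}$ and carefully tracking the signs introduced by $\epsilon_a$ makes the identification with the simplex chain complex transparent, after which contractibility does all the work.
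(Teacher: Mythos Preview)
Your argument is correct, and the final paragraph (finitely-generated case) matches the paper exactly. The torsion assertion, however, you prove by a genuinely different route from the paper.

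The paper constructs an explicit chain homotopy $G\colon \tSminus{a}V\to S_{+1}\tSminus{(a+1)}V$ witnessing that the map $\X_1\colon \tSminus{*}V\to S_{+1}\tSminus{*}V$ is null-homotopic, and hence that $\X_1$ vanishes on every $H_a(\tSminus{*}V)$. This is a direct, uniform argument that works for all $a$ simultaneously and in fact yields the slightly stronger conclusion that $\X_1=0$ on homology (not merely that the homology is torsion). Your approach instead first verifies acyclicity of free modules by identifying $(\tSminus{*}M(d))_T$ with a direct sum of augmented simplex chain complexes, and then dimension-shifts via the long exact sequence of a free presentation. This is the ``derived functor'' perspective: you are effectively showing that $H_a(\tSminus{*}{-})$ is the $a$th left derived functor of $H_0$, which is precisely the FI-homology viewpoint the paper alludes to in its remark on the three perspectives on $\tSminus{*}V$. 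Your method is more structural and makes the Koszul-resolution interpretation transparent, at the cost of requiring an induction on $a$ and not directly yielding $\X_1=0$; the paper's chain homotopy is shorter and sharper but more ad hoc.
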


\begin{proof}   
We begin by proving that $H_a(\tSminus{*}V)$ is torsion; in fact, we will prove the stronger assertion that the map $\X_1\colon H_a(\tSminus{*}V)\to S_{+1}H_a(\tSminus{*}V)$ is zero for all $a\geq 0$.
The naturality of $\X_1$ implies that this map is induced by a map of FI-complexes
\beq
\X_1\colon \tSminus{*}V \to S_{+1} \tSminus{*}V.
\eeq
We will show that $X_1$ induces the zero map on homology by exhibiting an explicit chain homotopy from $X_1$ to 0.
  
If $f\colon [a] \inj S$ is an injection of finite sets, we let $\overline{f}\colon [a+1]\inj S \disjoint \set{-1}$ be the map defined 
by 
\[\overline{f}(i)=\begin{cases}-1&\text{ if }i=1\\f(i-1)&\text{ otherwise}\end{cases}\]
We then define $\widetilde{G}\colon B_a V \ra S_{+1} B_{a+1} V$ by \[\widetilde{G}\colon B_V([a],S,f)=V_{S - f([a])}\overset{=}{\longrightarrow}V_{S\disjoint [-1] - \overline{f}([a+1])}= B_V([a+1], S \disjoint \set{-1}, \overline{f}).\]   We have
\beq
d\widetilde{G}\colon B_V([a],S,f) \ra B_V([a+1], S \disjoint \set{-1}, \overline{f}) \overset{\bigoplus (-1)^i d_i}{\longrightarrow}\bigoplus_{i=1}^{a+1} B_V([a], S \disjoint \set{-1}, \overline{f} \circ s_i)
\eeq
and
\beq
\widetilde{G}d\colon B_V([a],S,f) \ra \bigoplus_{i=1}^a B_V([a-1],S,f \circ s_i) \overset{\bigoplus (-1)^i d_i}{\longrightarrow}\bigoplus_{i=1}^{a} B_V([a], S \disjoint \set{-1}, \overline{f \circ s_i})
\eeq
where the summands labeled by $i$ are twisted by $(-1)^i$ coming from $d=\sum (-1)^i d_i$.

We have the identity $\overline{f \circ s_i} = \overline{f} \circ s_{i+1}$ for $1\leq i\leq a$, so in the sum $d\widetilde{G}+\widetilde{G}d$ these terms cancel, leaving us with the map
\beq
d\widetilde{G}+\widetilde{G}d\colon B_V([a],S,f) \ra B_V([a], S \disjoint \set{-1}, \overline{f} \circ s_{1})
\eeq
But $\overline{f} \circ s_{1}\colon [a] \inj S \disjoint \set{-1}$ is just the composition $\overline{f} \circ s_{1}=i_{[-1]}\circ f$ of $f\colon [a]\inj S$ with the natural inclusion $i_{[-1]}\colon S\inj S \disjoint \set{-1}$. By definition, $X_1$ is the map induced by $i_{[-1]}$, so we conclude that \[d\widetilde{G} + \widetilde{G}d = (-1)^1 X_1=-X_1\colon B_aV\to S_{+1}B_{a+1}V.\]

Since the inclusion of $S_a$ into $S_{a+1}$ defined by $[a]\inj [a]\disjoint [-1]$ preserves $(-1)^\sigma$, the map $\widetilde{G}$ descends to a map $G\colon \tSminus{a}V\to S_{+1}\tSminus{(a+1)}V$. The computation above descends to the identity \[dG + Gd = -X_1\colon \tSminus{*}V \to S_{+1} \tSminus{*}V.\] Therefore $G$ exhibits a chain homotopy from $X_1$ to $0$ on $\tSminus{*}V$.
It follows that $X_1\colon H_a(\tSminus{*}V)\to S_{+1}H_a(\tSminus{*}V)$ is 0, and thus in particular that $H_a(\tSminus{*}V)$ is torsion.

Now suppose $V$ is a finitely-generated FI-module over a Noetherian ring.  By Lemma~\ref{lem:shiftfg} we know that $\tSminus{a}V$ is finitely generated, so Theorem~\ref{thm:noetherian} implies that its subquotient $H_a(\tSminus{*}V)$ is finitely generated as well.  Since $H_a(\tSminus{*}V)$ is torsion, this implies that $H_a(\tSminus{*}V) \approxeq 0$ by Lemma~\ref{lem:torsion}.
\end{proof}
\begin{remark}
A version of Proposition~\ref{prop:homologygraded} (with the additional assumptions that $V$ is finitely presented in some sense, and also that $V$ is an FI-module over a field whose characteristic is larger than the location of the ``relations'' of $V$) is proved by Putman in \cite[Proposition~4.5]{P}.
\end{remark}

We are now ready to prove Theorem~\ref{thm:inductive}, whose statement we recall. \begin{theoremC}
Let $V$ be a finitely-generated FI-module over a Noetherian ring $R$. Then there exists some $N\geq 0$ such that for all $n\in \N$:
\begin{equation*}
V_n=\ \colim_{\substack{S\subseteq [n]\\\abs{S}\leq N}}\  V_S
\end{equation*}
\end{theoremC}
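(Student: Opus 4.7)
My plan is to combine the homological tools of this section with an induction on $|T|$. By Proposition~\ref{prop:homologygraded}, since $V$ is a finitely-generated FI-module over a Noetherian ring, both $H_0(\tSminus{*}V)$ and $H_1(\tSminus{*}V)$ satisfy $\approxeq 0$. Choose $N\geq 0$ large enough that $H_0(\tSminus{*}V)_T=0$ and $H_1(\tSminus{*}V)_T=0$ whenever $|T|>N$. By Corollary~\ref{cor:H0H1} this means
\[V_T \;=\; \colim_{S\subsetneq T} V_S \qquad \text{for every finite set } T \text{ with } |T|>N.\]
I claim the same $N$ satisfies the conclusion of the theorem.

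To prove this, I would proceed by strong induction on $|T|$, showing that the canonical map $\colim_{S\subseteq T,\, |S|\leq N} V_S\to V_T$ is an isomorphism for every finite set $T$. If $|T|\leq N$, then $T$ itself lies in the indexing poset as its terminal object, so the colimit equals $V_T$ tautologically. If $|T|>N$, then combining the displayed equation with the inductive hypothesis applied to each proper subset $S\subsetneq T$ (which has $|S|<|T|$) yields
\[V_T \;=\; \colim_{S\subsetneq T} V_S \;=\; \colim_{S\subsetneq T}\left( \colim_{\substack{S'\subseteq S\\ |S'|\leq N}} V_{S'} \right).\]
It remains only to recognize the right-hand iterated colimit as $\colim_{S'\subseteq T,\, |S'|\leq N} V_{S'}$.

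For this final identification, by the Fubini theorem for colimits the iterated colimit is naturally isomorphic to the colimit of $V_{S'}$ over the Grothendieck category $\mathcal{E}$ whose objects are pairs $(S,S')$ with $S'\subseteq S\subsetneq T$ and $|S'|\leq N$, and whose morphisms $(S_1,S'_1)\to(S_2,S'_2)$ require $S_1\subseteq S_2$ and $S'_1\subseteq S'_2$. The projection $\pi\colon \mathcal{E}\to \mathcal{F}$ defined by $(S,S')\mapsto S'$, where $\mathcal{F}=\{S'\subseteq T:|S'|\leq N\}$, induces the canonical comparison map. By \cite[Lemma~8.3.4]{Riehl} it suffices to show $\pi$ is final, i.e.\ that for each $S'\in\mathcal{F}$ the comma category $(S'\downarrow \pi)$ is nonempty and connected. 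Nonemptiness holds because $(S',S')\in(S'\downarrow \pi)$ (using $|S'|\leq N<|T|$ to ensure $S'\subsetneq T$). For connectedness, any two objects $(S''_1,S_1)$ and $(S''_2,S_2)$ are linked by the zigzag
\[(S''_1,S_1) \;\longleftarrow\; (S''_1,S''_1) \;\longleftarrow\; (S''_1\cap S''_2,\,S''_1\cap S''_2) \;\longrightarrow\; (S''_2,S''_2) \;\longrightarrow\; (S''_2,S_2),\]
each intermediate object being valid because $S'\subseteq S''_1\cap S''_2$ and $|S''_1\cap S''_2|\leq N<|T|$. I expect this combinatorial cofinality check to be the only genuinely subtle step; everything else is a direct application of the results already established in this section.
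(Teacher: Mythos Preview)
Your proof is correct and follows essentially the same route as the paper: invoke Proposition~\ref{prop:homologygraded} and Corollary~\ref{cor:H0H1} to get $V_T=\colim_{S\subsetneq T}V_S$ for $|T|>N$, then induct on $|T|$ and collapse the resulting iterated colimit. The only difference is cosmetic: the paper packages the colimit manipulation as the general poset identity $\colim_{p\in P}F(p)=\colim_{q\in Q}\colim_{g(p)\leq q}F(p)$, while you unfold this as a Grothendieck construction plus an explicit cofinality check (note a harmless notational slip: midway through the zigzag you swap the order of components in your pairs $(S,S')$).
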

We will prove the equivalent statement that for any finite set $T$:
\begin{equation}
\tag{\astT{T}}
\label{eq:astT}
\text{The natural map }\colim_{\substack{S\subseteq T\\\abs{S}\leq N}} V_S\to V_T \text{ is an isomorphism.}
\end{equation}
\begin{proof}[Proof of Theorem~\ref{thm:inductive}]
Under our assumptions, Proposition~\ref{prop:homologygraded} states that $H_a(\tSminus{*}V)\approxeq 0$ for all $a\geq 0$. 
In particular, we can fix some $N\geq 0$ such that $H_0(V)_n=0$ and $H_1(V)_n=0$ for all $n>N$. We will prove that for this $N$ the claim \eqref{eq:astT} holds for all finite sets $T$, by induction on $\abs{T}$. Our base case is $\abs{T}\leq N$. In this case the condition $\abs{S}\leq N$ is vacuous, and the claim \eqref{eq:astT} asserts that the natural map $\colim_{S\subseteq T} V_S\to V_T$ is an isomorphism. This is true for any $V$, since $T$ is terminal in the poset $\{S\subset T\}$. 

Fix a finite set $T$ with $\abs{T}>N$, and assume that  (\astT{U}) holds whenever $\abs{U}<\abs{T}$. 
For any map of posets $g\colon P\to Q$ and any $P$-indexed diagram $F$, it holds that \[\colim_{p\in P} F(p)=\colim_{q\in Q}  \colim_{\substack{p\in P\\g(p)\leq q}} F(p).\] Applying this to the inclusion of $\{S\subset T|\abs{S}\leq N\}$ into $\{U\subsetneq T\}$, we find that 
\begin{equation}
\label{eq:colimcolim}
\colim_{\substack{S\subseteq T\\\abs{S}\leq N}} V_S
=\colim_{U\subsetneq T}\colim_{\substack{S\subseteq U\\\abs{S}\leq N}}V_S.
\end{equation} Applying the inductive assumption $(\astT{U})$ gives $\colim_{\substack{S\subset U\\\abs{S}\leq N}}V_S=V_U$ for each $U\subsetneq T$. Therefore \eqref{eq:colimcolim} simplifies to $\colim_{U\subsetneq T}V_U$.
Since $\abs{T}>N$ we have $H_0(V)_T=H_1(V)_T=0$, so Corollary~\ref{cor:H0H1} states that $\colim_{U\subsetneq T}V_U=V_T$. Summing up, we have
\[\colim_{\substack{S\subseteq T\\\abs{S}\leq N}} V_S
=\colim_{U\subsetneq T}\colim_{\substack{S\subseteq U\\\abs{S}\leq N}}V_S
=\colim_{U\subsetneq T}V_U=V_T.\] This concludes the proof of \eqref{eq:astT}.
\end{proof}

\section{Congruence FI-groups (Proof of Theorem~D)}
\label{sec:congruence}
In this section we will prove Theorem~\ref{thm:congruencefg}, on the homology of the congruence FI-group $\Gamma_\bullet(\p)$.

\para{The congruence FI-group $\Gamma_\bullet(\p)$} Given a commutative ring $R$, let $M(1)=M(1)_{/R}$ denote the FI-module taking a finite set $S$ to the free $R$-module with basis $\{e_s|s\in S\}$, and let $M(1)^*$ denote the FI-module taking a finite set $S$ to $\Hom_R(M(1)_S,R)$. 
Their tensor product is the endomorphism FI-algebra $\End M(1)=M(1)\otimes M(1)^*$, and the invertible endomorphisms form the FI-group $\GL(M(1))$; this definition agrees with the FI-group $\GL_\bullet(R)$ defined by \eqref{eq:GLFIgroup} in the introduction.

\begin{remark}
\label{rem:M1M1dual}
There is an isomorphism of FI-modules from $M(1)$ to $M(1)^*$ which sends $e_s$ to the functional $\lambda_s\colon M(1)_S\to R$ defined by $\lambda_s(e_t)=\delta_{st}$. Nevertheless, we maintain the distinction because the natural actions of $\GL(M(1))$ on $M(1)$ and on $M(1)^*$ are not equivalent. Taking $S=[n]$, we have canonical isomorphisms $M(1)_n\simeq M(1)^*_n\simeq R^n$ and $\GL(M(1))_n\simeq \GL_n(R)$; the action on $M(1)$ is by the standard representation of $\GL_n(R)$ on $R^n$, while the action on $M(1)^*$ is by the dual representation $g\mapsto (g^{-1})^\top$.
\end{remark}

For any ideal $\p\subset R$, the natural reduction map from $R$ to $\FF\coloneq R/\p$ induces maps $M(1)_{/R} \to M(1)_{/\FF}$ and $\GL(M(1)_{/R})\to \GL(M(1)_{/\FF})$.
As in the introduction, the {\em congruence FI-group $\Gamma_\bullet(\p)$} is defined by the short exact sequence of FI-groups:
\[1\to \Gamma_\bullet(\p)\to \GL(M(1)_{/R})\to \GL(M(1)_{/\FF})\]
\begin{proof}[Proof of Theorem~\ref{thm:congruencefg}]
Fix a number field $K$ with ring of integers $\O_K$, and let $\p\subsetneq \O_K$ be a proper ideal. Fix also a Noetherian ring $A$, and consider the FI-module $\HH_m\coloneq \HH_q(\Gamma_\bullet(\p);A)$ over $A$; our goal is to prove that $\HH_m$ is finitely generated.

We work with a more naive version of the complex used by Putman in \cite{P}. Consider $M(1)\times M(1)^*=M(1)_{/\O_K}\times M(1)_{/\O_K}^*$ as an FI-set (ignoring any additive strucure).  Our complex $X_\bullet$ will be an FI-simplicial complex with vertex set contained in $M(1)\times M(1)^*$.   

Consider the FI-simplicial complex $\Delta^{\bullet-1}$ which assigns to any finite set $S$ the full simplicial complex $\Delta^{\bullet-1}(S)$ with vertex set $S$.   Thus $\Delta^{\bullet-1}(n)$ is the standard $(n-1)$-simplex $\Delta^{n-1}$, its FI-endomorphisms act by the standard action of $S_n$ on $\Delta^{n-1}$, and any injective map $S\hookrightarrow T$ of sets induces a simplicial inclusion 
$\Delta^{\bullet-1}(S)\to\Delta^{\bullet-1}(T)$.  See \cite[Example 2.11]{CEF} for more on this FI-simplicial complex.

Let $D_\bullet$ denote the FI-simplex $\Delta^{\bullet-1}$,  considered as embedded in $M(1)\times M(1)^*$ as the full simplex on the elements 
$\{(e_s, \lambda_s)|s\in S\} \subset M(1)_S\times M(1)_S^*$. We define the FI-simplicial complex $X_\bullet$ to be
\[X_\bullet\coloneq \Gamma_\bullet(\p)\cdot D_\bullet.\] In other words, $X_\bullet$ is the simplicial complex with vertex set contained in  $M(1)\times M(1)^*$ consisting of all of those simplices lying in the $\Gamma_\bullet(\p)$-orbit of $D_\bullet$.

No element of $\Gamma_\bullet(\p)$ takes any simplex of $D_\bullet$ to a different simplex of $D_\bullet$, as these simplices are distinguished by their reduction in $M(1)_{/\FF}\times M(1)_{/\FF}^*$, which is preserved by the action of $\Gamma_\bullet(\p)$. (This is where we use that $\p$ is a \emph{proper} ideal of $\O_K$.) Thus $D_\bullet$ is by definition a fundamental domain for the action of $\Gamma_\bullet(\p)$ on $X_\bullet$, and we have a canonical identification
\[X_\bullet/\Gamma_\bullet(\p)\simeq D_\bullet.\]

From such an action we  obtain in the usual way (see \cite[Equation~VII.7.2]{B}) a spectral sequence converging to the equivariant homology $H_*^{\Gamma_\bullet(\p)}(X_\bullet)$. Although our complex $X_\bullet$ differs from the complex $\mathcal{SB}_n(\O_K,\p)$ considered by Putman, he notes in \cite[Lemma~3.2]{P} that $X_\bullet \approxeq \mathcal{SB}_\bullet(\O_K,\p)$ since $\O_K$ satisfies Bass's stable range condition $S_3$. In particular, Putman deduces from Charney \cite[Theorem~3.5]{Charney} that the complex $\mathcal{SB}_n(\O_K,\p)$ is $(\frac{n}{2}-2)$-acyclic \cite[Lemma~3.1]{P}, so we have $\widetilde{H}_m(X_\bullet)\approxeq 0$ for all $m\geq 0$. This implies (see \cite[Proposition~VII.7.3]{B}) that \[H_m^{\Gamma_\bullet(\p)}(X_\bullet)\approxeq H_m(\Gamma_\bullet(\p))=\HH_m;\] in other words, the equivariant homology computed by the spectral sequence is asymptotically identical with the ordinary homology FI-module $\HH_m$ that is our object of study here.

Let us consider the $E^1$ page of this spectral sequence more closely. Since $D_\bullet$ is a fundamental domain for the action, we have (see \cite[Equation~VII.7.7]{B}):
\begin{equation}
\label{eq:E1first}
E^1_{pq}=\bigoplus_{\substack{\sigma \text{ a $p$-simplex}\\\text{of }D_\bullet}}H_q(\Stab_{\Gamma_\bullet(\p)}(\sigma);R)\qquad\implies\  H_m^{\Gamma_\bullet(\p)}(X_\bullet)\approxeq\HH_{p+q}
\end{equation}

Each $p$-simplex $\sigma$ of $D_S$ is the full simplex on $\{(e_u, \lambda_u)|u\in U\} \subset M(1)_S\times M(1)_S^*$ for some $U\subset S$ with $\abs{U}=p+1$.   Let $T=S-U$.  The FI-group structure on $\Gamma_\bullet(\p)$ yields an inclusion $\Gamma_T(\p) \inj \Gamma_S(\p)$, and the stabilizer in $\Gamma_S(\p)$ of the simplex $\sigma_U$ is precisely the subgroup $\Gamma_T(\p)$.   This shows that \[(E^1_{pq})_S=\bigoplus_{\substack{T\subset S\\\abs{T}=\abs{S}-p-1}}H_q(\Gamma_T(\p);R).\] Since a permutation of $U=S-T$ acts on the orientation of the $p$-simplex $\sigma_U$ according to its sign, comparing with Definition~\ref{def:negshift}, we see that we can identify $E^1_{pq}$ with $\tSminus{p-1}(\HH_q)$; more than this, we can identify the $q$th row ($E^1_{\ast,q},d^1)$ with the complex 
$\tSminus{*-1}\HH_q$ 
 from \eqref{eq:shiftcomplex}, excluding the last term $\tSminus{0}\HH_q=\HH_q$.
We have in particular $E^1_{0,m}=\tSminus{1}\HH_m$, and the edge map $E^1_{0,m}\to H_{m}^{\Gamma_\bullet(\p)}(X_\bullet)$ factors as
\begin{equation} \label{eq:abut}
\tSminus{1}(\HH_m)=E^1_{0,m}\twoheadrightarrow E^\infty_{0,m}\hookrightarrow H_{m}^{\Gamma_\bullet(\p)}(X_\bullet)\approxeq \HH_m.
\end{equation}
The composition of these maps is just the boundary map $\tSminus{1}(\HH_m) \ra \HH_m$ appearing in \eqref{eq:shiftcomplex}.

We now prove by induction on $m$ that $\HH_m$ is a finitely-generated FI-module. For the base case, we have $\HH_0=M(0)$, which is finitely generated by definition.

Suppose we know that $\HH_{q}$ is finitely generated for all $q < m$. The cokernel of the map $E^\infty_{0,m}\to H_{m}^{\Gamma_\bullet(\p)}(X_\bullet)$ has a filtration whose graded quotients are isomorphic to $E^\infty_{p,m-p}$ for $1\leq p \leq m$. Since $E^2_{pq}=H_p(\tSminus{*}\HH_q)$ and $R$ is Noetherian, Proposition~\ref{prop:homologygraded} tells us that $E^2_{pq} \approxeq 0$ for all $p\geq 0$ and all $q < m$. Since $E^\infty_{pq}$ is a subquotient of $E^2_{pq}$, it follows that $E^\infty_{pq} \approxeq 0$ for all $p\geq 0$ and all $q < m$. This shows that $\coker(E^\infty_{0,m} \ra H_{m}^{\Gamma_\bullet(\p)}(X_\bullet))\approx 0$.  Via \eqref{eq:abut}, this implies that $\coker(\tSminus{1}(\HH_m)\to \HH_m) \approxeq 0$.
By Lemma~\ref{lem:H0identify} this says that $H_0(\HH_m)\approxeq 0$, which by Lemma~\ref{lem:cokerfg}.1 is equivalent to saying that $\HH_m$ is generated in finite degree.

 The existence of the Borel--Serre compactification \cite{BS} implies that $H_m(\Gamma_n(\p);R)$ is a finitely-generated $R$-module for all $m\geq 0$ and all $n\geq 0$. Therefore by Lemma~\ref{lem:cokerfg}.2, $\HH_m$ is generated in finite degree if and only if $\HH_m$ is finitely generated. This shows that $\HH_m$ is finitely generated, completing the inductive step of the proof.
\end{proof}

\begin{remark}  The homology groups $H_m(\Gamma_n(\p);\Z)$ do not merely carry an action of $S_n$, but of the larger linear group $\SL_n(\FF)$, in which $S_n$ is contained as a subgroup. (This uses the nontrivial result that the mod-$\p$ reduction $\SL_n\O_K\to \SL_n\FF$ is actually surjective.) In keeping with the philosophy of \cite[\S8]{CF} one might ask whether the groups $H_m(\Gamma_n(\p);\Z)$ obey an appropriate notion of ``representation stability" with respect to the action of the family $\{\SL_n\FF\}$.
\end{remark}

\section{Configuration co-FI-spaces (Proof of Theorem~\ref{thm:configurationsfg})}
Let $R$ be a Noetherian ring and let $M$ be a connected orientable manifold of dimension $\geq 2$ with $H^*(M;R)$ finitely generated.
We recall from the introduction that $\Conf(M)$ is the co-FI-space sending a finite set $S$ to the space $\Inj(S,M)$ of injections of $S$ into $M$. Let $M^\bullet$ be the co-FI-space defined by $M^S=\Map(S,M)$.  There is a natural inclusion  $i\colon \Conf(M)\inj M^\bullet$ as co-FI-spaces. 

\begin{lemma}
\label{lemma:cohoMbullet}
Let $M$ be a connected space with the homotopy type of a CW complex with finitely many cells in each dimension.  Then for all $m\geq 0$, the FI-module $H^m(M^\bullet;R)$ is generated in finite degree.
\end{lemma}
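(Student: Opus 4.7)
The plan is to replace $M$ with a suitable CW model and split its cellular cochain complex into an FI-equivariant direct sum whose summands vanish in low cohomological degrees.

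First I would replace $M$ by a homotopy equivalent CW complex $X$ of finite type with a single $0$-cell $v$; this is always possible for a connected CW complex with finitely many cells in each dimension, by collapsing a maximal tree in the $1$-skeleton. Since cohomology is a homotopy invariant, $H^m(M^n;R)=H^m(X^n;R)$, and the product CW structure on $X^n$ gives a natural isomorphism of cochain complexes $C^*(X^n;R)\cong C^*(X;R)^{\otimes n}$. Under this isomorphism an FI-morphism $f\colon T\inj [n]$ induces the map $C^*(X^T;R)\to C^*(X^n;R)$ which inserts the distinguished $0$-cochain $v^*\in C^0(X;R)=R$ at the coordinates outside $f(T)$.

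Writing $D\coloneq C^*(X;R)$, the crucial observation is that $v^*$ is a cocycle (since $X$ has a unique $0$-cell, no $1$-cell has nontrivial boundary at $v$), giving a direct sum decomposition $D=R\oplus D^{\geq 1}$ as subcomplexes. Expanding the tensor power yields an FI-equivariant decomposition of cochain complexes
\[ C^*(X^n;R)\ =\ \bigoplus_{S\subseteq[n]} E_S^{[n]},\qquad E_S^{[n]}\coloneq \bigotimes_{i\in S} D^{\geq 1}\ \otimes\ \bigotimes_{i\notin S} R, \]
where an FI-morphism $f\colon T\inj [n]$ carries the summand $E_U^T$ identically onto $E_{f(U)}^{[n]}$.

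Passing to cohomology gives $H^*(X^n;R)=\bigoplus_{S\subseteq[n]} H^*(E_S^{[n]})$. Since $D^{\geq 1}$ is concentrated in degrees $\geq 1$, the complex $E_S^{[n]}$ is concentrated in degrees $\geq |S|$, so $H^m(E_S^{[n]})=0$ whenever $|S|>m$. Hence every class in $H^m(X^n;R)$ is a finite sum of classes in summands $H^m(E_S^{[n]})$ with $|S|\leq m$, and each such class is the FI-pullback of a class in $H^m(X^T;R)$ for any $T$ with $|T|\leq m$ and $f(T)\supseteq S$. This proves the stronger statement that $H^m(M^\bullet;R)$ is generated in degree $\leq m$. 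The only nontrivial input is the reduction to a CW model with a single $0$-cell, which ensures $C^0(X;R)$ is spanned by a cocycle and thus gives the splitting $D=R\oplus D^{\geq 1}$; beyond that, the argument is a degree count in the tensor decomposition.
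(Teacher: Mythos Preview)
Your proof is correct and follows essentially the same outline as the paper's: replace $M$ by a CW model with a single $0$-cell, identify the cellular (co)chain complex of the product with a tensor power, and use a degree count on the tensor factors to bound the degree of generation by $m$.

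There is one genuine refinement in your argument. The paper shows only that the \emph{cochain} FI-module $\Hom((\CC_*)^{\otimes\bullet},R)^m$ is generated in degree $\leq m$, and then invokes Theorem~\ref{thm:noetherian} to conclude that its subquotient $H^m(M^\bullet;R)$ is finitely generated. You instead observe that, because $X$ has a single $0$-cell, the distinguished $0$-cochain $v^*$ is a \emph{cocycle}, so $D=R\oplus D^{\geq 1}$ splits as a direct sum of \emph{complexes}. This makes the tensor decomposition $\bigoplus_S E_S^{[n]}$ pass to cohomology, giving the bound ``generated in degree $\leq m$'' directly at the level of $H^m$, with no appeal to the Noetherian property. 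Since the lemma only asserts generation in finite degree, your route is slightly more elementary and self-contained; the paper's route has the side benefit of yielding finite generation (not just finite degree) when $R$ is Noetherian, though that stronger conclusion is not needed here.
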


\begin{proof} When $R$ is a field $k$, the lemma can be deduced without difficulty from the K\"{u}nneth theorem and the results of \cite{CEF}, since $H^*(M^\bullet;k)=H^*(M;k)^{\otimes\bullet}$. However for general $R$, the relation between the cohomology of $M^n$ and that of $M$ is more complicated; we handle this by working directly at the level of cochains.

We have assumed that $M$ is homotopy equivalent to a CW complex; since $M$ is connected, we may assume that this CW complex has only a single $0$-cell. Let $\CC_*$ be the corresponding cellular chain complex over $R$; this is a bounded-below chain complex of finitely generated projective $R$-modules with $\CC_0=R$.

We recall from \cite[Definition 2.71]{CEF} the definition of the co-FI-chain complex $\CC_*^{\otimes \bullet}$. By definition, in degree $n$ it is $(\CC_*^{\otimes \bullet})_{[n]}\coloneq\CC_*^{\otimes n}$, which is a bounded-below chain complex of finitely generated projective $R$-modules. An injection $f\colon [n]\hookrightarrow [m]$ induces the map $f^*\colon \CC_*^{\otimes m}\hookrightarrow \CC_*^{\otimes n}$ which on each factor lying in $[m]-f([n])$ is the projection onto $\CC_0=R$, and permutes the remaining  factors according to $f^{-1}$ (with appropriate sign based on the grading).

The Eilenberg--Zilber theorem states that the singular chain complex $C_*(M^n)$ of $M^n$ is quasi-isomorphic to the $n$-fold derived tensor product $C_*(M)^{\tensor^{\mathbb{L}} n}$. Since $\CC_*$ is quasi-isomorphic to $C_*(M)$ we have $C_*(M)^{\tensor^{\mathbb{L}} n}=(\CC_*)^{\tensor^{\mathbb{L}} n}$. But $(\CC_*)^{\tensor^{\mathbb{L}} n}=(\CC_*)^{\tensor n}$, since $\CC_*$ is a complex of projective $R$-modules. Therefore $(\CC_*)^{\otimes n}$ is quasi-isomorphic to $C_*(M^n)$. In other words, since $\CC_*$ consists of projective modules and coincides with $C_*(M)$ in the derived category $D^b(R)$, the co-FI-chain complexes $(\CC_*)^{\tensor \bullet}$ and $C_*(M^\bullet)$ define the same co-FI-object of $D^b(R)$.

In particular, the cohomology $H^*(M^\bullet;R)=\Ext^*(C_*(M^\bullet),R)$ can be computed as the cohomology of the complex $\Hom((\CC_*)^{\tensor \bullet},R)$, which is now an FI-chain complex of finitely generated projective $R$-modules.
Denote the piece of this complex in grading $m$ by $\Hom((\CC_*)^{\tensor \bullet},R)^{m}$:
\[\Hom((\CC_*)^{\tensor n},R)^m=\bigoplus_{m_1+\cdots+m_n=m} \Hom(\CC_{m_1}\otimes\cdots\otimes \CC_{m_n},R)\]  When $n>m$ every such factor must have $m_i=0$ for some $i$, and thus lies in the image of $f_*$ for some $f\colon [n-1]\hookrightarrow [n]$. Therefore the FI-module $\Hom((\CC_*)^{\tensor \bullet},R)^{m}$ is finitely generated in degree $m$.
Since $H^m(M^\bullet)$ is a subquotient of this finitely generated FI-module, it is finitely generated by Theorem~\ref{thm:noetherian}.
\end{proof}

\begin{proof}[Proof of Theorem~\ref{thm:configurationsfg}]
We consider the inclusion of co-FI-spaces $i\colon \Conf(M)\inj M^\bullet$, and the resulting  Leray spectral sequence  of FI-modules over $R$: \[E_2^{p,q}=H^p(M^\bullet;R^qi_*(\underline{R}))\implies H^{p+q}(\Conf(M);R)\] Our first goal is to verify that $E_2^{p,q}$ is finitely generated as an FI-module for each $p,q\geq 0$.   
Over $\Q$ this argument was given in the proof of \cite[Theorem~4.1]{CEF}, and the same outline works here; the main difference over a general Noetherian ring $R$ was in Lemma~\ref{lemma:cohoMbullet}.

Totaro describes the $E_2$ page of this spectral sequence \cite[Theorem 1]{To}, and in particular he shows that $E_2^{*,*}$ is generated by the subalgebras $E_2^{*,0}$ and $E_2^{0,*}$ (see the proof of \cite[Theorem~4.1]{CEF} for more details). The former is isomorphic to $H^*(M^\bullet;R)$, which is finitely generated by Lemma~\ref{lemma:cohoMbullet}.

Totaro proves that the subalgebra $E_2^{0,*}$ is generated by $E_2^{0,d-1}$, which is generated in degree 2 (by the element ``$G_{12}$'', in Totaro's notation). Since this is a first-quadrant spectral sequence, only finitely many terms along each axis can multiply to any given entry. Each entry $E_2^{p,q}$ is thus the quotient of a finite direct sum of finite tensor products of finitely-generated FI-modules. By the basic proposition \cite[Proposition~2.61]{CEF}, such a finite tensor product is itself finitely generated. It follows that $E_2^{p,q}$ is finitely generated as well.

Since $E_\infty^{p,q}$ is a subquotient of $E_2^{p,q}$, Theorem~\ref{thm:noetherian} implies that $E_{\infty}^{p,q}$ is finitely generated for each $p\geq 0$ and $q\geq 0$. The cohomology FI-module $H^m(\Conf(M);R)$ has a finite-length filtration whose graded quotients are of this form, so by \cite[Proposition~2.17]{CEF} the FI-module $H^m(\Conf(M);R)$ is itself finitely generated, as desired.
\end{proof}

\section{Coinvariant co-FI-algebras (Proof of Theorem~\ref{thm:coinvariantsfg})}
Fix a commutative Noetherian ring $A$, and fix an integer $r\geq 1$. We recall from the introduction that $A[\XX^{(r)}]$ is the $\Z_{\geq 0}^r$-graded co-FI-algebra which sends a finite set $S$ to the free commutative $A$-algebra on generators indexed by $[r]\times S$. Its quotient by the ideal of $\Aut(S)$-invariant polynomials with zero constant term defines the $\Z_{\geq 0}^r$-graded co-FI-algebra $R^{(r)}$, the \emph{$r$-diagonal coinvariant co-FI-algebra}. 

\begin{proof}[Proof of Theorem~\ref{thm:coinvariantsfg}]
The co-FI-algebra $A[\XX^{(r)}]$ is the free commutative $A$-algebra generated by the co-FI-module $M(1)^\vee$, so the FI-algebra $A[\XX^{(r)}]^\vee$ is the free commutative $A$-algebra generated by the FI-module $M(1)$. In particular, if $J=(j_1,\ldots,j_r)$, the graded piece $A[\XX^{(r)}]^\vee_J$ is isomorphic to $\Sym^{j_1}M(1)\otimes \cdots \otimes \Sym^{j_r}M(1)$. This is a quotient of $M(1)^{\tensor \abs{J}}$, which is finitely generated by \cite[Proposition~2.61]{CEF}, so $A[\XX^{(r)}]^\vee_J$ is a finitely generated FI-module over $A$.

Since  $R^{(r)}_J$ is a quotient of $A[\XX^{(r)}]_J$, its dual $(R^{(r)}_J)^\vee$ naturally embeds as a sub-FI-module of $A[\XX^{(r)}]_J^\vee$.  Theorem~\ref{thm:noetherian} therefore implies that  $(R^{(r)}_J)^\vee$ is finitely generated as desired.
\end{proof}

\small
\noindent
\begin{tabular}{lll}
Dept. of Mathematics\ \ \ \ \ \ \ \ \ \ \ \ \ \ \ &
Dept. of Mathematics\ \ \ \ \ \ \ \ \ \ \ \ \ \ \ &
Dept. of Mathematics\\
Stanford University&
University of Wisconsin&
University of Chicago\\
450 Serra Mall&
480 Lincoln Drive&
5734 S. University Ave.\\
Stanford, CA 94305&
Madison, WI 53706&
Chicago, IL 60637\\
\myemail{church@math.stanford.edu}&
\myemail{ellenber@math.wisc.edu}&
\myemail{farb@math.uchicago.edu}\\
&\myemail{nagpal@math.wisc.edu}&
\end{tabular}

%
%
%
%
%
%

\begin{thebibliography}{ABCDEF}
\small

\bibitem[Bi]{Bi}
J. Birman, {\em Braids, Links and Mapping Class Groups}, Annals of Math. Studies 82, Princeton Univ. Press, 1974.

\bibitem[BS]{BS} A. Borel and J.-P. Serre, Corners and arithmetic groups, \emph{Comment. Math. Helv.} 48 (1973), 436--491.

\bibitem[Br]{B} K. Brown, \emph{Cohomology of groups}, corrected reprint of the 1982 original,
Graduate Texts in Mathematics 87, Springer-Verlag, New York,  1994.

\bibitem[Ca]{Ca} F. Calegari, The stable homology of congruence subgroups, \arXiv{1311.5190}, Nov 2013.

\bibitem[Cha]{Charney} R. Charney, On the problem of homology stability for congruence subgroups, \emph{Comm. Algebra} 12  (1984),  no.\ 17-18, 2081--2123.

\bibitem[CE]{CE} T. Church and J.\,S. Ellenberg, Homological properties of FI-modules and stability, in preparation.

\bibitem[CEF]{CEF} T. Church, J.\,S. Ellenberg and B. Farb, FI-modules: a new approach to stability for $S_n$-representations, \arXivV{1204.4533}{2}, revised June 2012.

\bibitem[CF]{CF} T. Church and B. Farb, Representation theory and homological stability, \emph{Adv. Math.} (2013), 250--314. Available at \arXiv{1008.1368}.

\bibitem[CP]{CP} T. Church and A. Putman, Generating the Johnson filtration, \arXivV{1311.7150}{1}, November 2013.

\bibitem[JR]{RJR}
R. Jimenez Rolland, On the cohomology of moduli spaces of curves and pure mapping class groups, \emph{J. Homotopy Relat. Struct.} (2013). Available at \arXiv{1207.6828}.

\bibitem[L]{Lueck} W. L\"{u}ck, \emph{Transformation Groups and Algebraic $K$-Theory}, Lecture Notes in Mathematics 1408, Mathematica Gottingensis,
Springer-Verlag, Berlin,  1989.

\bibitem[P]{P}
A. Putman, Stability in the homology of congruence subgroups, \arXivV{1201.4876}{4}, revised August 2012.

\bibitem[R]{Riehl}
E. Riehl, \emph{Categorical Homotopy Theory},
New Mathematical Monographs, to appear. Cambridge University Press, Cambridge,  2014. Available at:\\ \url{http://www.math.harvard.edu/~eriehl/cathtpy.pdf}

\bibitem[SS]{SS}
S. Sam and A. Snowden, $\GL$-equivariant modules over polynomial rings in infinitely many variables, \arXivV{1206.2233}{2}, revised October 2013.

\bibitem[Sn]{Snowden} A. Snowden, Syzygies of Segre embeddings and $\Delta$-modules, \emph{Duke Math J.} 162 (2013)  2, 225--277. Available at \arXiv{1006.5248}.

\bibitem[T]{To}
B. Totaro, Configuration spaces of algebraic varieties, \emph{Topology} 35 (1996), no. 4, 
1057--1067. 

\end{thebibliography}
\end{document}